\tikzset{snakeit/.style={decorate, decoration={snake, amplitude=.2mm,segment length=1mm}}}
\tikzset{ext/.style={circle, draw,inner sep=1pt}, int/.style={circle,draw,fill,inner sep=2pt},nil/.style={inner sep=1pt}}
\tikzset{cy/.style={circle,draw,fill,inner sep=2pt},scy/.style={circle,draw,inner sep=2pt},scyx/.style={draw,cross out,inner sep=2pt},scyt/.style={draw,regular polygon,regular polygon sides=3,inner sep=0.95pt}}
\tikzset{exte/.style={circle, draw,inner sep=3pt},inte/.style={circle,draw,fill,inner sep=3pt}}
\tikzset{diagram/.style={matrix of math nodes, row sep=3em, column sep=2.5em, text height=1.5ex, text depth=0.25ex}}
\tikzset{diagram2/.style={matrix of math nodes, row sep=0.5em, column sep=0.5em, text height=1.5ex, text depth=0.25ex}}
\tikzset{rowcolsep/.style={column sep=.2cm, row sep=.1cm}}
\tikzset{cross/.style={cross out,draw}}
\tikzset{every loop/.style={draw}}
\tikzset{
  crossed/.style={
    decoration={markings,mark=at position .5 with {\arrow{|}}},
    postaction={decorate},
    shorten >=0.4pt}}
\tikzset{every picture/.style={baseline=-.65ex} }
\theoremstyle{plain}
  \newtheorem{thm}{Theorem}
  \newtheorem{prop}[thm]{Proposition}
  \newtheorem{lemdef}[thm]{Lemma/Definition}
  \newtheorem{cor}[thm]{Corollary}
  \newtheorem{lemma}[thm]{Lemma}
\theoremstyle{definition}
\newcommand{\alg}[1]{\mathfrak{{#1}}}
\newcommand{\Hom}{\mathrm{Hom}}
\newcommand{\Z}{{\mathbb{Z}}}
\newcommand{\Graphs}{{\mathsf{Graphs}}}
\newcommand{\mF}{\mathcal{F}}
\newcommand{\FreeLie}{\mathrm{FreeLie}}
\newcommand{\FreeCom}{S}
\newcommand{\Com}{\mathsf{Com}}
\newcommand{\bpm}{\begin{pmatrix}}
\newcommand{\epm}{\end{pmatrix}}
\newcommand{\Aut}{\mathrm{Aut}}
\newcommand{\GC}{\mathsf{GC}}
\newcommand{\G}{\mathsf{G}}
\DeclareMathOperator{\gr}{gr}
\newcommand{\gra}{\mathit{gra}}
\newcommand{\Q}{\mathbb{Q}}
\newcommand{\fg}{\mathfrak{g}}
\renewcommand{\Bar}{\mathtt{B}}
\newcommand{\osp}{\mathfrak{osp}}
\newcommand{\ft}{\mathfrak{t}}
\DeclareMathOperator{\tru}{\mathrm{tr}}
\newcommand{\fG}{\mathsf{fG}}
\newcommand{\GCex}{\GC^\ex}
\renewcommand{\ex}{{\mathrm{ex}}}
\newcommand{\cut}{\mathrm{cut}}
\newcommand{\GL}{\mathrm{GL}}
\author{Simon Brun}
\address{Department of Mathematics\\ ETH Zurich\\ R\"amistrasse 101 \\ 8092 Zurich, Switzerland}
\email{simonbrun@math.ethz.ch}
\author{Thomas Willwacher}
\address{Department of Mathematics\\ ETH Zurich\\ R\"amistrasse 101 \\ 8092 Zurich, Switzerland}
\email{thomas.willwacher@math.ethz.ch}
\begin{document}
\title{Stable cohomology of graph complexes in odd dimensions}

\thanks{
  S.B. and T.W. have been supported by the ERC starting grant 678156 GRAPHCPX, and the NCCR SwissMAP, funded by the Swiss National Science Foundation.
}

\begin{abstract}
We study graph complexes related to configuration spaces and diffeomorphism groups of highly connected manifolds of odd dimension. In particular we compute the cohomology in the "high genus" limit.
This paper is a continuation of previous work by Felder, Naef and the second author in which the even dimensional case is studied \cite{FNW}.
\end{abstract}

\maketitle

\tableofcontents

\section{Introduction}
Let $m$ be a fixed positive integer.
In this paper we study the graph complex $\GC_{(g),1}$ that arises in connection to the configuration spaces and diffeomorphism groups of the $2m+1$-dimensional manifold $W_{g,1}$:
\[
   W_{g,1} = W_g \setminus D^{2m+1} \quad \text{with} \quad W_g = \#^g S^{m}\times S^{m+1}.
\]

Elements of $\GC_{(g),1}$ are $\Q$-linear series of isomorphism classes of connected, at least trivalent graphs whose vertices are decorated by elements of the reduced homology $\bar H_\bullet(W_{g,1})$:
\[
\begin{tikzpicture}[yshift=-.5cm]
\node[int,label=180:{$\gamma$}] (v1) at (0,0) {};
\node[int] (v2) at (1,0) {};
\node[int,label=90:{$\alpha\beta$}] (v3) at (0,1) {};
\node[int] (v4) at (1,1) {};
\draw (v1) edge (v3) edge (v2) edge (v4) 
    (v4) edge (v2) edge (v3) 
    (v3) edge (v2);
\end{tikzpicture}
\quad\quad \text{with $\alpha,\beta,\gamma \in H_m(W_{g,1})$}
\]

Note that the graph complex $\GC_{(g),1}$ depends on the manifold dimension $n=2m+1$ which we will consider implicit in the notation. Especially note that this complex differs from the one studied in \cite{FNW}, with identical notation. Here we consider the case where the manifold dimension $n$ is odd compared to the even dimensional case studied in \cite{FNW}. 

The differential on this complex has two terms, $\delta=\delta_{split}+\delta_{glue}$. The piece $\delta_{split}$ is defined by summing over vertices, and splitting the vertex:
  \begin{align}\label{equ:deltasplit}
    \delta_{split} \Gamma &= \sum_{v \text{ vertex} }  \pm 
    \Gamma\text{ split $v$} 
    &
    \begin{tikzpicture}[baseline=-.65ex]
    \node[int] (v) at (0,0) {};
    \draw (v) edge +(-.3,-.3)  edge +(-.3,0) edge +(-.3,.3) edge +(.3,-.3)  edge +(.3,0) edge +(.3,.3);
    \end{tikzpicture}
    &\mapsto
    \sum
    \begin{tikzpicture}[baseline=-.65ex]
    \node[int] (v) at (0,0) {};
    \node[int] (w) at (0.5,0) {};
    \draw (v) edge (w) (v) edge +(-.3,-.3)  edge +(-.3,0) edge +(-.3,.3)
     (w) edge +(.3,-.3)  edge +(.3,0) edge +(.3,.3);
    \end{tikzpicture}
  \end{align}
The piece $\delta_{glue}\Gamma$ is defined on a graph $\Gamma$ by summing over all pairs $(\alpha,\beta)$ of $\bar H_\bullet(W_{g,1})$-decorations in the graph $\Gamma$, replacing the pair of decorations by an edge, and multiplying the graph with the numeric prefactor $\langle \alpha,\beta\rangle$, using the canonical skew-symmetric pairing $\langle -,-\rangle: \bar H_\bullet(W_{g,1})\times \bar H_\bullet(W_{g,1}) \to \Q$.
\[
	\delta_{glue} \colon \, 
\begin{tikzpicture}
	\node[int,label=90:{$\alpha$}] (v) at (-.5,1) {};
	\node[int,label=90:{$\beta$}] (w) at (.5,1) {};	
	\node[draw, ellipse, minimum width=1.3cm] at (0,0) (c) {$\cdots$};
	\draw (v) edge (c.west) edge (c) 
	(w) edge (c) edge (c.east);
\end{tikzpicture}
\,
\mapsto 
\,
\langle\alpha,\beta \rangle
\,
\begin{tikzpicture}
	\node[int] (v) at (-.5,1) {};
	\node[int] (w) at (.5,1) {};	
	\node[draw, ellipse, minimum width=1.3cm] at (0,0) (c) {$\cdots$};
	\draw (v) edge (c.west) edge (c) 
	(w) edge (c) edge (c.east)
	(v) edge (w);
\end{tikzpicture}
\]

We refer to section \ref{sec:GCs} below for more precise definitions, including signs, prefactors and degrees. We note that this complex depends on the chosen integer $m$, although this dependence is kept implicit in the notation.

The complex is in fact a dg Lie algebra, with the Lie brackets defined similarly to $\delta_{glue}$ above, just operating on a pair of decorations on two distinct graphs. 

The above graph complex $\GC_{(g),1}$ carries a natural grading  by \emph{weight}, with the weight of a graph with $e$ edges, $v$ vertices and $D$ decorations in $\bar H_\bullet(W_{g,1})$ defined to be the number
\[
W = 2(e-v) + D.
\]
This positive integer valued quantity is preserved by the differential and the Lie bracket. In particular our graph complex splits into a direct product of finite dimensional subcomplexes according to weight.
Furthermore, the differential and dg Lie structure also preserve the difference
\[
H := \text{\#decorations in $H_m\left(W_{g,1}\right)$} - \text{\#decorations in $H_{m+1}\left(W_{g,1}\right)$}\in \Z,
\]
that we shall call \emph{imbalance}.
Together the numbers $(W,H)$ equip our graph complex with a $\Z\times \Z$-grading.
We shall denote the graded piece of the complex or cohomology of given bidegree $(W,H)\in \Z\times \Z$ by the prefix $\gr^{(W,H)} (\cdots)$. We also denote the part of weight $W$ (and any imbalance) by $\gr^W(\cdots)$.

The first part of this paper studies the cohomology of our dg Lie algebra $\GC_{(g),1}$ for large $g$. To this end we have the following vanishing result.

\begin{thm}\label{thm:main cohom GC vanishing} 
  Let 
  $k_{crit}^L(W,H):=-\frac W2 (2m-1)+\frac H2$.
\begin{enumerate}[(i)]
  \item For all $g\geq 0$, $W\geq 1$, $H\in \Z$ and $k<k_{crit}^L(W,H)$ we have that 
  \[
    \gr^{(W,H)} H^{k}\left(\GC_{(g),1}\right)
    =0.
  \] 
  \item For all $W\geq 1$, $g\geq W+2$, $H\in \Z$ and $k>k_{crit}^L(W,H)$ we have that
  \[
    \gr^{(W,H)} H^{k}\left(\GC_{(g),1}\right) = 0.
  \] 
\end{enumerate}
\end{thm}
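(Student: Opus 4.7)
The starting point is a direct computation of the cohomological degree of any graph in $\GC_{(g),1}$ in terms of its combinatorial data. With the natural grading conventions---each vertex contributing degree $n-2$, each edge degree $3-n$, and a decoration in $H_p(W_{g,1})$ contributing degree $1-p$---both $\delta_{split}$ and $\delta_{glue}$ are easily verified to raise degree by $1$ (using that the pairing $\langle-,-\rangle$ is of degree $0$). For a graph $\Gamma$ with $v$ vertices, $e$ edges, and $D_m$, $D_{m+1}$ decorations in $H_m(W_{g,1})$, $H_{m+1}(W_{g,1})$ respectively, the cohomological degree is
\[
k(\Gamma) = (n-2)v + (3-n)e + (1-m)D_m - m D_{m+1},
\]
and substituting $W = 2(e-v) + D_m + D_{m+1}$, $H = D_m - D_{m+1}$ yields the clean identity
\[
k(\Gamma) = k_{crit}^L(W, H) + e.
\]
Hence $\gr^{(W,H)} \GC_{(g),1}$ is supported in cohomological degrees $\geq k_{crit}^L(W,H)$, and Part~(i) follows immediately: no graphs at all contribute below the critical line.

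For Part~(ii) the real content is the acyclicity of the subcomplex of graphs with $e\geq 1$ edges at fixed $(W,H)$, in the stable range $g\geq W+2$. The plan is to identify $\GC_{(g),1}$ (or rather its image under a natural map to $\GL_g$-invariants of a larger complex) with a Chevalley-Eilenberg-type cochain complex of the dg Lie algebra $\icggeis$ built from $\wgeis$, following the pattern of the even dimensional treatment in \cite{FNW}. I then filter by the number of decorations $D$: since $\delta_{split}$ preserves $D$ while $\delta_{glue}$ decreases $D$ by $2$, the filtration $F_p = \{\Gamma : D(\Gamma) \leq p\}$ is compatible with $\delta$, and the $E_0$-page of the resulting spectral sequence reduces to $(\GC_{(g),1},\delta_{split})$, the graph complex with only the vertex-splitting differential. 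This is tractable as the Chevalley-Eilenberg complex of $\icggeis$ without the Maurer-Cartan twist coming from the pairing, whereas $\delta_{glue}$ re-enters on higher pages as the perturbation by this Maurer-Cartan element.

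The collapse of the spectral sequence and the stable vanishing of cohomology above $k_{crit}^L(W,H)$ then follow from a classical invariant-theoretic argument for the $\GL_g$-action on $\bar H_\bullet(W_{g,1})$: by the first fundamental theorem of $\GL_g$-invariant theory, the weight-$W$ invariants are spanned by iterated contractions with the canonical skew-symmetric pairing, and these stabilize once $g\geq W+2$, which is precisely the hypothesis of the theorem. The main obstacle is the faithful transportation of the even-dimensional argument of \cite{FNW} to the present setting: there the pairing is symmetric and the relevant group is $\Sp_{2g}$, whereas here the pairing is skew-symmetric between the two distinct graded pieces $H_m(W_{g,1})$ and $H_{m+1}(W_{g,1})$ and the relevant group is $\GL_g$, and one must verify at each step that the stable computation is compatible with the precise degree formula above.
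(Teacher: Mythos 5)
Your reduction of part (i) to a support statement for the chain complex does not work with the grading for which the theorem is stated. Your degree assignment (vertex $n-2$, edge $3-n$, decoration in $H_p$ of degree $1-p$) is indeed compatible with the differential having degree $+1$, but it differs from the paper's normalization by the weight-dependent shift $1-W$: with the conventions of Section~\ref{sec:GCs} (degree $2me-(2m+1)v+ma+(m+1)b$ on $\fG_{(g),1}$, dualized and shifted by one to obtain $\GC_{(g),1}$), a connected graph with $e$ edges sits in degree $k_{crit}^L(W,H)+e+1-W$, not $k_{crit}^L(W,H)+e$. In particular a single vertex carrying $W+2$ decorations lies in degree $k_{crit}^L(W,H)+1-W<k_{crit}^L(W,H)$ as soon as $W\geq 2$, so the complex is \emph{not} supported above the critical line and part (i) is a genuine cohomological vanishing statement. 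You can also see that your normalization cannot be the intended one: under it the edgeless graphs would sit exactly at $k_{crit}^L$, so part (ii) would force all stable cohomology to be represented by edgeless graphs; but already in weight $2$ the class $V_{(2,2,0,\dots,0)}$ in imbalance $\pm 4$ is carried by the one-edge graphs of type $D$ of Section~\ref{sec:representations}, not by the single-vertex graphs of type $C$. The paper's proof of part (i) instead identifies $\GC_{(g),1}$ with (coinvariants built from) the hairy complexes $\mathsf{ppICG}_n(r)$ and invokes the \v Severa--Willwacher computation $H(\mathsf{ICG}_n)\cong \alg{t}_n$ (Theorem \ref{thm:SW} and Corollary \ref{lemma:lower bound ppicg}) to bound the \emph{cohomology} from below; this nontrivial input has no counterpart in your argument.

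For part (ii) you have correctly identified the role of the fundamental theorem of invariant theory for $\GL_g$ and the origin of the bound $g\geq W+2$ (namely $D\leq W+2$ for connected graphs), but the sketch stops short of a proof. Filtering by the number of decorations reduces the first page of your spectral sequence to $(\GC_{(g),1},\delta_{split})$, whose cohomology is hairy graph cohomology and is \emph{not} concentrated in a single degree, so the vanishing above $k_{crit}^L$ cannot be read off that page; one must control all higher differentials, and ``the collapse follows from invariant theory'' is not an argument. The mechanism the paper actually uses is different: after tensoring the graded dual $\G_{(g),1}$ with $V_g^{\otimes M}\otimes(V_g^*)^{\otimes N}$, taking $\GL_g$-invariants and stabilizing (Lemma/Definition \ref{lemdef:Ginfty} together with Lemmas \ref{lem:G_vanishing} and \ref{lem:rep_gl_g}), the resulting complex $G^{\infty}_{M,N}$ becomes, after the $\oplus/\ominus$ change of basis and a filtration by the number of internal vertices, a sum of the two-colored edge complexes $C_\Gamma$ of B\"okstedt--Minuz, whose cohomology is concentrated on solid trees (Lemma \ref{lem:CGamma}); the tree condition $e=v-1$ combined with trivalence then yields the degree bound $k\geq -k^L_{crit}(W,H)$ on the dual side. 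An argument of this kind, or a substitute for it, is the actual content of part (ii) and is missing from your proposal.
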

In other words, for $g\geq W+2$, the cohomology of the weight $W$ and imbalance $H$ part of $\GC_{(g),1}$ becomes concentrated in the \emph{critical degree} $k_{crit}^L(W,H)$.

A similar result can be obtained for the Chevalley-Eilenberg cohomology $H_{CE}(\GC_{(g),1})$ of the Lie algebra.
We define the Chevalley-Eilenberg complex of $\GC_{(g),1}$ as the cobar construction of the graded dual 
\[
  C_{CE}\left(\GC_{(g),1}\right) = \Bar^c \left(\left(\GC_{(g),1}\right)^c\right).
\]
Then $C_{CE}(\GC_{(g),1})$ is a differential graded commutative algebra. It inherits the $\Z\times Z$-grading by weight and imbalance from $\GC_{(g),1}$.
We denote by $\gr^{(W,H)} H_{CE}(\GC_{(g),1})$ the weight $W$ and imbalance $H$ piece of the Chevalley-Eilenberg cohomology.

\begin{thm}\label{thm:main CE all}
  Let $k_{crit}^C(W,H):= \frac W2 (2m+1) -\frac H 2$.
For any $g\geq 3W$, $H\in \Z$ and $k\neq k_{crit}^C(W,H)$ we have that 
\[
  \gr^{(W,H)} H_{CE}^k\left(\GC_{(g),1}\right) = 0.
\]
\end{thm}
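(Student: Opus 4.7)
My plan is to compute $H_{CE}(\GC_{(g),1})$ via a spectral sequence on $C_{CE}(\GC_{(g),1}) = \Sym((\GC_{(g),1})^c[-1])$ associated to the $\Sym$-filtration, identify the $E_1$-page with $C_{CE}(\fg)$ for $\fg := H^*(\GC_{(g),1})$ using Theorem \ref{thm:main cohom GC vanishing}, force collapse at $E_2$ from a degree obstruction, and finally reduce to a top-concentration claim for $H_{CE}(\fg)$.

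Filter $C_{CE}(\GC_{(g),1})$ by $F^p = \Sym^{\geq p}$. The CE differential splits as $d_1 + d_2$, where $d_1$ is induced by $\delta$ on $\GC_{(g),1}$ (preserving $F^p$) and $d_2$ is dual to the Lie bracket (strictly increasing $p$). Every weight $W_i$ occurring as a tensor factor of the weight-$(W,H)$ piece satisfies $W_i \leq W \leq g-2$ under the assumption $g \geq 3W \geq W+2$, so Theorem \ref{thm:main cohom GC vanishing}(ii) applies on the $E_0$-page and $E_1 = C_{CE}(\fg)$.

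Next, a degree count forces collapse at $E_2$. A product in $\Sym^N(\fg^c[-1])$ of classes with weights $(W_i, H_i)$ summing to $(W, H)$ has total cohomological degree
\[
n \;=\; \sum_{i=1}^{N}\bigl(1 - k_{crit}^L(W_i, H_i)\bigr) \;=\; N + \tfrac{(2m-1)W - H}{2}.
\]
Hence, in the filtration bigrading $(p, q) = (N, n-N)$, the weight-$(W,H)$ part of $E_1$ is concentrated at the single value $q = \tfrac{(2m-1)W - H}{2}$, independent of $N$. Since any differential $d_r \colon (p,q)\mapsto(p+r, q-r+1)$ would change $q$ whenever $r \geq 2$, all such differentials vanish and the sequence collapses. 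The critical degree $k_{crit}^C(W,H) = W + \tfrac{(2m-1)W - H}{2}$ corresponds precisely to $p = W$, i.e., the top symmetric power $\Sym^W(\fg_1^c)$, with all $N=W$ tensor factors of weight one. So the theorem reduces to showing that $\gr^{(W,H)} H_{CE}(\fg)$ is concentrated in $\Sym^W$ for $g \geq 3W$.

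The main obstacle is this last, Koszul-type step. My approach is to reinterpret $C_{CE}(\GC_{(g),1})$ as a complex of \emph{possibly disconnected} decorated graphs, with the CDGA product realised by disjoint union, and then to rerun the vanishing strategy behind Theorem \ref{thm:main cohom GC vanishing} in this enlarged setting. The strengthened stable range $g \geq 3W$ (versus $g \geq W+2$) accommodates the fact that a weight-$W$ disconnected configuration may distribute up to $\sim 3W$ invariant-theoretic slots (vertices, edges, and decorations combined) across its connected components, requiring a correspondingly wider stable range for the graded symplectic invariant theory on $\bar H_\bullet(W_{g,1})$ to apply uniformly. The bulk of the technical work lies in carrying out this enlarged vanishing computation while tracking the $\Sigma_N$-symmetrisations and signs produced by the disjoint-union product, and verifying that the bound $3W$ is indeed sufficient across every multi-component configuration simultaneously.
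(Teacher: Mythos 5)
Your spectral-sequence reduction is sound as far as it goes: filtering $C_{CE}(\GC_{(g),1})$ by symmetric power, invoking Theorem \ref{thm:main cohom GC vanishing} on the associated graded (legitimate since each tensor factor has weight $W_i\le W\le g-2$), and the degree count showing that the weight-$(W,H)$ part of $E_1$ sits at the single complementary degree $q=\frac{(2m-1)W-H}{2}$, so that the sequence collapses and the only possible cohomological degrees are $N+\frac{(2m-1)W-H}{2}$ with $1\le N\le W$, the critical degree corresponding to $N=W$. But this only reduces the theorem to its hard part; it does not prove it. You still must show that $\gr^{(W,H)}H_{CE}(\GC_{(g),1})$ is concentrated in the top symmetric power $N=W$, i.e.\ the Koszul-type concentration, and for this you offer only a plan (``rerun the vanishing strategy behind Theorem \ref{thm:main cohom GC vanishing} for disconnected graphs'') with no mechanism. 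Taken literally that plan is insufficient: the argument behind part (ii) of Theorem \ref{thm:main cohom GC vanishing} (the complexes $G^\infty_{M,N}$ together with Lemma \ref{lem:CGamma}) produces a \emph{one-sided} degree bound, whereas here you need concentration in a \emph{single} degree --- your collapsed spectral sequence still leaves $W$ candidate degrees alive.

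The paper's proof is direct and does not pass through Theorem \ref{thm:main cohom GC vanishing} at all. Its essential steps, all missing from your proposal, are: (1) stabilization of $F^g_{M,N}=\left(C_{CE}(\GC_{(g),1})\otimes V_g^{\otimes M}\otimes (V_g^*)^{\otimes N}\right)^{\GL_g}$ for $2g\ge 3W+M+N$ via the fundamental theorems of invariant theory (Lemma/Definition \ref{lemdef:Finfty}), combined with Lemma \ref{lem:rep_gl_g} to descend vanishing from the invariants back to the complex itself (Lemma \ref{lem:F and CE vanishing}) --- note the bound $g\ge 3W$ comes from the trivalence estimate $D\le 3W$ on the number of decorations, not from ``distributing slots across components''; (2) the combinatorial identification of $F^\infty_{M,N}$ as a two-colored graph complex and the change of basis on dashed edges ($\oplus$/$\ominus$); and (3) an explicit contracting homotopy, inverting ``replace a solid edge by a $\ominus$-edge'', which retracts $\tilde F^\infty_{M,N}$ onto the subcomplex $P_{M,N}$ of graphs with no solid and no $\ominus$-edges (Lemma \ref{lem:quasi-iso}); that subcomplex lies entirely in degree $k_{crit}^C(W,H)$ by a direct count (Lemma \ref{lem:Finfty vanishing}). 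Until you supply an argument of this strength --- one yielding concentration in a single degree rather than a bound --- the proof is incomplete.
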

Finally, we compute the non-vanishing cohomologies in the critical degrees for high genera. 
This can be achieved by using standard results of Koszul duality theory.
First let us define the graded Lie algebra
\[
\ft_{(g),1} = \FreeLie\left(\gr^1H\left(\GC_{(g),1}\right)\right) / \langle R_{(g),1} \rangle 
\]
generated by the weight 1 part of  $H(\GC_{(g),1})$, with the quadratic relations $R_{(g),1}$ defined as 
\[
  R_{(g),1} \cong \gr^2 H_{CE}\left(\GC_{(g),1}\right)^*.
\]
Furthermore, we also define the graded commutative algebra
\[
  A_{(g),1} = \FreeCom\left(\gr^1H_{CE}\left(\GC_{(g),1}\right)\right) / \langle S_{(g),1} \rangle 
\]
generated by the weight 1 part of  $H_{CE}\left(\GC_{(g),1}\right)$, with the quadratic relations 
\[
  S_{(g),1} \cong \gr^2 H\left(\GC_{(g),1}\right)^*.
\]
From (more or less) standard results in Koszul duality theory, see Proposition \ref{prop:cohom} below, we can then deduce the following theorem.
\begin{thm}\label{thm:main Koszul}
There is a morphism of graded Lie algebras
\[
  \ft_{(g),1} \to H(\GC_{(g),1})
\]
and a morphism of dg commutative algebras
\[
  A_{(g),1} \to H_{CE}\left(\GC_{(g),1}\right)
\]
that respect the gradings by weight and imbalance, and that are isomorphisms on the parts of weight $W\leq \frac g3$.

Furthermore, $\ft_{(g),1}$ and $A_{(g),1}$ form a Koszul pair in the same weight range. 
\end{thm}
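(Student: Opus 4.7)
The plan is to derive Theorem \ref{thm:main Koszul} by applying the general Koszul-duality statement of Proposition \ref{prop:cohom}, whose hypotheses I verify using the vanishing results of Theorems \ref{thm:main cohom GC vanishing} and \ref{thm:main CE all}.

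First I would collect the numerical input. A direct computation gives
\[
  k_{crit}^L(W,H) + k_{crit}^C(W,H) = -\tfrac{W}{2}(2m-1) + \tfrac{H}{2} + \tfrac{W}{2}(2m+1) - \tfrac{H}{2} = W,
\]
so in every bidegree $(W,H)$ the critical degrees of $\GC_{(g),1}$ and of $C_{CE}(\GC_{(g),1})$ add up to the weight $W$, which is the numerology expected from a quadratic-Koszul pair in weight grading. For $g \geq 3W$ both earlier theorems apply simultaneously: Theorem \ref{thm:main cohom GC vanishing} concentrates $\gr^{(W,H)}H^k(\GC_{(g),1})$ in $k_{crit}^L(W,H)$, and Theorem \ref{thm:main CE all} concentrates $\gr^{(W,H)}H_{CE}^k(\GC_{(g),1})$ in $k_{crit}^C(W,H)$. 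This is the concentration hypothesis needed to invoke abstract Koszul duality.

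Second, I would construct the comparison morphisms. The inclusion $\gr^1H(\GC_{(g),1}) \hookrightarrow H(\GC_{(g),1})$ of a graded Lie algebra extends by the universal property of $\FreeLie$ to a graded Lie algebra map $\FreeLie(\gr^1H(\GC_{(g),1})) \to H(\GC_{(g),1})$, respecting weight, imbalance and cohomological degree. It factors through $\ft_{(g),1}$: the weight-$2$ piece of the natural pairing between $H(\GC_{(g),1})$ and the weight-$2$ piece of the bar complex computing $H_{CE}$ forces the kernel of the induced map in weight $2$ to annihilate $\gr^2 H_{CE}(\GC_{(g),1})^*$, hence to contain the relations $R_{(g),1}$. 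The construction of the map $A_{(g),1} \to H_{CE}(\GC_{(g),1})$ is dual and analogous, using the universal property of the free graded commutative algebra and the relations $S_{(g),1}$.

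Finally, the hard step is to show that these morphisms are isomorphisms in weight $W \leq g/3$ and that $(\ft_{(g),1}, A_{(g),1})$ form a Koszul pair there. Here I would invoke Proposition \ref{prop:cohom}: given the concentration of both cohomologies in complementary critical degrees summing to $W$, the Koszul bar--cobar spectral sequence degenerates for weight $W\leq g/3$, which simultaneously forces the two comparison maps to be isomorphisms and identifies $\ft_{(g),1}$ and $A_{(g),1}$ as a Koszul pair in that range. The main technical obstacle is bookkeeping: propagating the bigrading by weight and imbalance through the bar construction, tracking the degree shifts so that the Lie-algebra side and the commutative-algebra side match up under Koszul duality, and verifying the collapse in the precise range $W\leq g/3$ rather than only asymptotically. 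Once Proposition \ref{prop:cohom} is formulated so as to accommodate this bigraded situation, the theorem is a direct consequence of the two vanishing theorems proved earlier in the paper.
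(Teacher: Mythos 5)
Your proposal matches the paper's proof: the paper likewise deduces Theorem \ref{thm:main Koszul} directly from Proposition \ref{prop:cohom} with $W_0=\lfloor \frac g3\rfloor$, using Theorems \ref{thm:main cohom GC vanishing} and \ref{thm:main CE all} to verify the concentration hypotheses (your check that $k_{crit}^L(W,H)+k_{crit}^C(W,H)=W$ is exactly the verification that the critical degrees are $\alpha W+\beta H$ and $(1-\alpha)W-\beta H$ with $\alpha=-\frac{2m-1}{2}$, $\beta=\frac12$). The construction of the comparison morphisms and the Koszulness statement are already packaged in parts (iv)--(vi) of that proposition, so your additional discussion is a correct unwinding of the same argument rather than a different route.
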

We also note that Proposition \ref{prop:cohom} in fact asserts that $\GC_{(g),1}$ and $C_{CE}\left(\GC_{(g),1}\right)$ are formal in the above range of weights, so that we understand the homotopy type of those dg Lie (resp. dg commutative) algebras in that range.

\smallskip

One has a natural action of the general linear group $\GL_g$ on all the objects considered above. More precisely, if we denote by $V_{g}=\Q^g$ the $g$-dimensional defining representation of $\GL_g$, then we have that $H^m(W_{g,1})\cong V_g$ and $H^{m+1}(W_{g,1})\cong V_g^*$, as $\GL_g$-representations.
Hence, $\GL_g$ acts on $\GC_{(g),1}$ respecting the $\Z\times\Z$-grading. Therefore, $\GL_g$ also acts on the cohomology of $\GC_{(g),1}$, its Lie algebra cohomology and the objects $\ft_{(g),1}$ and $A_{(g),1}$ derived from it.
In section \ref{sec:representations} the generators and relations of $\ft_{(g),1}$ and $A_{(g),1}$ will be computed explicitly as irreducible representations of $\GL_g$, so that we obtain a complete description of the cohomology, and Lie algebra cohomology of $\GC_{(g),1}$ in the range $W\leq \frac g3$.

Moreover, $\GC_{(g),1}$  may naturally be extended by the nilpotent, negatively graded Lie algebra $\osp^{nil}_{g,1}$ of endomorphisms of $H(W_{g,1})$ that respect the pairing pairing $\langle -,-\rangle: \bar H_\bullet(W_{g,1})\times \bar H_\bullet(W_{g,1}) \to \Q$, see section \ref{ospnil} for details.
(In particular, $\osp_{g,1}^{nil}$ is concentrated in cohomological degree $-1$.)
We then define the extended dg Lie algebra
\[
  \GCex_{(g),1} := \left(\osp_{g,1}^{nil}\ltimes \GC_{(g),1}, \delta\right).
\]

We extend the $\Z\times\Z$-bigrading to $\GCex_{(g),1}$ by declaring the part $\osp_{g,1}^{nil}$ to be concentrated in weight $W=0$ and imbalance $H=-2$, compatible with the dg Lie structure.
In particular, $\osp_{g,1}^{nil}$ acts on $\GCex_{(g),1}$ respecting the weight grading and 
\[
H\left(\GCex_{(g),1}\right) = \osp_{g,1}^{nil}\ltimes H(\GC_{(g),1}).
\]
We hence obtain the following Corollary of Theorems \ref{thm:main cohom GC vanishing} and \ref{thm:main Koszul}.

\begin{cor}
The cohomology $\gr^{(W,H)}H^k\left(\GCex_{(g),1}\right)$ vanishes if either $k<k^L_{crit}(W,H)$ or $k>k^L_{crit}(W,H)$ and $g\geq W+2$.
Furthermore, the morphism of $\Z\times\Z$-graded dg Lie algebras 
\[
	\osp_{g,1}^{nil}\ltimes\ft_{(g),1} \to H(\GCex_{(g),1})
\]
is an isomorphism on cohomology in weights $W\leq \frac g3$.
\end{cor}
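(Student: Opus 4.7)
The plan is to reduce the corollary to Theorems \ref{thm:main cohom GC vanishing} and \ref{thm:main Koszul} via the fact that the additional summand $\osp_{g,1}^{nil}$ of $\GCex_{(g),1}$ lives in one cohomological degree ($-1$) and occupies the single bidegree $(W,H)=(0,-2)$. Being concentrated in one degree it carries zero internal differential, and the semidirect product differential on
\[
  \GCex_{(g),1}=\osp_{g,1}^{nil}\ltimes \GC_{(g),1}
\]
produces no cross-terms between the two summands. Consequently, as a $\Z\times\Z$-bigraded chain complex, $\GCex_{(g),1}$ splits as $\osp_{g,1}^{nil}\oplus \GC_{(g),1}$, with $\osp_{g,1}^{nil}$ supported only at bidegree $(0,-2)$.

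For the vanishing statement, this splitting reduces the case $W\geq 1$ directly to Theorem \ref{thm:main cohom GC vanishing}. In weight $W=0$ the only nonzero piece is in imbalance $H=-2$, where the cohomology is $\osp_{g,1}^{nil}$ itself, concentrated precisely in degree $-1=k^L_{crit}(0,-2)$; the required vanishing outside the critical degree is then automatic.

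For the isomorphism, I would combine the identification $H(\GCex_{(g),1})=\osp_{g,1}^{nil}\ltimes H(\GC_{(g),1})$ recalled in the excerpt with Theorem \ref{thm:main Koszul}. The generating space $\gr^1 H(\GC_{(g),1})$ and the relations $R_{(g),1}\cong \gr^2 H_{CE}(\GC_{(g),1})^*$ defining $\ft_{(g),1}$ are by construction $\GL_g$-equivariant, and hence also $\osp_{g,1}^{nil}$-equivariant through the natural embedding of $\GL_g$ into the group of pairing-preserving automorphisms. The Koszul morphism $\ft_{(g),1}\to H(\GC_{(g),1})$ is therefore $\osp_{g,1}^{nil}$-equivariant and extends to a morphism of semidirect products
\[
  \osp_{g,1}^{nil}\ltimes \ft_{(g),1}\;\longrightarrow\;\osp_{g,1}^{nil}\ltimes H(\GC_{(g),1})=H(\GCex_{(g),1}).
\]
Since $\ft_{(g),1}$ is generated in weight $1$ it vanishes in weight $0$, so this morphism is the identity on $\osp_{g,1}^{nil}$ at $W=0$ and coincides with the Theorem \ref{thm:main Koszul} isomorphism for $1\leq W\leq g/3$. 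Combined, the map is an isomorphism in the full range $W\leq g/3$. The only non-formal ingredient is the $\osp_{g,1}^{nil}$-equivariance of the Koszul construction, which I expect to follow from the functoriality of Proposition \ref{prop:cohom} applied to $\osp_{g,1}^{nil}$-equivariant cohomological data, so I do not anticipate any substantial obstacle.
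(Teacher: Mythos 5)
Your argument is correct and matches the paper's (implicit) reasoning: the corollary is stated there as an immediate consequence of Theorems \ref{thm:main cohom GC vanishing} and \ref{thm:main Koszul}, using exactly the observations you make, namely that $\osp_{g,1}^{nil}$ sits in bidegree $(W,H)=(0,-2)$ and cohomological degree $-1=k^L_{crit}(0,-2)$, that the differential does not mix the two summands, and that $H(\GCex_{(g),1})=\osp_{g,1}^{nil}\ltimes H(\GC_{(g),1})$. One small wording point: $\GL_g$-equivariance does not by itself yield $\osp_{g,1}^{nil}$-equivariance (the nilpotent part does not lie in $\mathfrak{gl}_g$), so the correct justification is the one you give afterwards --- $\osp_{g,1}^{nil}$ acts on $\GC_{(g),1}$ by dg Lie algebra derivations, hence acts naturally on $\gr^1H(\GC_{(g),1})$, $\gr^2H_{CE}(\GC_{(g),1})$ and thus on $\ft_{(g),1}$, and the comparison map is equivariant by naturality.
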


One may also compute the cohomology of the Chevalley-Eilenberg complex of $\GCex_{(g),1}$, see Proposition \ref{prop:chevgcex} below.

\section{Notation and preliminaries}
\subsection{Notation}\label{sec:notation}
Unless otherwise stated all vector spaces are taken over the rationals $\Q$. 
We abbreviate the term differential graded by dg. We always use cohomological conventions, that is, differentials have degree $+1$, and we use $\mathbb Z$-gradings unless otherwise stated.
For $V$ a graded vector space we denote by $V[k]$ the same graded vector space with degrees shifted downwards by $k$ units. For example, if $V$ is concentrated in degree $0$, then $V[k]$ is concentrated in degree $-k$.

Almost all objects we consider will be bigraded objects in dg vector spaces or similar categories. That is, these objects come with three gradings, the cohomological grading and and two additional (``weight" and `ìmbalance'') gradings. 
Concretely, we will consider two incarnations of the additional gradings, namely a bigraded dg vector space $V$ may be written either as a direct sum
\[
  V = \bigoplus_{k,l} \gr^{k,l} V 
\]
or as a direct product
\[
  V = \prod_{k,l} \gr^{k,l} V 
\]
of dg sub-vector spaces $\gr^{k,l} V \subset V$.
We will call the second type of grading \emph{complete gradings}.
For example, the dual vector space $V^*$ of a dg vector space with additional grading $V=\bigoplus_{k,l} \gr^{k,l} V$ has a complete grading 
\[
  V^* = \prod_{k,l} \left(\gr^{k,l} V\right)^*.
\]
Often it is hence helpful to consider instead the graded dual dg vector space 
\[
V^c := \bigoplus_{k,l} \left(\gr^{k,l} V\right)^*. 
\]

If $V$ is equipped with further algebraic structure, for example a dg Lie or dg commutative algebra structure, then we will say that the additional gradings are compatible with that algebraic structure if the defining algebraic operations restrict to morphisms of dg vector spaces 
\[
\gr^{k,l} V \times \gr^{k',l'} V \to \gr^{k+k',l+l'} V.
\]

\subsection{Fundamental Theorems of Invariant Theory for the general linear Group}
In our proofs we will use the first and second fundamental theorem of invariant theory for the general linear group. Let $V$ be a finite dimensional vector space over the field $\Q$ and let $\GL(V)$ be the general linear group acting diagonally on the tensor product $V^{\otimes n}$. Additionaly, the left action of the symmetric group $S_n$ on $V^{\otimes n}$ is given by the permutation of the factors
\[
\sigma(v_1,v_2,\dots,v_n) = \left(v_{\sigma^{-1}(1)},v_{\sigma^{-1}(2)},\dots,v_{\sigma^{-1}(n)}\right).
\]
This map can be extended by linearity to a ring homomorphism $\Q[S_n] \rightarrow End\left(V^{\otimes n}\right)^{GL(V)}\cong \left(V^{\otimes n}\otimes (V^*)^{\otimes n}\right)^{GL(V)}$. From the first and second fundamental theorem of invariant theory for the general linear group as stated in theorem 9.1.2 and 9.1.3 in \cite{Loday} we can deduce the following theorem.
\begin{thm}[Fundamental Theorem of Invariant Theory] 
\label{fft}
Let $A,B\geq 0$ be integers, let $V$ be a finite dimensional vector space, and consider the space of invariants 
\[
\left(V^{\otimes A}\otimes (V^*)^{\otimes B}\right)^{\GL(V)}.
\]
If $A\neq B$ then this space is zero. If $A=B$ then the map 
\begin{equation}\label{equ:Sn inv map}
	\Q[S_A] \rightarrow 
	\left(V^{\otimes A}\otimes (V^*)^{\otimes A}\right)^{\GL(V)}.
\end{equation}
is surjective.  If furthermore $dim(V)\geq A$ then \eqref{equ:Sn inv map} is also injective and thus an isomorphism.
\end{thm}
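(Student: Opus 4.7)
The statement is essentially a reformulation of Loday's Theorems 9.1.2 and 9.1.3 after identifying $V^{\otimes A}\otimes (V^*)^{\otimes A}$ with $\End(V^{\otimes A})$, so the strategy is to (a) dispose of the case $A\neq B$ by a weight argument on the center of $\GL(V)$, and (b) translate the $A=B$ case into the classical statement about $\GL(V)$-equivariant endomorphisms of a tensor power.

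For part (a), the plan is to consider the one-parameter subgroup of scalar matrices $t\cdot\id\in\GL(V)$, for $t\in\Q^\times$. This subgroup acts on $V^{\otimes A}\otimes (V^*)^{\otimes B}$ by the character $t\mapsto t^{A-B}$. An element invariant under all of $\GL(V)$ is in particular invariant under this subgroup, so for $A\neq B$ it must vanish (for instance upon specializing to any $t$ with $t^{A-B}\neq 1$, which exists over $\Q$). Hence the invariant space is trivial when $A\neq B$.

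For part (b), the plan is to use the canonical $\GL(V)$-equivariant isomorphism
\[
  V^{\otimes A}\otimes (V^*)^{\otimes A} \xrightarrow{\;\cong\;} \End\bigl(V^{\otimes A}\bigr),
\]
under which $\GL(V)$ acts on the right-hand side by conjugation. Taking invariants yields
\[
  \bigl(V^{\otimes A}\otimes (V^*)^{\otimes A}\bigr)^{\GL(V)} \cong \End_{\GL(V)}\bigl(V^{\otimes A}\bigr).
\]
Under this identification, the map \eqref{equ:Sn inv map} becomes exactly the natural map $\Q[S_A]\to \End_{\GL(V)}(V^{\otimes A})$ sending $\sigma$ to the permutation operator on tensor factors. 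Loday's Theorem 9.1.2 (the first fundamental theorem) asserts that this map is surjective, which gives surjectivity of \eqref{equ:Sn inv map}. When additionally $\dim V\geq A$, Loday's Theorem 9.1.3 (the second fundamental theorem) states that the kernel is trivial, so the map is an isomorphism.

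The only mildly delicate point is to verify that the canonical identification $V^{\otimes A}\otimes (V^*)^{\otimes A}\cong \End(V^{\otimes A})$ intertwines the symmetric group action of the statement with the action on $\End(V^{\otimes A})$ by permutation of tensor factors. This is a routine check on a basis: for $\{e_i\}\subset V$ and dual basis $\{e^i\}\subset V^*$, the permutation $\sigma\in S_A$ corresponds to $\sum_{i_1,\dots,i_A}e_{i_{\sigma^{-1}(1)}}\otimes\cdots\otimes e_{i_{\sigma^{-1}(A)}}\otimes e^{i_1}\otimes\cdots\otimes e^{i_A}$ in $V^{\otimes A}\otimes (V^*)^{\otimes A}$, in agreement with the sign and indexing conventions fixed above. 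Once this bookkeeping is in place, both claims reduce directly to Loday's theorems, and there is no real obstacle beyond the translation itself.
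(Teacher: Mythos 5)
Your proposal is correct and follows essentially the same route as the paper, which likewise deduces the statement by identifying $\left(V^{\otimes A}\otimes (V^*)^{\otimes A}\right)^{\GL(V)}$ with $\End_{\GL(V)}\left(V^{\otimes A}\right)$ and citing Loday's Theorems 9.1.2 and 9.1.3; the scalar-matrix argument for the case $A\neq B$ is the standard one and is implicit there. No issues.
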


\subsection{Koszul duality}

We refer to \cite{FNW} for a recollection of the Koszul duality theory and the relation to vanishing theorems for cohomology.
Here we shall need only a slight variation of \cite[Proposition 47]{FNW} as follows.

Let us first introduce some notation, cf. \cite[section 2]{FNW}. Let $V$ be a dg vector space. Then the (cohomological-degree-)truncation of $V$ is
\[
 (\tru^{\leq k}V)^j
 =
\begin{cases}
  V^j & \text{for $j<k$} \\
  \{x\in V^k\mid dx=0\} & \text{for $j=k$} \\
  0 & \text{for $j>k$}
\end{cases}.
\]
Here $V^j$ refers to the part of $V$ of cohomological degree $j$.
Next, suppose that $V$ is equipped with an additional $\Z\times \Z$-grading, so that it is totally tri-graded.
For $\alpha,\beta$ numbers we then denote by
\begin{equation}\label{equ:H sq alpha}
  H^{[\alpha,\beta]}V = \bigoplus_{k,l} \gr^{k,l} H^{k\alpha+l\beta}(V)[-k\alpha-l\beta]
\end{equation}
the trigraded vector space whose part of additional degree $(k,l)$ is concentrated in cohomological degree $\alpha k+\beta l$ and agrees with the cohomology of $\gr^{k,l} V$ there.

Furthermore, we set 
\begin{equation}\label{equ:tru sq alpha}
  \tru^{[\alpha,\beta]}V = \bigoplus_{k,l} \gr^{k,l} \tru^{\leq k\alpha+l\beta}(V).
\end{equation}
We note that if $V$ carries further algebraic structure (say a dg Lie or dg commutative algebra structure), compatible with the additional grading, then $\tru^{[\alpha,\beta]}V$ and $H^{[\alpha,\beta]}(V)$ inherit that structure.
Furthermore, one always has a zigzag of dg vector spaces with additional grading 
\[
V \leftarrow  \tru^{[\alpha,\beta]}V \to H^{[\alpha,\beta]}(V)
\]
given by the natural inclusion and projection.
This zigzag also preserves the given algebraic structure on $V$ if present.

\begin{prop}[Essentially Proposition 47 of \cite{FNW}]
	\label{prop:cohom}
	Let $W_0\geq 2$ be an integer and $\alpha,\beta\in \Q$.
	Suppose that $\fg$ is a dg Lie algebra with an additional $\Z_{>0}\times \Z$-grading.
	We refer to the two grading parameters as the ``weight" $W\geq 1$ and imbalance $H\in \Z$.
	Suppose that $\fg$ satisfies the following properties.
	\begin{itemize}
	  \item Each graded piece $\gr^{(W,H)}\fg$ is finite dimensional.
	  \item $\gr^{(W,H)} H(\fg)$ is concentrated in cohomological degree $\alpha W + \beta H$ for any $W\leq W_0$ and $H\in \Z$.
	  \item The cohomology of the weight $(W,H)$ piece of the cobar construction $\gr^{(W,H)} \Bar^c \fg^c$ is concentrated in cohomological degree $(1-\alpha)W-\beta H$ for any $W\leq W_0$ and $H\in \Z$.
	\end{itemize}
	Then the following hold:
	\begin{enumerate}[(i)]
	  \item $\fg$ is formal up to weight $W_0$ in the sense that the zigzag of morphisms of dg Lie algebras
	  \[
	  \fg \leftarrow \tru^{[\alpha,\beta]}\fg \to H^{[\alpha,\beta]}(\fg)   
	  \] 
	(see \eqref{equ:H sq alpha}, \eqref{equ:tru sq alpha} for the notation) induces a zigzag of quasi-isomorphisms of complexes on the part of weight $W$ for any $W\leq W_0$.
	  \item Similarly, the dg commutative algebra $\Bar^c \fg^c$ is formal up to weight $W_0$ in the sense that the zigzag of morphisms of dg commutative algebras 
	  \[
		\Bar^c \fg^c \leftarrow \tru^{[1-\alpha,-\beta]}
		\Bar^c \fg^c \to H^{[1-\alpha,-\beta]}(\Bar^c \fg^c)
	  \]
	  induces quasi-isomorphisms on the part of weight $W\leq W_0$.
  
	  \item Let $V:=\gr^1 H(\fg)\cong \gr^1 H(\Bar \fg)[-1]$, $R:= \gr^2H(\Bar \fg)$ and $S:=\gr^2H(\fg^c)$. Then the maps 
	  \begin{align}\label{equ:kos2 inj}
		R &\to \wedge^2V & S\to S^2(V[-1]^*)  
	  \end{align}
	  given by the (reduced) coproduct and the cobracket are injections. We denote the images by $R$ and $S$ as well, abusing notation.
	  Furthermore, $S$ is (up to degree shift) the annihilator of $R$ if we identify $(\wedge^2V)^*\cong S^2(V[-1]^*)[2]$.
	  \item The graded Lie algebra $\ft$ and graded commutative algebra $A$ defined via the quadratic presentations
	  \begin{align*}
		\ft &= \FreeLie(V)/\langle R\rangle & 
		A= S(V^*[-1]) / \langle S\rangle.
	  \end{align*}
	  are quadratic duals of each other.
	  \item The maps of graded Lie, respectively graded commutative algebras
	  \begin{align}\label{equ:xxx3}
		\ft &\to H(\fg) & 
		A&\to H(\Bar^c \fg^c)
	  \end{align}
	  defined by the obvious inclusion of generators are well-defined, respect the weight and imbalance gradings and induce isomorphisms on the parts of weight $W$ and imbalance $H$ for each $W\leq W_0$ and $H\in \Z$.
	  \item The pair $\ft$ and $A$ are Koszul up to weight $W_0$ in the sense that the cohomology of $\gr^{(W,H)}\Bar \ft$ (respectively $\gr^{(W,H)}\Bar A$) is concentrated in cohomological degree $(\alpha-1) W+\beta H$ (respectively $-\alpha W-\beta H$) for $W\leq W_0$, $H\in \Z$.

	  Equivalently, the maps 
	  \begin{align*}
		\Bar^c(A^c) &\to \ft 
		&
		\Bar^c(\ft^c)&\to A 
	  \end{align*}
	  are quasi-isomorphisms in weight $\leq W_0$.
	\end{enumerate}
  \end{prop}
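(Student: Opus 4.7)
This proposition is a bigraded refinement of \cite[Proposition 47]{FNW}: the single weight of \cite{FNW} is replaced by the pair $(W,H)$, and the cohomological concentration degrees become the linear functions $\alpha W + \beta H$ (for $\fg$) and $(1-\alpha) W - \beta H$ (for $\Bar^c \fg^c$). The plan is to reuse the arguments of \cite{FNW} verbatim, checking at each step that they depend only on additivity of the grading under the Lie bracket and cobar differential, and on linearity of the concentration degrees in $(W,H)$. Since the bracket sends $\gr^{(W,H)} \fg \otimes \gr^{(W',H')} \fg$ into $\gr^{(W+W',H+H')} \fg$ and both concentration degrees are additive in $(W,H)$, every truncation and induction in \cite{FNW} transports without change, and the imbalance $H$ behaves essentially as a harmless passenger.

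For (i)--(ii): Each finite-dimensional piece $\gr^{(W,H)} \fg$ with $W \leq W_0$ has cohomology concentrated in the single degree $\alpha W + \beta H$. Consequently the canonical zigzag
\[
\fg \hookleftarrow \tru^{[\alpha,\beta]}\fg \twoheadrightarrow H^{[\alpha,\beta]}(\fg)
\]
consists of quasi-isomorphisms on each such piece. Both maps are morphisms of dg Lie algebras: the truncation is closed under the bracket because the cutoff $\alpha W + \beta H$ is additive in $(W,H)$, and the projection is a chain map because the target has zero differential (so differentials out of subtop degrees land either in top degree cocycles that represent zero classes, or in even lower degrees where the projection vanishes). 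The identical argument applied to the dg commutative algebra $\Bar^c \fg^c$ with parameters $(1-\alpha,-\beta)$ gives (ii).

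For (iii)--(iv): Restricted to weight $\leq 2$, the bar complex reduces to $\gr^1 \fg[1] \oplus \gr^2 \fg[1] \oplus S^2(\gr^1 \fg[1])$ with differential given by the bracket plus the internal differential of $\fg$. Passing to cohomology identifies $V = \gr^1 H(\fg)$ and exhibits $R = \gr^2 H(\Bar \fg)$ as a subspace of $\wedge^2 V$ via the reduced coproduct; the dual calculation exhibits $S$ as a subspace of $S^2(V[-1]^*)$. The annihilator relation $S = R^\perp$ follows from the canonical perfect pairing between the weight-$2$ parts of $\Bar \fg$ and $\Bar^c \fg^c$, exactly as in \cite{FNW}. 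Part (iv) is then the definition of quadratic duality.

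For (v)--(vi): The universal properties of the presentations provide the two maps \eqref{equ:xxx3}, which respect both gradings by construction. An induction on $W$ shows them to be isomorphisms up to weight $W_0$: in the inductive step one compares the weight-$(W,H)$ parts of $\Bar \ft$ with $\Bar \fg$ (respectively $\Bar A$ with $\Bar^c \fg^c$), noting that by formality (i)--(ii) these cohomologies agree through weight $W-1$, while the concentration hypothesis on $\Bar^c \fg^c$ forces the next weight to match the quadratic-dual cohomology of $A$. This simultaneously yields (v) and the Koszulness of $(\ft, A)$ up to weight $W_0$, proving (vi). The main obstacle will be the careful bookkeeping of the bigraded decomposition in the spectral sequences underlying the arguments of \cite{FNW}; however, since the imbalance $H$ is preserved by all relevant operations and the concentration degrees depend linearly on $(W,H)$, no essentially new ingredient beyond the original proof is required.
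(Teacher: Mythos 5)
Your proposal is correct and takes essentially the same route as the paper: the paper's own proof is a one-line reduction to \cite[Proposition 47]{FNW}, observing that the argument goes through verbatim once one restricts to pieces of fixed imbalance $H$, which is exactly your strategy of treating $H$ as a compatible extra grading with linearly varying concentration degrees. Your sketches of parts (i)--(vi) supply more detail than the paper does, but no new idea is introduced or needed.
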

  \begin{proof}
	The version of this proposition without imbalance grading is shown in \cite{FNW}. Our variant follows by repeating the proof, and restricting to pieces of fixed imbalance $H$ where appropriate.
	% \todo{We could expand this ....}
  \end{proof}

%  \begin{lemma}
%	Let $V$ be a finite dimensional graded vector space and $R\subset \wedge^2V$ a graded subspace.
%	Consider the Lie algebra $\ft=\FreeLie(V)/\langle R\rangle$ and the quadratic dual graded commutative algebra $A=S(V^*[-1])/\langle R^\perp \rangle$.
%	We consider $V$ (and $V^*$ concentrated in weight 1, and endow $\ft$ and $A$ with the corresponding weight gradings, and we assume that for some $W_0$ the pair $(\ft,A)$ is Koszul in weight $\leq W_0$.
%	Let $\fh$ be a finite dimensional abelian graded Lie algebra acting on $V$, such that the induced action on $\wedge^2 V$ preserves $R$, i.e., $\fh\cdot R\subset R$. 
%
%Consider the quadratic-linear dual pair 
%\[
%(\hat \ft:=\fh \ltimes \ft, \hat A := C_{CE}(\fh, A)),	
%\]
%Its associated quadratic pair $(q\hat \ft, q\hat A)$ is Koszul in weight $\leq W_0$, and the natural maps
%\begin{align*}
%	\Bar^c(\hat A^c) &\to \hat \ft 
%	&
%	\Bar^c(\hat \ft^c)&\to \hat A
%  \end{align*}
%  are quasi-isomorphisms in weight $\leq W_0$.
%  Here we consider $\fh$ to be concentrated in weight 0 and thus extend the weight gradings to $\hat \ft$ and $\hat A$.
%
%\end{lemma}
%
%\begin{proof}
%
%\end{proof}

\subsection{Extension of the general linear Lie algebra, $\osp_{g}^{nil}$ and $\osp_{g,1}^{nil}$} 
\label{ospnil}
We extend the general linear Lie algebra $\mathfrak{gl_{g}}$ to $\osp_g:=\mathfrak{gl_{g}}\ltimes\osp_g^{nil}$, where $\osp_g^{nil}$ denotes the nilpotent part composed of endomorphisms $\phi$ of $H^\bullet(W_{g})$ with degree $deg(\phi)<0$ which leave the antisymmetric Poincaré duality pairing
\[
\langle -,-\rangle: H^\bullet(W_{g})\times H^\bullet(W_{g}) \to \Q
\]
invariant. Let $\{1,\alpha_1,\dots,\alpha_g,\beta_1,\dots,\beta_g,\omega\}$ denote a basis of $H^\bullet(W_{g})$ such that the pairing is $\langle \alpha_i,\beta_j\rangle=\delta_{ij}=-\langle \beta_j,\alpha_i\rangle$ and $\langle 1,\omega\rangle=1=-\langle \omega,1\rangle$ with all other pairings between basis elements being zero. We have $H^0(W_g)=\Q1$, $H^m(W_g)=span\{\alpha_1,\dots,\alpha_g\}$, $H^{m+1}(W_g)=span\{\beta_1,\dots,\beta_g\}$, and $H^{2m+1}(W_g)=\Q\omega$.

\begin{lemma}
	$\osp_{g}^{nil}$ consists of maps of three different degrees, which leave the Poincaré duality paring invariant:
	\begin{enumerate}[i)]
		\item $deg(\phi) = -1:\beta_i \to \sum_j A_{ij}\alpha_j$ with $A=(-1)^{m+1}A^T$
		\item $deg(\phi) = -m:\omega \to \sum_j \lambda_j\beta_j$, $\alpha_i\to(-1)^{m+1}\lambda_i1$
		\item $deg(\phi) = -m-1:\omega \to \sum_j \mu_j\alpha_j$, $\beta_i\to(-1)^{m+1}\mu_i1$	
	\end{enumerate}
\end{lemma}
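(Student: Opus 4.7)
The plan is to enumerate all negative integers $d$ for which there can exist a nonzero endomorphism $\phi$ of $H^\bullet(W_g)$ of cohomological degree $d$, and then to impose invariance of the pairing on the resulting ansatz. Since $H^\bullet(W_g)$ is concentrated in degrees $0$, $m$, $m+1$ and $2m+1$, the only possible negative degrees are the negative differences among these, namely $-1$, $-m$, $-(m+1)$ and $-(2m+1)$.

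For each of these degrees I would first write down the most general ansatz dictated by the degree shift: in degree $-1$ only $\beta_i$ can have nonzero image and it must lie in the span of the $\alpha_j$'s (giving a matrix $A$); in degree $-m$ only $\omega$ and the $\alpha_i$ can be nonzero, landing in $\spanaddeq H^{m+1}$ and $\Q\cdot 1$ respectively; in degree $-(m+1)$ the situation is symmetric with the roles of $\alpha$ and $\beta$ swapped; and in degree $-(2m+1)$ only $\omega\mapsto c\cdot 1$ is possible.

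Next I would impose the Koszul-signed derivation rule encoding invariance of the pairing,
\[
\langle \phi(x), y\rangle + (-1)^{d\,|x|}\langle x, \phi(y)\rangle = 0,
\]
on a single well-chosen pair of basis vectors in each case. For $d=-1$ the pair $(\beta_i,\beta_j)$, together with $\langle\beta_i,\alpha_k\rangle=-\delta_{ik}$, immediately yields $A = (-1)^{m+1}A^T$. For $d=-m$ the pair $(\omega,\alpha_i)$, using the parity identity $(-1)^{m(2m+1)}=(-1)^m$ and $\langle\omega,1\rangle=-1$, produces the relation $\mu_i = (-1)^{m+1}\lambda_i$ of case (ii). The analogous computation on $(\omega,\beta_i)$ for $d=-(m+1)$ handles case (iii). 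The degree $d=-(2m+1)$, absent from the statement, is ruled out by applying invariance to $x=y=\omega$, which combined with the antisymmetry $\langle 1,\omega\rangle=-\langle\omega,1\rangle$ forces $2c=0$.

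The whole argument is essentially a degree count followed by a one-line linear algebra computation for each of the four possible degrees. The only place where care is needed is the bookkeeping of signs, combining the Koszul sign $(-1)^{d\,|x|}$ from the derivation rule with the graded antisymmetry of the Poincar\'e pairing; once this is done correctly, the three cases listed in the lemma fall out immediately and the fourth candidate degree is eliminated.
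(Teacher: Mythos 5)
Your proposal is correct and follows essentially the same route as the paper: write the general ansatz for each possible negative degree and impose the Koszul-signed invariance condition $\langle \phi(x),y\rangle + (-1)^{|\phi||x|}\langle x,\phi(y)\rangle=0$ on one well-chosen pair of basis vectors per case to extract the sign relations. You are in fact slightly more complete than the paper, which only verifies the three listed cases and leaves implicit both the enumeration of admissible degrees and the elimination of the degree $-(2m+1)$ map $\omega\mapsto c\cdot 1$ (your computation $2c=0$ on the pair $(\omega,\omega)$ is the right way to rule it out).
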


\begin{proof}\
\begin{enumerate}[i)]
	\item 
	\begin{align*}
		\left\langle\phi(\beta_i),\beta_j\right\rangle + (-1)^{|\phi||\beta_i|}\left\langle\beta_i,\phi(\beta_j)\right\rangle 
		&= \sum_{k}A_{ik}\langle\alpha_k,\beta_j\rangle + (-1)^{m+1}\sum_{l}A_{jl}\langle\beta_i,\alpha_l\rangle\\
		&=	A_{ij}-(-1)^{m+1}A_{ji}=0
	\end{align*}
	$\implies A=(-1)^{m+1}A^T$
	\item
	$\omega \to \sum_j \lambda_j\beta_j$, $\alpha_i\to\tilde{\lambda}_i1$
	\begin{align*} 
		\left\langle\phi(\omega),\alpha_j\right\rangle + (-1)^{|\phi||\omega|}\left\langle\omega,\phi(\alpha_j)\right\rangle 
		&= \sum_{i}\lambda_i\langle\beta_i,\alpha_j\rangle + (-1)^{m}\tilde{\lambda}_j\langle\omega,1\rangle\\
		= -\left(\lambda_j+(-1)^m\tilde{\lambda}_j\right)=0
	\end{align*}
	$\implies \tilde{\lambda}=(-1)^{m+1}\lambda$
	\item
	$\omega \to \sum_j \mu_j\alpha_j$, $\beta_i\to\tilde{\mu}_i1$
	\begin{align*} 
		\left\langle\phi(\omega),\beta_j\right\rangle + (-1)^{|\phi||\omega|}\left\langle\omega,\phi(\beta_j)\right\rangle 
		=& \sum_{i}\mu_i\langle\alpha_i,\beta_j\rangle + (-1)^{m+1}\tilde{\mu}_j\langle\omega,1\rangle\\
		=& \mu_j-(-1)^{m+1}\tilde{\mu}_j=0
	\end{align*}
	$\implies \tilde{\mu}=(-1)^{m+1}\mu$
\end{enumerate}
\end{proof}

Let $\tilde\Delta \in S^2(H^\bullet(W_{g}))$ be the canonical diagonal element:

\[
\tilde\Delta = 1\otimes \omega -\omega\otimes 1+(-1)^m \sum_{i=1}^g (\alpha_i\otimes \beta_i - \beta_i\otimes \alpha_i).
\]

\begin{lemma}
	Let $x\in osp_g^{nil}$ then $x\cdot\tilde\Delta=0$.
\end{lemma}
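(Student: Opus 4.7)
The plan is to exploit the general principle that, for a non-degenerate graded (anti)symmetric pairing, a linear endomorphism preserves the pairing if and only if it annihilates the associated copairing via the Leibniz action. The element $\tilde\Delta$ is precisely this copairing for $\langle-,-\rangle$ on $H^\bullet(W_g)$, so the claim is essentially a dual reformulation of the defining invariance property of $\osp_g^{nil}$.

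To make this concrete, I would verify $x\cdot\tilde\Delta=0$ by case analysis on the three families of generators identified in the previous lemma: those of degree $-1$ parametrized by a matrix $A$ with $A=(-1)^{m+1}A^T$, those of degree $-m$ parametrized by a vector $\lambda$, and those of degree $-m-1$ parametrized by a vector $\mu$. By linearity of the Leibniz action, it suffices to treat one generator from each family.

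For each such $x$, I would apply the graded Leibniz rule $x(a\otimes b)=x(a)\otimes b+(-1)^{|x||a|}a\otimes x(b)$ to each of the four summands $1\otimes\omega$, $\omega\otimes 1$, $\alpha_i\otimes\beta_i$ and $\beta_i\otimes\alpha_i$ of $\tilde\Delta$. Since $x$ vanishes on most basis vectors in each case, most terms drop, and what survives reduces to a single sum of rank-two tensors whose coefficients combine the parameters of $x$ with signs from the Leibniz rule and from the $(-1)^m$ prefactor in $\tilde\Delta$. The invariance constraint on the parameters (such as $A=(-1)^{m+1}A^T$, or $\tilde\lambda=(-1)^{m+1}\lambda$, or $\tilde\mu=(-1)^{m+1}\mu$) is then exactly what is needed to make these coefficients cancel.

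The main obstacle is sign bookkeeping: the degrees $m$, $m+1$, $2m+1$ of the basis vectors, the negative degrees $-1$, $-m$, $-m-1$ of the three types of generators, and the $(-1)^m$ sign in $\tilde\Delta$ all combine to produce several distinct powers of $-1$ that must be tracked carefully. A useful cross-check is that the condition $x\cdot\tilde\Delta=0$ should reproduce exactly the constraints on $A$, $\lambda$, $\mu$ derived in the previous lemma, since both statements express the same underlying duality between pairing and copairing; any mismatch signals a sign error in one of the two calculations. Alternatively, one can package all three cases at once by writing $\tilde\Delta$ as $\sum_a \pm e_a\otimes e^a$ for dual bases $\{e_a\},\{e^a\}$ of $H^\bullet(W_g)$ with $\langle e_a,e^b\rangle=\delta_a^b$, and deriving $x\cdot\tilde\Delta=0$ abstractly from the identity $\langle x(e_a),e_b\rangle+(-1)^{|x||e_a|}\langle e_a,x(e_b)\rangle=0$ by expanding $x(e_a)$ in the dual basis.
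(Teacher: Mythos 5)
Your proposal is correct and follows essentially the same route as the paper: a case-by-case verification over the three families of generators of $\osp_g^{nil}$, applying the graded Leibniz rule to the summands of $\tilde\Delta$ and using the symmetry constraints on $A$, $\lambda$, $\mu$ to produce the cancellation. The dual-basis repackaging you mention at the end is a valid streamlining, but the paper carries out the explicit three-case computation just as you describe.
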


\begin{proof}\
\begin{enumerate}[i)]
	\item 
	\[
		x\cdot\tilde\Delta = (-1)^m\sum_{i,j} A_{ij}\left(\beta_j\otimes \beta_i-(-1)^{|x||\beta_i|}\beta_i\otimes\beta_j\right)
		= (-1)^m\left((-1)^{(m+1)(m+1)}-(-1)^{m(m+1)}\right)\sum_{i,j} A_{ij}\beta_i\otimes\beta_j
		=0
	\]
	with $(-1)^{(m+1)(m+1)}-(-1)^{m+1} = (-1)^{m^2+2m+1}-(-1)^{m+1}= (-1)^{m+1}-(-1)^{m+1}=0$
	\item
	\begin{align*}
		x\cdot\tilde\Delta &= \sum_i\lambda_i\left(1\otimes\beta_i-\beta_i\otimes1\right)+(-1)^m\sum_i(-1)^{m+1}\lambda_i\left(1\otimes\beta_i-(-1)^{|x||\beta_i|}\beta_i\otimes 1\right)\\
		&= \sum_i\lambda_i\left(1\otimes\beta_i-\beta_i\otimes1-\left(1\otimes\beta_i-(-1)^{m(m+1)}\beta_i\otimes 1\right)\right)\\
		&= \sum_i\lambda_i\left(1\otimes\beta_i-\beta_i\otimes1-\left(1\otimes\beta_i-\beta_i\otimes 1\right)\right)\\
		&=0
	\end{align*}
	\item
	\begin{align*}
		x\cdot\tilde\Delta &= \sum_i\mu_i\left(1\otimes\alpha_i-\alpha_i\otimes1\right)+(-1)^m\sum_i(-1)^{m+1}\mu_i\left(1\otimes\alpha_i-(-1)^{|x||\alpha_i|}\alpha_i\otimes 1\right)\\
		&= \sum_i\mu_i\left(1\otimes\alpha_i-\alpha_i\otimes1-\left(1\otimes\alpha_i-(-1)^{(m+1)m}\alpha_i\otimes 1\right)\right)\\
		&= \sum_i\mu_i\left(1\otimes\alpha_i-\alpha_i\otimes1-\left(1\otimes\alpha_i-\alpha_i\otimes 1\right)\right)\\
		&=0
	\end{align*}
\end{enumerate}	
\end{proof}

$\osp_{g,1}:=\mathfrak{gl}_g\ltimes\osp_{g,1}^{nil}$ is defined in the same way, but without the top class $\omega$. In particular $\osp_{g,1}^{nil}$ is composed just of the endomorphisms of degree $-1$
\[
	\osp_{g,1}^{nil} := \{ x\in \osp_{g}^{nil} \mid |x|=-1 \}.
\]

\subsection{Preliminary Lemmas from Homological Algebra} 

We shall use the following three standard results of homological algebra.
%\begin{lemma}
%Let $f:U\rightarrow V$ be a map of dg vector spaces. $f$ is a quasi-isomorphism if and only if its mapping cone is acyclic, i.e. $H(cone(f))=0$.
%\end{lemma}

\begin{lemma}
\label{lemma:inclusion}
Suppose $(V,d)$ is a dg vector space and $U\subset V$ is a dg subspace, i.e., $dU\subset U$.
Then the inclusion $U\hookrightarrow V$ is a quasi-isomorphism if and only if the quotient dg vector space $V/U$ is acyclic, i.e. $H(V/U)=0$.
\end{lemma}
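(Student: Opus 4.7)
The plan is to invoke the long exact sequence in cohomology associated to the short exact sequence of dg vector spaces
\[
0 \to U \hookrightarrow V \twoheadrightarrow V/U \to 0,
\]
which is standard in the category of dg $\Q$-vector spaces and follows from the snake lemma applied termwise. The resulting long exact sequence reads
\[
\cdots \to H^{k-1}(V/U) \to H^k(U) \to H^k(V) \to H^k(V/U) \to H^{k+1}(U) \to \cdots,
\]
where the middle arrow is the map induced by the inclusion $U\hookrightarrow V$.

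For the ``if'' direction, I would assume $H(V/U)=0$ and observe that in each segment of the sequence the two terms $H^{k-1}(V/U)$ and $H^k(V/U)$ vanish, so by exactness the map $H^k(U)\to H^k(V)$ is sandwiched between a zero source and a zero target, hence is an isomorphism for every $k$. For the converse, assuming the inclusion is a quasi-isomorphism means each $H^k(U)\to H^k(V)$ is an isomorphism; then the connecting map $H^{k-1}(V/U)\to H^k(U)$ has image inside the kernel of an injection and must be zero, while the map $H^k(V)\to H^k(V/U)$ lands in the kernel of an injective connecting map after a surjection, i.e.\ is zero as well. Exactness at $H^k(V/U)$ then forces $H^k(V/U)=0$.

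There is essentially no substantive obstacle here: the only ingredient is the existence of the long exact sequence in cohomology, which is immediate in the abelian category of $\Q$-vector spaces, and after that the argument is a diagram chase of at most two or three arrows on each side.
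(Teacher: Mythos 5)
Your proof is correct, but it takes a different route from the paper. The paper proves this lemma via the mapping cone of the inclusion $i\colon U\hookrightarrow V$: it uses the criterion that $i$ is a quasi-isomorphism if and only if $H(\mathrm{cone}(i))=0$, and then unwinds both that condition and the condition $H(V/U)=0$ into explicit element-level statements (``for every cocycle there exists a primitive with such-and-such property'') and checks by hand that each implies the other, using $dU\subset U$ at the key step. You instead invoke the long exact sequence in cohomology associated to the short exact sequence $0\to U\to V\to V/U\to 0$ and finish with a short diagram chase; your chase in both directions is correct (in the converse direction, surjectivity of $H^k(U)\to H^k(V)$ kills the map $H^k(V)\to H^k(V/U)$ and injectivity of $H^{k+1}(U)\to H^{k+1}(V)$ kills the connecting map, so exactness at $H^k(V/U)$ forces it to vanish). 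The two arguments are of course two faces of the same fact, since the cone of the inclusion is quasi-isomorphic to the quotient; your version is shorter and more conceptual once the long exact sequence is granted, while the paper's version is entirely self-contained at the level of elements and avoids appealing to the connecting homomorphism. Either is acceptable.
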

\begin{proof}
	Let $cone(i)$ be the mapping cone of the inclusion $i:U\hookrightarrow V$.
	Then we have:

$i:U\hookrightarrow V$ is a quasi-isomorphism\\
$\Leftrightarrow H(cone(i))=0$\\
$\Leftrightarrow \forall (u,v) \in U\oplus V$ with $(-du,dv-i(u))=0$, $\exists(\tilde u,\tilde v)\in U\oplus V$ such that $(-d\tilde u, d\tilde v -i(\tilde u)) =(u,v)\quad (1)$\\

On the other hand:
$H(V/U)=0$\\
$\Leftrightarrow\forall v\in V$ with $dv \in U$, $\exists \tilde v \in V$ with $d\tilde v-v\in U$\\
$\Leftrightarrow \forall v\in V$ with $dv=i(u)$ for some $u\in U$, $\exists \tilde v \in V, \tilde u \in U$ with $d\tilde v-v= i(\tilde u)\quad (2)$\\

$(1)\Rightarrow(2)$:\\
The precondition of $(2)$ implies the precondition of $(1)$ since $du=i(du)=di(u)=d^2v=0$ wehre we used that $U$ is a dg subspace. The postcondition of $(1)$ implies the postcondition of $(2)$.

$(2)\Rightarrow(1)$:\\
The precondition of $(1)$ implies the precondition of $(2)$. The postcondition of $(2)$ implies the postcondition of $(1)$ since $d\tilde u=i(d\tilde u)=di(\tilde u)=d^2\tilde v-dv=i(u)=u$ where we used that $U$ is a dg subspace.
\end{proof}

%\begin{lemma}
%\label{lemma:graded}
%Let $(U,d)$ and $(V,d)$ be dg vector spaces equipped with bounded filtrations $\mF^\bullet(-)$.
%Let $f:U \to V$ be a map of dg vector spaces compatible with the filtrations in the sense that $f(\mF^pU)\subset \mF^pV$.
%Suppose that the map on the associated graded complexes induced by $f$,
%\[
%\gr f: \gr U\to \gr V  
%\]
%is a quasi-isomorphism. Then $f$ is a quasi-isomorphism as well. In particular, if $\gr V$ is acyclic, so is $V$
%\end{lemma}

\begin{lemma}
\label{lemma:graded}
Let $(V,d)$ be a dg vector space equipped with a bounded filtration $\mF^\bullet(-)$.
If the associated graded dg vector space $\gr V$ is acyclic, so is $V$.
\end{lemma}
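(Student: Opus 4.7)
The plan is to reduce the statement to a standard induction on the filtration length, using the long exact sequence in cohomology. Since the filtration $\mF^\bullet V$ is bounded, we may write it (say, as a decreasing filtration) in the form
\[
  0 = \mF^{N+1} V \subset \mF^N V \subset \mF^{N-1} V \subset \cdots \subset \mF^0 V = V,
\]
with $\gr^p V = \mF^p V / \mF^{p+1} V$ acyclic for each $p$ by hypothesis.

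First I would observe that for each $p$ there is a short exact sequence of dg vector spaces
\[
  0 \to \mF^{p+1} V \to \mF^p V \to \gr^p V \to 0,
\]
which induces a long exact sequence on cohomology. The inductive claim is that $H(\mF^p V) = 0$ for every $p$. The base case $p = N+1$ is immediate since $\mF^{N+1} V = 0$. For the inductive step, if $H(\mF^{p+1} V) = 0$ and $H(\gr^p V) = 0$, then the long exact sequence forces $H(\mF^p V) = 0$ as well.

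Taking $p = 0$ gives $H(V) = H(\mF^0 V) = 0$, which is the desired conclusion. The case of an increasing bounded filtration is entirely analogous, with the induction running in the opposite direction.

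There is no real obstacle here; the argument is a routine application of the long exact sequence and induction, made possible by boundedness of the filtration (which ensures the induction terminates). Alternatively, one could phrase the same argument as the convergence of the spectral sequence associated to $\mF^\bullet V$, whose $E_1$ page vanishes by hypothesis and which converges to $H(V)$ because of boundedness.
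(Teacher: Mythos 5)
Your proof is correct. The paper proves the lemma by a bare-hands cocycle correction: given a closed $\alpha\in V^k$, it picks $\beta_p$ with $[d\beta_p]=[\alpha]$ in $\gr^p V$, replaces $\alpha$ by $\alpha_{p+1}=\alpha-d\beta_p\in \mF^{p+1}V^k$, and iterates until boundedness forces the remainder to vanish, so that $\alpha=d\sum_i\beta_i$. Your descending induction on $H(\mF^pV)$ via the long exact sequence of $0\to \mF^{p+1}V\to \mF^pV\to \gr^pV\to 0$ is the structural repackaging of exactly that computation: the surjectivity of $H^{k}(\mF^pV)\to H^k(\gr^pV)$ being trivial and the vanishing of $H(\gr^pV)$ together produce the same correction terms $\beta_p$. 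Both arguments use boundedness only to terminate the induction, and both avoid any convergence issues that a general (unbounded) spectral sequence argument would raise. The paper's version has the minor advantage of being self-contained at the level of elements; yours is shorter and makes the role of the short exact sequences explicit. Either is a complete proof.
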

\begin{proof}
	Without loss of generality we assume that the filtration is descending and therefore $gr^pV^k=\mF^pV^k/\mF^{p+1}V^k$. Let $\alpha\in V^k$ with $d\alpha=0$, we show that $\exists \beta \in V^{k-1}$ with $\alpha=d\beta$.
	 
	Since the filtration is bounded $\exists p > 0$ such that $\alpha \in \mF^pV^k$. Due to the assumption $\exists \beta_p \in \mF^pV^{k-1}$ such that $[d\beta_p]=[\alpha]$ in $gr^pV^k$. We define $\alpha_{p+1}:=\alpha-d\beta_p$ with $\alpha_{p+1}\in \mF^{p+1}V^k$ and $d\alpha_{p+1}=0$.
	
	If we repeat this step inductively we reach a $N>0$ such that $\alpha_N:= \alpha_{N-1}-d\beta_{N-1}\in \mF^{N}V^k=0$ because the filtration is bounded. Hence, $\alpha = \sum_{i=p}^Nd\beta_i=d\sum_{i=p}^N\beta_i$.
	\end{proof}

\begin{lemma}
\label{lem:homotopy}
	Let $(V,d)$ be a dg vector space and let $h:V\to V$ be a linear map of degree $-1$ such that $d h+h d = \mathds{1} $ is the identity.
	Then $(V,d)$ is acyclic. 
	\hfill\qed
\end{lemma}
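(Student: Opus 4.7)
The plan is to apply the homotopy identity directly to any cocycle. Let $v \in V$ be a cocycle, that is, $dv = 0$. The aim is to exhibit $v$ as a coboundary, which would immediately give $H(V,d)=0$.

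First I would apply the given identity $dh + hd = \mathds{1}$ to $v$:
\[
v = \mathds{1}(v) = (dh + hd)(v) = d(h(v)) + h(d v).
\]
Since $dv = 0$ by assumption, the second term vanishes, yielding
\[
v = d(h(v)).
\]
This shows $v$ lies in the image of $d$, hence every cocycle is a coboundary, so $(V,d)$ is acyclic.

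The only subtlety is purely bookkeeping: one should check that signs work out, but with the stated convention $dh + hd = \mathds{1}$ there is no sign to worry about, so the computation is immediate. There is no real obstacle; this is the standard contracting homotopy argument.
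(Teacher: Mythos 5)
Your argument is correct and is exactly the standard contracting-homotopy computation; the paper omits the proof entirely (the lemma is stated with an immediate \qed as a standard fact), and your two-line verification $v=(dh+hd)(v)=d(h(v))$ for a cocycle $v$ is all that is needed.
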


\subsection{Representations of $\GL_g$}
Let $V_g$ denote the defining representation of $\GL_g$ and $V_g^*$ the dual representation. We say that a $\GL_g$-representation is of order $r$ if it is a direct sum of sub-quotients of representations of the form  $V_g^k \otimes \left(V_g^*\right)^l$ with $k+l\leq r$. 

\begin{lemma}
\label{lem:rep_gl_g}
Let V be a $\GL_g$-representation of order $r$. If 
\[
\left(V\otimes V_g^{\otimes k}\otimes \left(V_g^*\right)^{\otimes l}\right)^{GL_g}=0 \quad\forall k,l \text{ such that } k+l\leq r,
\]
then we have $V=0$.
\end{lemma}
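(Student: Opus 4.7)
The plan is to argue by contradiction: assume $V\neq 0$ and produce some $k_0,l_0$ with $k_0+l_0\leq r$ for which $\left(V\otimes V_g^{\otimes k_0}\otimes (V_g^*)^{\otimes l_0}\right)^{\GL_g}$ is nonzero. The key input is Weyl's complete reducibility theorem for rational $\GL_g$-representations in characteristic zero, which both lets us replace ``sub-quotient'' by ``direct summand'' in the definition of order $r$, and ensures that every nonzero rational representation decomposes as a direct sum of irreducibles.

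First I would pick a nonzero irreducible summand $U\subset V$ and, using the definition of order $r$, fix integers $k_0, l_0$ with $k_0+l_0\leq r$ such that $U$ sits as a direct summand inside $V_g^{\otimes k_0}\otimes (V_g^*)^{\otimes l_0}$. Dualizing, $U^*$ becomes a direct summand of the dual representation $(V_g^*)^{\otimes k_0}\otimes V_g^{\otimes l_0}$, which is canonically isomorphic as a $\GL_g$-representation to $V_g^{\otimes l_0}\otimes (V_g^*)^{\otimes k_0}$ after reordering tensor factors.

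Next I would produce the contradicting invariant element. The identity map $\mathrm{id}_U\in U\otimes U^*\cong \End(U)$ is $\GL_g$-invariant and nonzero, so $(U\otimes U^*)^{\GL_g}\neq 0$. Since $U\hookrightarrow V$ and $U^*\hookrightarrow V_g^{\otimes l_0}\otimes (V_g^*)^{\otimes k_0}$ are both inclusions as direct summands, tensoring yields an inclusion as a direct summand
\[
U\otimes U^* \hookrightarrow V\otimes V_g^{\otimes l_0}\otimes (V_g^*)^{\otimes k_0}.
\]
Taking $\GL_g$-invariants preserves direct summands, so the right-hand side contains a nonzero invariant, contradicting the hypothesis since $l_0+k_0\leq r$.

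The only conceivable obstacle is technical bookkeeping around duality, the reordering of tensor factors, and the passage from sub-quotients to direct summands; all of these are standard consequences of complete reducibility in characteristic zero, so no serious hurdle is anticipated.
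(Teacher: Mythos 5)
Your proof is correct and follows essentially the same route as the paper: both arguments use complete reducibility to turn the nonzero (sub)quotient condition into a nonzero $\GL_g$-invariant element of $V\otimes V_g^{\otimes l_0}\otimes (V_g^*)^{\otimes k_0}$, the paper by exhibiting a nonzero equivariant surjection $V_g^{\otimes k_0}\otimes (V_g^*)^{\otimes l_0}\to U/W$ and you by taking the identity endomorphism of an irreducible direct summand. The two constructions produce the same invariant up to the canonical identification of the invariant space with a $\Hom_{\GL_g}$-space, so there is nothing further to compare.
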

\begin{proof}
We may assume that $V\neq 0$ is a subquotient of some $V_g^{\otimes k}\otimes \left(V_g^*\right)^{\otimes l}$, i.e., $V=U/W$ with
\[
	W\subsetneq U \subset  V_g^{\otimes k}\otimes \left(V_g^*\right)^{\otimes l} 
\]
some $\GL_g$-invariant subspaces. Then it suffices to check that 
\begin{equation}\label{equ:lem rep glg proof}
\left(V\otimes V_g^{\otimes l}\otimes \left(V_g^*\right)^{\otimes k}\right)^{GL_g}
= 
\Hom_{\GL_g}\left(V_g^{\otimes k}\otimes \left(V_g^*\right)^{\otimes l}, V\right)\neq 0.
\end{equation} 
Use the complete reducibility of $V_g^{\otimes k}\otimes \left(V_g^*\right)^{\otimes l}$ to pick a $\GL_g$-invariant complement $U'$ of $U$. For example $U'=U^\perp$ the orthogonal complement with respect to the standard inner product will do.
Then the composition of the natural projections 
\[
V_g^{\otimes k}\otimes \left(V_g^*\right)^{\otimes l}
= U\oplus U' \to U \to U/W
\]
is surjective and hence nonzero, and therefore is a non-zero element in \eqref{equ:lem rep glg proof}.
\end{proof}

\section{Graph complexes}\label{sec:GCs}

In this section we give a combinatorial definition of the graph complexes $\G_{(g),1}$, $\GC_{(g),1}:=\G_{(g),1}^*$ and $ \fG_{(g),1}\cong S^+(\G_{(g),1}[-1])$ considered in this paper. 
These complexes or close variants therof have appeared at other places in the literature \cite{CamposWillwacher,Matteo}.

We say that a directed graph with $n$ vertices and $k$ edges is an ordered set of $k$ pairs $(i,j)$ of numbers $i,j\in \{1,\dots,n\}$. These $k$ pairs are the edges of the graph, with the edge $(i,j)$ pointing from vertex $i$ to vertex $j$. We do not ask that the graph is connected.
We denote the set of such graphs with $n$ vertices and $k$ edges by $\gra_{n,k}$. This set carries an action of the group $S_n\times (S_2 \wr S_k)$, with $S_n$ acting by renumbering the vertices, $S_k$ by reordering the edges, and $S_2$ by changing the directions of the edges, i.e., by flipping the two members of the pairs. 

Let furthermore $V$ be any finite dimensional graded vector space and $m$ an integer.
We then define a graded vector space of coinvariants
\begin{equation}\label{equ:fGdef}
  \fG_{V,2m+1} := \bigoplus_{n\geq 1, k\geq 0}
  \left( \Q\gra_{n,k}[2mk]\otimes \Q[2m+1]^{\otimes n} \otimes sgn_{S_2} ^{\otimes k} 
  \otimes (S(V))^{\otimes n} \right)_{S_n\times (S_2 \wr S_k)}. 
\end{equation}
Here the group $S_n$ acts diagonally on the vector space $\Q\gra_{n,k}$ generated by the set $\gra_{n,k}$ and the $n$ symmetric product factors $S(V)$ as well as by permuting the factors $\Q[2m+1]$, thus producing Koszul signs. $S_k$ acts on $\Q\gra_{n,k}$.
We may interpret elements of $\fG_{V,2m+1}$ as linear combinations of isomorphism classes of graphs, whose vertices may be decorated by zero or more elements of $V$. Additionally, such graphs come with an ordering of the vertices and the decorations as well as with an orientation of the edges. We identify two such orderings and orientations up to sign. 

We say that the valence of a vertex in a graph is the number of incident half-edges, plus the number of decorations.
E.g., a vertex $v$ with $3$ incident half-edges and a decoration in $S^2(V)$ has valence $5$.
\[
\begin{tikzpicture}
\node[int, label=90:{$\alpha\beta$}] (v) at (0,0) {};
\draw (v) edge +(-.5, -.5) edge +(0,-.5) edge +(.5,-.5);
\end{tikzpicture}
\quad \quad \alpha, \beta\in V.
\]

In the following we restrict ourselves to the subcomplex of $\fG_{V,2m+1}$ spanned by graphs all of whose vertices are at least trivalent and use the notation $\fG_{V,2m+1}^{\geq 3}$. 
We define on $\fG_{V,2m+1}^{\geq 3}$ a differential $d_c$ such that for a graph $\Gamma\in \fG_{V,2m+1}$
\[
  d_c \Gamma = \sum_{e=(i,j)\in E\Gamma} (-1)^j \Gamma / e,
\]
with the sum running over all edges $e$ in the edge set $E\Gamma$, and obtain the graph $\Gamma/e$ by contracting the edge $e$.
The operation of contracting some edge $e=(i,j)$ is precisely the following.
If $e$ is a tadpole edge, i.e., $i=j$ then the graph is zero by symmetry and we set $\Gamma/e=0$. 
Otherwise we remove the edge $e$ from the list of edges and the vertex $j$ from the set of vertices, replace all occurrences of $j$ by $i$ in the list of edges and renumber the vertices again by $1,2,\dots,n-1$, keeping their relative order. Finally we alter the decorations by applying the natural map 
  \[
  S(V)^{\otimes n}\to  S(V)^{\otimes n-1} 
  \] 
  by multiplying the $j$-th factor into the $i$-th factor, using the commutative product on the symmetric algebra $S(V)$. This operation implicitly includes Koszul signs for permuting the $j$-th factor to the $i$-th position.

%   We note that the operation $d_c$ is indeed well defined, i.e., it is defined on the quotient by the symmetric group actions \eqref{equ:fGdef}.
%   For example, if $\tau_e$ is the transposition changing the direction of some edge $e=(i,j)$ to $\bar e=(j,i)$, then 

% We note that $(\fG_{V,2m+1}^{\geq 3},d_c)$ is naturally a graded commutative algebra, with the product given by disjoint union of graphs.

Next we restrict to the case considered in this paper where $V=\bar H^\bullet(W_{g,1})$. We will denote the corresponding graded vector space by 
\[
	\fG_{(g),1} := \fG^{\geq 3}_{\bar H^\bullet(W_{g,1}),2m+1}.
\]
Note that we have a canonical "diagonal" element $\Delta \in S^2(\bar{H}^\bullet(W_{g,1}))$ of degree $2m+1$.
Let $\{\alpha_1,\dots, \alpha_g,\beta_1,\dots,\beta_g\}$ denote a basis of $\bar{H}^\bullet(W_{g,1})$ such that the Poincar\'e duality pairing is $\langle \alpha_i,\beta_j\rangle = \delta_{ij}$, with all other pairings zero. 
Then the canonical diagonal element is given by
\[
\Delta = (-1)^m \sum_{i=1}^g (\alpha_i\otimes \beta_i - \beta_i\otimes \alpha_i).
\]

We then define a second differential $d_{cut}$ on a graph $\Gamma$ with $|V\Gamma|$ vertices such that 
\[
d_{cut} \Gamma = (-1)^{|V\Gamma|} \sum_{e\in E\Gamma} \cut(\Gamma,e)   
\]
where the sum is again over edges and $\cut(\Gamma,e)$ is defined as follows:
If we cut the edge $e=(i,j)$, then $\cut(\Gamma,e)$ is obtained by removing $e$, and multiplying $\Delta$ from the left into $S(V)^{\otimes n}$, at positions $i$ and $j$ of the tensor product. There is again an implicit Koszul sign corresponding to the permutation of the two factors of $\Delta\in S(V)^{\otimes 2}$ to the $t$-th and $j$-th position.

Finally, the total differential on the considered graph complex $\fG_{(g),1}$ with all vertices at least trivalent reads as
\[
d = d_c + d_{cut}.
\]
\begin{lemma}
The complex $(\fG_{(g),1},d)$ with all vertices at least trivalent is well-defined, in particular $d^2=0$. 
\end{lemma}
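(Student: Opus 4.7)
The plan is to establish two facts: (i) that $d_c$ and $d_{cut}$ preserve the at-least-trivalent subspace and descend to the coinvariants, and (ii) that $(d_c+d_{cut})^2=0$.

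For (i), I would first verify that the trivalence condition is preserved. Under $d_c$, contracting a non-loop edge between two at-least-trivalent vertices produces a single vertex of valence at least $3+3-2=4$, while tadpoles vanish by symmetry and so cause no issue. Under $d_{cut}$, cutting an edge exchanges one half-edge at each endpoint for one decoration at each endpoint, so the valence of each endpoint is preserved; all other valences are untouched. Well-definedness on the coinvariants is then a routine check that the prescribed signs in $d_c$ and $d_{cut}$ match those induced by the $S_n\times (S_2\wr S_k)$ action and the Koszul rule. The degree count (each $d_c$- or $d_{cut}$-step decreases the edge count by one, and $d_c$ also decreases the vertex count by one while combining decorations, so both have degree $+1$) is also straightforward bookkeeping.

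For (ii) I would expand $d^2 = d_c^2+(d_cd_{cut}+d_{cut}d_c)+d_{cut}^2$ and show each piece vanishes independently. The identity $d_c^2=0$ is the standard argument for the Kontsevich-type contracting differential: in the resulting sum over ordered pairs $(e_1,e_2)$ of distinct edges, the two orders of a given pair contract to the same graph, and one checks that the Koszul signs from edge-reordering are opposite (care is needed when the edges share a vertex, but this is handled by associativity of the product in $S(V)$ used to combine decorations). For $d_{cut}^2=0$, the crucial point is that the canonical diagonal $\Delta$ has \emph{odd} total degree $2m+1$; consequently, swapping the two copies of $\Delta$ inserted by two successive cuts produces a sign $(-1)^{(2m+1)^2}=-1$, which, combined with the edge-reordering signs, makes the $(e_1,e_2)$ and $(e_2,e_1)$ contributions cancel.

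The main obstacle, and the raison d'\^etre of the prefactor $(-1)^{|V\Gamma|}$ in the definition of $d_{cut}$, is the mixed identity $d_cd_{cut}+d_{cut}d_c=0$. For two distinct edges $e_1,e_2$, the underlying decorated graph obtained by cutting $e_1$ and then contracting $e_2$ coincides with that obtained by first contracting $e_2$ and then cutting $e_1$. The point is that $d_c$ decreases $|V\Gamma|$ by one, so the prefactor $(-1)^{|V\Gamma|}$ in the subsequent $d_{cut}$-step has \emph{opposite} sign in the two orders; this is precisely the extra minus sign needed to make the two orders cancel. I expect the main subtlety in carrying out this argument to be the Koszul sign bookkeeping in the case where $e_1$ and $e_2$ share a vertex, in which the decorations inserted by the cut must be transported through the contracting multiplication in $S(V)$; a careful case-by-case analysis (disjoint edges; edges sharing one vertex; edges sharing both vertices) will show that the remaining signs agree in both orders, so that the single sign flip coming from $|V\Gamma|$ closes the argument.
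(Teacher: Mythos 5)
Your proposal is correct and follows essentially the same route as the paper: expand $d^2$ into the three pieces $d_c^2$, $d_{cut}^2$ and the mixed term, use the odd degree $2m+1$ of the diagonal $\Delta$ to cancel $d_{cut}^2$ by antisymmetry, and use the fact that contraction drops the vertex count by one so that the prefactor $(-1)^{|V\Gamma|}$ in $d_{cut}$ produces the relative sign killing $d_cd_{cut}+d_{cut}d_c$ — exactly the mechanism in the paper's proof. The only difference is that you additionally spell out the (easy) preservation of the trivalence condition, which the paper leaves implicit.
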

\begin{proof}
	We need to check separately that 
	\begin{align*}
		d_c^2&=0 &
		d_{cut}^2&=0 &
		d_cd_{cut}+d_{cut}d_c&=0.
	\end{align*}
First, we compute 
	\begin{align*}
		d_c^2\Gamma 
		=
		\sum_{f=(f_1,f_2)\in E\Gamma \atop 
		f\neq e}
		\sum_{e=(e_1,e_2)\in E\Gamma}
		(-1)^{e_2 + \tilde f_2}
		(\Gamma / e) / \tilde f
	\end{align*}
where we denote by $\tilde f=(\tilde f_1,\tilde f_2)$ the edge in the graph $\Gamma / e$ that corresponds to $f\in E\Gamma$.
Similarly, let us denote by $\tilde e=(\tilde e_1,\tilde e_2)$ the edge in $\Gamma / f$ corresponding to $e\in E\Gamma$.
Then the fact that $d_c^2=0$ follows immediately from the following claim by antisymmetry under the interchange of $e$ and $f$.

\medskip 

{\bf Claim:} We have 
\[
(-1)^{e_2 + \tilde f_2}
(\Gamma / e) / \tilde f
	=
-(-1)^{f_2 + \tilde e_2}
	(\Gamma / f) / \tilde e.
\]
To show the claim, we distinguish two cases:

\begin{itemize}
\item $e_2\neq f_2$: 
Without loss of generality we can assume $e_2>f_2$, otherwise interchange the roles of $e$ and $f$.
In this case $\tilde e_2= e_2-1$ and $\tilde f_2=f_2$.
Hence $(-1)^{e_2 + \tilde f_2} = -(-1)^{f_2 + \tilde e_2}$.
Furthermore, $(\Gamma / e) / \tilde f= (\Gamma / f) / \tilde e$, so that the claim holds.
\item $e_2= f_2$:
Both edges have the same endpoint $e_2$:
\[
\begin{tikzpicture}	
	\node[int,label=90:{$e_1$}] (e1) at (0,0){};
\node[int,label=-90:{$e_2=f_2$}] (e2) at (1.2,0){};
\node[int,label=90:{$f_1$}] (f1) at (2.4,0){};
\draw[-latex] (e1) edge node[above]{$e$} (e2) (f1) edge node[above]{$f$} (e2);
\end{tikzpicture}
\]

We may assume that the starting points of the edges are different, $e_1\neq f_2$, since otherwise $(\Gamma / e) / \tilde f=(\Gamma / f) / \tilde e=0$.
We furthermore have $(\tilde e_1,\tilde e_2)=(\tilde f_2,\tilde f_1)$. 
Also note that contracting edges $e$ and $f$ in either order produces isomorphic graphs. However, the new vertex (the image of $e_1,e_2$ and $f_2$) inherits its position in the ordering of vertices either from $e_1$ or $f_1$. Hence 
\[
	(\Gamma / e) / \tilde f = -(-1)^{\tilde e_2 -\tilde e_1}	(\Gamma / f) / \tilde e
\]
and thus
\[
	(-1)^{e_2+\tilde f_2}(\Gamma / e) / \tilde f
	=
	(-1)^{f_2+\tilde e_1}
	(\Gamma / e) / \tilde f
	=
	-(-1)^{f_2 +\tilde e_1 + \tilde e_2-\tilde e_1}	(\Gamma / f) / \tilde e
	= -(-1)^{f_2 + \tilde e_2}
	(\Gamma / f) / \tilde e
\]
as desired, showing the claim.
\end{itemize}

Next note that for $e\neq f\in E\Gamma$:
\[
	cut(cut(\Gamma,e),f) =
	-cut(cut(\Gamma,f),e),
\]
with the sign produced by the Koszul sign rule for multiplying the to copies of $\Delta$ into the appropriates positions in $S(V)^{\otimes n}$.
Hence we trivially have 
\[
d_{cut}^2 \Gamma 
=
\sum_{e\neq f\in E\Gamma}cut(cut(\Gamma,e),f)
=0
\]
by symmetry under interchange of $e$ and $f$.

Finally, we have, again with the same notation as above,
\[
cut(\Gamma/e,\tilde f) =
cut(\Gamma,f)/e.
\]
Hence 
\begin{align*}
	d_{cut}d_c \Gamma &= 
\sum_{e=(i,j)\in E\Gamma}
\sum_{f\in E\Gamma\atop f\neq e}
(-1)^{|V\Gamma/e|}(-1)^j 
cut(\Gamma/e,\tilde f)
\\&=
-
\sum_{f\in E\Gamma}
\sum_{e=(i,j)\in E\Gamma \atop e\neq f}
(-1)^{|V\Gamma|}(-1)^j 
cut(\Gamma,f)/e
\\&=  -d_{c}d_{cut} \Gamma
\end{align*}

\end{proof}

Suppose that $\Gamma$ is a graph in the graph complex $\fG_{(g),1}$, with $e$ edges, $v$ vertices, $a$ decorations in $\bar H^m(W_{g,1})$ and $b$ decorations in $\bar H^{m+1}(W_{g,1})$. Then the cohomological degree of $\Gamma$ is 
\[
k=2m e - (2m+1) v +ma + (m+1)b.  
\]
With $D:=a+b$ the total number of decorations we furthermore define the \emph{weight} of $\Gamma$ as 
\[
W=2(e-v) +D.
\]
The imbalance is defined as 
$$
H=a-b.
$$ 
The $\Z\times \Z$-grading with respect to the weight $W$ and imbalance $H$ is preserved by the differential. Hence, the graph complex splits into a direct sum of pieces of fixed weight and imbalance.

Next, note that the graph complex $\fG_{(g),1}$ is equipped with a commutative product by taking disjoint union of graphs. This product is compatible with the differential and the weight and imbalance gradings, since the weight and imbalance are additive.
Furthermore, any graph splits uniquely (up to reordering) as a union of connected graphs. It follows that our graph complex is quasi-free as a (non-unital) graded commutative algebra, so that we can write 
\[
  \fG_{(g),1}\cong S^+(\G_{(g),1}[-1]).
\]
Here we denoted by $\G_{(g),1}[-1]\subset \fG_{(g),1}$ the subcomplex spanned by connected graphs.
Since the differential is compatible with the commutative algebra structure (i.e., a derivation), it endows the degree shifted connected part $\G_{(g),1}$ with a dg Lie coalgebra structure.
We shall denote the dual dg Lie algebras by 
\[
  \GC_{(g),1}:=\G_{(g),1}^* 
\]
and the dual differential by
\[
\delta = d^* = d_c^* + d_{cut}^* = \delta_{split} + \delta_{glue}.
\]
The combinatorial expressions for the different pieces of the (dual) differential are given in the introduction.

\section{Generators and Relations of the Cohomology in terms of irreducible Representations of $\GL_g$}
\label{sec:representations}

In this section we express the generators and relations of the non vanishing cohomology in therms of irreducible representations of $GL_g$ for large genera. Let $V_{g}=\Q^g$ the $g$-dimensional defining representation of $\GL_g$, then we have that $H^m(W_{g,1})\cong V_g$ and $H^{m+1}(W_{g,1})\cong V_g^*$, as $\GL_g$-representations. Furthermore let $V_{(\lambda_1,\lambda_2,\dots,\lambda_g)}$ denote the irreducible representation of $\GL_g$ corresponding to the non-increasing sequence of integers $\lambda_1\geq\lambda_2\geq\dots\geq\lambda_g$ with $\lambda_i\in\Z$, see \cite[\S 15.5]{FultonHarris}. In this notation we have $V_g=V_{(1,0,\dots)}$ and $V_g^*=V_{(0,\dots,0,-1)}$. In the following
\[
H = \text{\#decorations in $H^m\left(W_{g,1}\right)$} - \text{\#decorations in $H^{m+1}\left(W_{g,1}\right)$}
\]
denotes again the imbalance. Furthermore, we assume that $g\geq 6$ for simplicity. 

\subsection{Decomposition of $\gr^1H\left(\GC_{(g),1}\right)$}
First we consider $\gr^1H\left(\GC_{(g),1}\right)$ which corresponds to graphs composed of a single internal vertex with three decorations in $V_g \oplus V_g^*$
\[
  	\begin{tikzpicture}
  		\node[int] (v1) at (0,0) {};
  		\node at (0:0.4) {$\alpha$};
  		\node at (120:0.4) {$\beta$};
  		\node at (-120:0.4) {$\gamma$};
  	\end{tikzpicture}
  	\quad \quad \text{with} \quad \alpha,\beta,\gamma \in V_g \oplus V_g^*.
\]

The following table shows the decomposition of $\gr^1H\left(\GC_{(g),1}\right)$ in terms of irreducible representations of $GL_g$.
\begin{align}
\begin{tabular}{|l|l|l|}
 \hline
 H & m even & m odd \\
 \hline
 3 & $S^3(V)=V_{(3,0,\dots,0)}$ & $\wedge^3(V)=V_{(1,1,1,0,\dots,0)}$\\
 \hline
 1 & $S^2(V)\otimes V^*=V\oplus V_{(2,0,\dots,0,-1)}$ & $\wedge^2(V)\otimes V^*=V\oplus V_{(1,1,0,\dots,0,-1)}$\\
 \hline
 -1 & $V\oplus\wedge^2(V^*)=V^*\oplus V_{(1,0,\dots,0,-1,-1)}$ & $V\otimes S^2(V^*)=V^*\oplus V_{(1,0,\dots,0,-2)}$ \\
 \hline
 -3 & $\wedge^3(V^*)=V_{(0,\dots,0,-1,-1,-1)}$ & $S^3(V^*)=V_{(0,\dots,0,-3)}$\\
 \hline
\end{tabular}
\label{table:generators}
\end{align}

\subsection{Decomposition of $\gr^2H\left(\GC_{(g),1}\right)$}
In a second step we consider $\gr^2\left(\GC_{(g),1}\right)$. There are 4 possible types of graphs contributing to weight 2, with at most 2 vertices:
\begin{align*}
	A&= \begin{tikzpicture}
		\node[int] (v1) at (0,0) {};
	  \node[int] (v2) at (1,0) {};
	  \draw (v1) edge (v2) edge[bend left] (v2) edge[bend right] (v2);
	\end{tikzpicture}
	&
	B&=
	\begin{tikzpicture}
		\node[int,label=180:{$\alpha$}] (v1) at (0,0) {};
	  \node[int,label=0:{$\beta$}] (v2) at (1,0) {};
	  \draw (v1) edge[bend left] (v2) edge[bend right] (v2);
	\end{tikzpicture}
	& C&=
  	\begin{tikzpicture}
  		\node[int] (v1) at (0,0) {};
  		\node at (45:0.4) {$\alpha$};
  		\node at (135:0.4) {$\beta$};
  		\node at (-135:0.4) {$\gamma$};
  		\node at (-45:0.4) {$\delta$};
  	\end{tikzpicture}
  	&D&=
	\begin{tikzpicture}
  		\node[int] (v1) at (0,0) {};
  		\node at (0,0.4) {$\alpha$};
  		\node at (0,-0.4) {$\beta$};
		\node[int] (v2) at (1,0) {};
		\node at (1,0.4) {$\gamma$};
  		\node at (1,-0.4) {$\delta$};
		\draw (v1) edge (v2);
  	\end{tikzpicture}
  	\quad \quad \text{with }\quad \alpha,\beta,\gamma,\delta \in V_g \oplus V_g^*.
\end{align*}

% which corresponds to two sorts of graphs, a single internal vertex with four decorations in $V_g \oplus V_g^*$ or two vertices connected by a solid edge, where each vertex has two decorations in $V_g \oplus V_g^*$

We decompose the corresponding subspaces of $\gr^2\GC_{(g),1}$ into irreducible representations of $GL_g$.

First, graph $A$ above clearly contributes one copy of the trivial representation 
\begin{equation}\label{equ:triv rep}
	V_{0,\dots,0} \quad\quad \text{in imbalance $H=0$}.
\end{equation}

The graphs of type $B$ contribute as follows:
\begin{align}
	\begin{tabular}{|l|l|l|}
	 \hline
	 H & m even & m odd \\
	 \hline
	 2 & $\wedge^2(V)=V_{(1,1,0,\dots,0)}$ & $S^2(V)=V_{(2,0,\dots,0)}$\\
	 \hline
	 0 & $V\otimes V^*=V_{(1,0,\dots,0,-1)}\oplus V_{(0,\dots,0)}$ & $V\otimes V^*=V_{(1,0,\dots,0,-1)}\oplus V_{(0,\dots,0)}$\\
	 \hline
	 -2 & $S^2(V^*)=V_{(0,\dots,0,-2)}$ & $\wedge^2(V^*)=V_{(0,\dots,0,-1,-1)}$ \\
	 \hline
	\end{tabular}
	\label{table:2vertex2edge}
\end{align}

The following table shows the contributions from graphs of type $C$: 

\begin{align}
\begin{tabular}{|l|l|l|}
 \hline
 H & m even & m odd \\
 \hline
 4 & $S^4(V)=V_{(4,0,\dots,0)}$ & $\wedge^4(V)=V_{(1,1,1,1,0,\dots,0)}$\\
 \hline
 2 & $S^3(V)\otimes V^*=V_{(2,0,\dots,0)}\oplus V_{(3,0,\dots,0,-1)}$ & $\wedge^3(V)\otimes V^*=V_{(1,1,0,\dots,0)}\oplus V_{(1,1,1,0,\dots,0,-1)}$\\
 \hline
 0 & $S^2(V)\otimes\wedge^2(V^*)=V_{(1,0,\dots,0,-1)}\oplus V_{(2,0,\dots,0,-1,-1)}$ & $\wedge^2(V)\otimes S^2(V^*)=V_{(1,0,\dots,0,-1)}\oplus V_{(1,1,0,\dots,0,-2)}$\\
 \hline
 -2 & $V\oplus\wedge^3(V^*)=V_{(0,\dots,0,-1,-1)}\oplus V_{(1,0,\dots,0,-1,-1,-1)}$ & $V\otimes S^3(V^*)=V_{(0,\dots,0,-2)}\oplus V_{(1,0,\dots,0,-3)}$ \\
 \hline
 -4 & $\wedge^4(V^*)=V_{(0,\dots,0,-1,-1,-1,-1)}$ & $S^4(V^*)=V_{(0,\dots,0,-4)}$\\
 \hline
\end{tabular}
\label{table:1vertex}
\end{align}

Finally, we have the contributions from graphs of type $D$: 

\begin{align}
\begin{tabular}{|l|l|l|}
 \hline
 H & m even & m odd \\
 \hline
 4 & $S^2(S^2(V))=V_{(2,2,0,\dots,0)}\oplus V_{(4,0,\dots,0)}$ 
 & $S^2(\wedge^2(V))=V_{(1,1,1,1,0,\dots,0)}\oplus V_{(2,2,0,\dots,0)}$\\
 \hline
 2 & 
 $\begin{array}{l}
 S^2(V)\otimes(V\otimes V^*)\\
 =V_{(1,1,0,\dots,0)}\oplus 2V_{(2,0,\dots,0)}\\
 \quad \oplus V_{(2,1,0,\dots,0,-1)}\oplus V_{(3,0,\dots,0,-1)}
 \end{array}$
 & 
 $\begin{array}{l}
 \wedge^2(V)\otimes(V\otimes V^*)\\
 =2V_{(1,1,0,\dots,0)}\oplus V_{(1,1,1,0,\dots,0,-1)}\\
 \quad \oplus V_{(2,0,\dots,0)}\oplus V_{(2,1,0,\dots,0,-1)} 
 \end{array}$\\
 \hline
 0 & 
 $\begin{array}{l}
 S^2(V)\otimes\wedge^2(V^*)\oplus \wedge^2(V\otimes V^*)\\
 =V_{(1,0,\dots,0,-1)}\oplus V_{(2,0,\dots,0,-1,-1)} \\
 \quad \oplus 2V_{(1,0,\dots,0,-1)}\oplus V_{(1,1,0,\dots,0,-2)}\oplus V_{(2,0,\dots,0,-1,-1)}\\
 =3V_{(1,0,\dots,0,-1)}\oplus V_{(1,1,0,\dots,0,-2)}\oplus 2V_{(2,0,\dots,0,-1,-1)}
 \end{array}$
 & 
 $\begin{array}{l}
 \wedge^2(V)\otimes S^2(V^*)\oplus \wedge^2(V\otimes V^*)\\
 =V_{(1,0,\dots,0,-1)}\oplus V_{(1,1,0,\dots,0,-2)} \\
 \quad \oplus 2V_{(1,0,\dots,0,-1)}\oplus V_{(1,1,0,\dots,0,-2)}\oplus V_{(2,0,\dots,0,-1,-1)}\\
 =3V_{(1,0,\dots,0,-1)}\oplus 2V_{(1,1,0,\dots,0,-2)}\oplus V_{(2,0,\dots,0,-1,-1)}  
 \end{array}$\\
 \hline
 -2 &  
 $\begin{array}{l}
 (V \otimes V^*)\otimes \wedge^2(V^*)\\
 =2V_{(0,\dots,0,-1,-1)}\oplus V_{(1,0,\dots,0,-1,-1,-1)}\\
 \quad \oplus V_{(0,\dots,0,-2)}\oplus V_{(1,0,\dots,0,-1,-2)}
 \end{array}$
 & 
 $\begin{array}{l}
 (V \otimes V^*)\otimes S^2(V^*)\\
 =V_{(0,\dots,0,-1,-1)}\oplus 2V_{(0,\dots,0,-2)}\\
 \quad \oplus V_{(1,0,\dots,0,-1,-2)}\oplus V_{(1,0,\dots,0,-3)} 
 \end{array}$\\
 \hline
 -4 & $S^2(\wedge^2(V^*))=V_{(0,\dots,0,-1,-1,-1,-1)}\oplus V_{(0,\dots,0,-2,-2)}$ & $S^2(S^2(V^*))=V_{(0,\dots,0,-2,-2)}\oplus V_{(0,\dots,0,-4)}$\\
 \hline
\end{tabular}
\label{table:2vertices}
\end{align}

Recall that we assume here $g\geq 6$. Then Theorem \ref{thm:main cohom GC vanishing} states that the weight 2 cohomology of $\GC_{(g),1}$ is concentrated in the critical degree $k^L_{crit}(2,H)$. By inspection, only the graphs of type $D$ above live in the critical degree, while graphs of type $B$ and $C$ live in degree $k^L_{crit}(2,H)-1$ and graphs of type $A$ live in degree $k^L_{crit}(2,H)-2$.
We can hence compute the representation content of $\gr^2H\left(\GC_{(g),1}\right)$ by adding \eqref{equ:triv rep} and \eqref{table:2vertices}, and quotienting by \eqref{table:1vertex} and \eqref{table:2vertex2edge}.

% For example, the differential on $\gr^2\left(\GC_{(g),1}\right)$ is an injective map from the space spanned by graphs consisting of a single decorated vertex to the space spanned by graphs consisting of two connected decorated vertices

% \[
% 	\begin{tikzpicture}
%   		\node[int] (v1) at (0,0) {};
%   		\node at (0,0.4) {$\alpha$};
%   		\node at (0,-0.4) {$\beta$};
% 		\node[int] (v2) at (1,0) {};
% 		\node at (1,0.4) {$\gamma$};
%   		\node at (1,-0.4) {$\delta$};
% 		\draw (v1) edge (v2);
%   	\end{tikzpicture}
%   	\xhookleftarrow{\quad d \quad}
%   	\begin{tikzpicture}
%   		\node[int] (v1) at (0,0) {};
%   		\node at (45:0.4) {$\alpha$};
%   		\node at (135:0.4) {$\beta$};
%   		\node at (-135:0.4) {$\gamma$};
%   		\node at (-45:0.4) {$\delta$};
%   	\end{tikzpicture}
%   	\quad \quad \text{with }\quad \alpha,\beta,\gamma,\delta \in V_g \oplus V_g^*.
% \]

% Therefore, $\gr^2H\left(\GC_{(g),1}\right)$ consists of the $GL_g$-representations of table (\ref{table:2vertices}) quotient those of table (\ref{table:1vertex}). 

The following table then decomposes $\gr^2H\left(\GC_{(g),1}\right)$ in terms of irreducible representations of $GL_g$. 

\begin{align}
\begin{tabular}{|l|l|l|}
 \hline
 H & m even & m odd \\
 \hline
 4 & $V_{(2,2,0,\dots,0)}$ & $V_{(2,2,0,\dots,0)}$\\
 \hline
 2 & $V_{(2,0,\dots,0)}\oplus V_{(2,1,0,\dots,0,-1)}$ 
 & $V_{(1,1,0,\dots,0)}\oplus V_{(2,1,0,\dots,0,-1)}$\\ % V_{(1,1,0,\dots,0)}\oplus \oplus V_{(2,0,\dots,0)}
 \hline
 0 & $V_{(1,0,\dots,0,-1)}\oplus V_{(1,1,0,\dots,0,-2)}\oplus V_{(2,0,\dots,0,-1,-1)}$ 
 & $V_{(1,0,\dots,0,-1)}\oplus V_{(1,1,0,\dots,0,-2)}\oplus V_{(2,0,\dots,0,-1,-1)}$\\
 \hline
 -2 & $V_{(0,\dots,0,-1,-1)}\oplus V_{(1,0,\dots,0,-1,-2)}$ % \oplus V_{(0,\dots,0,-2)}
 & $V_{(0,\dots,0,-2)}\oplus V_{(1,0,\dots,0,-1,-2)}$\\
 %V_{(0,\dots,0,-1,-1)}\oplus 
 \hline
 -4 & $V_{(0,\dots,0,-2,-2)}$ & $V_{(0,\dots,0,-2,-2)}$\\
 \hline
\end{tabular}
\label{table:cohomology}
\end{align}

\subsection{Decomposition of the quadratic relations $R_{(g),1}$}

We can deduce the decomposition of the quadratic relations $R_{(g),1}$ in terms of irreducible representations of $GL_g$ by the following relation
\[
	\gr^2H\left(\GC_{(g),1}\right) = FreeLie^2\left(gr^1 H\left(GC_{(g),1}\right)\right) / R_{(g),1}.
\]

Let us first consider $FreeLie^2\left(gr^1H\left(GC_{(g),1}\right)\right) = \wedge^2\left(gr^1H\left(GC_{(g),1}\right)\right)$. We need to calculate the second exterior power of the space of generators listed in table (\ref{table:generators}). The following table decomposes $\wedge^2\left(gr^1H\left(GC_{(g),1}\right)\right)$ in terms of irreducible representations of $GL_g$.

\begin{align}
\begin{tabular}{|l|l|l|}
 \hline
 H & m even\\
 \hline
 6 & $V_{(3,3,0,\dots,0)}\oplus V_{(5,1,0,\dots,0)}$\\
 \hline
 4 & $\begin{array}{l}
	V_{(2,2,0,\dots,0)}\oplus 2V_{(3,1,0,\dots,0)}\oplus V_{(3,2,0,\dots,0,-1)}\\
	\oplus 2V_{(4,0,\dots,0)}\oplus V_{(4,1,0,\dots,0,-1)}\oplus V_{(5,0,\dots,0,-1)}
	\end{array}$\\
 \hline
 2 & $\begin{array}{l}
	2V_{(1,1,0,\dots,0)}\oplus 2V_{(2,0,\dots,0)}\oplus 3V_{(2,1,0,\dots,0,-1)}\\
	\oplus V_{(2,2,0,\dots,0,-1,-1)}\oplus 4V_{(3,0,\dots,0,-1)}\oplus V_{(3,1,0,\dots,-1,-1)}\\ 
	\oplus V_{(3,1,0,\dots,0,-2)}\oplus 2V_{(4,0,\dots,0,-1,-1)}
	\end{array}$\\
 \hline
 0 & $\begin{array}{l}
 	V_{(0,\dots,0)}\oplus 4V_{(1,0,\dots,0,-1)}\oplus 2V_{(1,1,0,\dots,0,-1,-1)}\\
 	\oplus V_{(1,1,0,\dots,0,-2)} \oplus 5V_{(2,0,\dots,0,-1,-1)}\oplus 2V_{(2,0,\dots,0,-2)}\\
 	\oplus V_{(2,1,\dots,0,-1,-1,-1)} \oplus V_{(2,1,\dots,0,-1,-2)}\oplus 2V_{(3,0,\dots,0,-1,-1,-1)}\\ \oplus V_{(3,0,\dots,0,-1,-2)}
 	\end{array}$\\
 \hline
 -2 & $\begin{array}{l}
 	 4V_{(0,\dots,0,-1,-1)} \oplus 4V_{(1,0,\dots,0,-1,-1,-1)}\oplus 3V_{(1,0,\dots,0,-1,-2)}\\
 	 \oplus V_{(1,1,0,\dots,0,-1,-1,-1,-1)}\oplus V_{(1,1,0,\dots,0,-2,-2)}\oplus V_{(2,0\dots,0,-1,-1,-1,-1)}\\
 	 \oplus 2V_{(2,0,\dots,0,-1,-1,-2)}  
 	\end{array}$\\
 \hline
 -4 & $\begin{array}{l}
 	2V_{(0,\dots,0,-1,-1,-1,-1)}\oplus 2V_{(0,\dots,0,-1,-1,-2)}\oplus V_{(0,\dots,0,-2,-2)}\\
 	\oplus V_{(1,\dots,0,-1,-1,-1,-1,-1)}\oplus  V_{(1,\dots,0,-1,-1,-1,-2)}\oplus V_{(1,\dots,0,-1,-2,-2)}
 	\end{array}$\\
 \hline
 -6 & $V_{(0,\dots,0,-1,-1,-1,-1,-1,-1)}\oplus V_{(0,\dots,0,-1,-1,-2,-2,)}$\\
 \hline
 \hline
 H & m odd \\
 \hline
 6 & $V_{(1,1,1,1,1,1,0,\dots,0)}\oplus V_{(2,2,1,1,0,\dots,0)}$\\
 \hline
 4 &  $\begin{array}{l}
 	2V_{(1,1,1,1,0,\dots,0)}\oplus 2V_{(2,1,1,0,\dots,0)}\oplus V_{(2,2,0,\dots,0)}\\
 	\oplus V_{(1,1,1,1,1,\dots,0,-1)}\oplus  V_{(2,1,1,1,\dots,0,-1)}\oplus V_{(2,2,1,\dots,0,-1)}
 	\end{array}$\\
 \hline
 2 & $\begin{array}{l}
 	 4V_{(1,1,0,\dots,0)} \oplus 4V_{(1,1,1,0,\dots,0,-1)}\oplus 3V_{(2,1,0,\dots,0,-1)}\\
 	 \oplus V_{(1,1,1,1,0,\dots,0,-1,-1)}\oplus V_{(2,2,0,\dots,0,-1,-1)}\oplus V_{(1,1,1,1,0\dots,0,-2)}\\
 	 \oplus 2V_{(2,1,1,0,\dots,0,-2)}  
 	\end{array}$\\
 \hline
 0 & $\begin{array}{l}
 	V_{(0,\dots,0)}\oplus 4V_{(1,0,\dots,0,-1)}\oplus 2V_{(1,1,0,\dots,0,-1,-1)}\\
 	\oplus V_{(2,0,\dots,0,-1,-1)}\oplus 5V_{(1,1,0,\dots,0,-2)}\oplus 2V_{(2,0,\dots,0,-2)}\\
 	\oplus V_{(1,1,1,\dots,0,-1,-2)}\oplus V_{(2,1,\dots,0,-1,-2)}\oplus 2V_{(1,1,1,0,\dots,0,-3)}\\
 	\oplus V_{(1,2,0,\dots,0,-3)}
 	\end{array}$\\
 \hline
 -2 & $\begin{array}{l}
	2V_{(0,\dots,0,-1,-1)}\oplus 2V_{(0,\dots,0,-2)}\oplus 3V_{(1,0,\dots,0,-1,-2)}\\
	\oplus V_{(1,1,0,\dots,0,-2,-2)}\oplus 4V_{(1,0,\dots,0,-3)}\oplus V_{(1,1,0,\dots,-1,-3)}\\ 
	\oplus V_{(2,0,\dots,0,-1,-3)}\oplus 2V_{(1,1,0,\dots,0,-4)}
	\end{array}$\\
 \hline
 -4 & $\begin{array}{l}
	V_{(0,\dots,0,-2,-2)}\oplus 2V_{(0,\dots,0,-1,-3)}\oplus V_{(1,0,\dots,0,-2,-3)}\\
	\oplus 2V_{(0,\dots,0,-4)}\oplus V_{(1,0,\dots,0,-1,-4)}\oplus V_{(1,0,\dots,0,-5)}
	\end{array}$\\
 \hline
 -6 & $V_{(0,\dots,0,-3,-3)}\oplus V_{(0,\dots,0,-1,-5)} $\\
 \hline
\end{tabular}
\label{table:freelie}
\end{align}

$R_{(g),1}$ consists of the $\GL_g$-representations of table (\ref{table:freelie}) quotient those of table (\ref{table:cohomology}). 
The following table hence decomposes $R_{(g),1}$ in terms of irreducible representations of $\GL_g$. 

\begin{align}
\begin{tabular}{|l|l|l|}
 \hline
 H & m even\\
 \hline
 6 & $V_{(3,3,0,\dots,0)}\oplus V_{(5,1,0,\dots,0)}$\\
 \hline
 4 & $\begin{array}{l}
	2V_{(3,1,0,\dots,0)}\oplus V_{(3,2,0,\dots,0,-1)}\oplus 2V_{(4,0,\dots,0)}\\
	\oplus V_{(4,1,0,\dots,0,-1)}\oplus V_{(5,0,\dots,0,-1)}
	\end{array}$\\
 \hline
 2 & $\begin{array}{l}
	2V_{(1,1,0,\dots,0)}\oplus V_{(2,0,\dots,0)}\oplus 2V_{(2,1,0,\dots,0,-1)}\\
	\oplus V_{(2,2,0,\dots,0,-1,-1)}\oplus 4V_{(3,0,\dots,0,-1)}\oplus V_{(3,1,0,\dots,-1,-1)}\\ 
	\oplus V_{(3,1,0,\dots,0,-2)}\oplus 2V_{(4,0,\dots,0,-1,-1)}
	\end{array}$\\
 \hline
 0 & $\begin{array}{l}
 	V_{(0,\dots,0)}\oplus 3V_{(1,0,\dots,0,-1)}\oplus 2V_{(1,1,0,\dots,0,-1,-1)}\\
 	\oplus 4V_{(2,0,\dots,0,-1,-1)} \oplus 2V_{(2,0,\dots,0,-2)}\oplus V_{(2,1,\dots,0,-1,-1,-1)}\\
 	\oplus V_{(2,1,\dots,0,-1,-2)}\oplus 2V_{(3,0,\dots,0,-1,-1,-1)}\oplus V_{(3,0,\dots,0,-1,-2)}
 	\end{array}$\\
 \hline
 -2 & $\begin{array}{l}
 	 3V_{(0,\dots,0,-1,-1)} \oplus 4V_{(1,0,\dots,0,-1,-1,-1)}\oplus 2V_{(1,0,\dots,0,-1,-2)}\\
 	 \oplus V_{(1,1,0,\dots,0,-1,-1,-1,-1)}\oplus V_{(1,1,0,\dots,0,-2,-2)}\oplus V_{(2,0\dots,0,-1,-1,-1,-1)}\\
 	 \oplus 2V_{(2,0,\dots,0,-1,-1,-2)}
 	\end{array}$\\
 \hline
 -4 & $\begin{array}{l}
 	2V_{(0,\dots,0,-1,-1,-1,-1)}\oplus 2V_{(0,\dots,0,-1,-1,-2)}\oplus V_{(1,\dots,0,-1,-1,-1,-1,-1)}\\
 	\oplus  V_{(1,\dots,0,-1,-1,-1,-2)}\oplus V_{(1,\dots,0,-1,-2,-2)}
 	\end{array}$\\
 \hline
 -6 & $V_{(0,\dots,0,-1,-1,-1,-1,-1,-1)}\oplus V_{(0,\dots,0,-1,-1,-2,-2,)}$\\
 \hline
 \hline
 H & m odd \\
 \hline
 6 & $V_{(1,1,1,1,1,1,0,\dots,0)}\oplus V_{(2,2,1,1,0,\dots,0)}$\\
 \hline
 4 &  $\begin{array}{l}
 	2V_{(1,1,1,1,0,\dots,0)}\oplus 2V_{(2,1,1,0,\dots,0)}\oplus V_{(1,1,1,1,1,\dots,0,-1)}\\
 	\oplus  V_{(2,1,1,1,\dots,0,-1)}\oplus V_{(2,2,1,\dots,0,-1)}
 	\end{array}$\\
 \hline
 2 & $\begin{array}{l}
 	 3V_{(1,1,0,\dots,0)} \oplus 4V_{(1,1,1,0,\dots,0,-1)}\oplus 2V_{(2,1,0,\dots,0,-1)}\\
 	 \oplus V_{(1,1,1,1,0,\dots,0,-1,-1)}\oplus V_{(2,2,0,\dots,0,-1,-1)}\oplus V_{(1,1,1,1,0\dots,0,-2)}\\
 	 \oplus 2V_{(2,1,1,0,\dots,0,-2)}
 	\end{array}$\\
 \hline
 0 & $\begin{array}{l}
 	V_{(0,\dots,0)}\oplus 3V_{(1,0,\dots,0,-1)}\oplus 2V_{(1,1,0,\dots,0,-1,-1)}\\
 	\oplus 4V_{(1,1,0,\dots,0,-2)}\oplus 2V_{(2,0,\dots,0,-2)}\oplus V_{(1,1,1,\dots,0,-1,-2)}\\
 	\oplus V_{(2,1,\dots,0,-1,-2)}\oplus 2V_{(1,1,1,0,\dots,0,-3)}\oplus V_{(1,2,0,\dots,0,-3)}
 	\end{array}$\\
 \hline
 -2 & $\begin{array}{l}
	2V_{(0,\dots,0,-1,-1)}\oplus V_{(0,\dots,0,-2)}\oplus 2V_{(1,0,\dots,0,-1,-2)}\\
	\oplus V_{(1,1,0,\dots,0,-2,-2)}\oplus 4V_{(1,0,\dots,0,-3)}\oplus V_{(1,1,0,\dots,-1,-3)}\\ 
	\oplus V_{(2,0,\dots,0,-1,-3)}\oplus 2V_{(1,1,0,\dots,0,-4)}
	\end{array}$\\
 \hline
 -4 & $\begin{array}{l}
	2V_{(0,\dots,0,-1,-3)}\oplus V_{(1,0,\dots,0,-2,-3)}\oplus 2V_{(0,\dots,0,-4)}\\
	\oplus V_{(1,0,\dots,0,-1,-4)}\oplus V_{(1,0,\dots,0,-5)}
	\end{array}$\\
 \hline
 -6 & $V_{(0,\dots,0,-3,-3)}\oplus V_{(0,\dots,0,-1,-5)} $\\
 \hline
\end{tabular}
\label{table:freelie}
\end{align} 

\section{Lower bound and proof of part (i) of Theorem \ref{thm:main cohom GC vanishing}}

\newcommand{\ppICG}{\mathsf{ppICG}}
\newcommand{\pICG}{\mathsf{pICG}}
\newcommand{\ICG}{\mathsf{ICG}}
\subsection{Kontsevich graph operad and degree bound on hairy graph complexes}
We consider the Kontsevich graph operad $\Graphs_n$ \cite{K2} and three dg sub-symmetric sequences
\[
  \ppICG_n[1]\subset \pICG_n[1] \subset \ICG_n[1] \subset \Graphs_n.
\]
Here $\Graphs_n(r)$ is a dg vector space spanned by graphs with $r$ numbered external and an arbitrary number of internal vertices.
The subspace $\ICG[1] \subset \Graphs_n$ is spanned by graphs that are internally connected, i.e., that are connected after removing all external vertices.
The sub-symmetric sequence $\pICG \subset \ICG$ is spanned by graphs all of whose external vertices have valence $\geq 1$. The sub-symmetric sequence $\ppICG \subset \pICG$ is spanned by graphs all of whose external vertices have valence $=1$. In particular, the resulting complex $\ppICG(r)$ is a complex of graphs with $r$ external legs (hairs) that are labeled by the numbers $1,\dots,r$. Let us give a few examples:
\begin{align*}
	\begin{tikzpicture}[baseline=-.65ex]
		\node[int] (v1) at (0:.5) {};
		\node[int] (v2) at (90:.5) {};
		\node[int] (v3) at (180:.5) {};
		\node[int] (v4) at (270:.5) {};
		\node[ext] (w1) at (0:1) {$\scriptstyle 1$};
		\node[ext] (w3) at (180:1) {$\scriptstyle 2$};
		\node[ext] (w4) at (270:1) {$\scriptstyle 3$};
		\node[ext] (w5) at (-45:1) {$\scriptstyle 4$};
		\draw (v1) edge (v2) edge (v3) edge (v4) edge (w1)
		(v2) edge (v3) edge (v4) 
		(v3) edge (v4) edge (w3)
		(v4) edge (w4)
		(w3) edge (v4) edge (w4);
	  \end{tikzpicture}
	  &\in \Graphs_n(4)
	&
	\begin{tikzpicture}[baseline=-.65ex]
		\node[int] (v1) at (0:.5) {};
		\node[int] (v2) at (90:.5) {};
		\node[int] (v3) at (180:.5) {};
		\node[int] (v4) at (270:.5) {};
		\node[ext] (w1) at (0:1) {$\scriptstyle 1$};
		\node[ext] (w3) at (180:1) {$\scriptstyle 2$};
		\node[ext] (w4) at (270:1) {$\scriptstyle 3$};
		\node[ext] (w5) at (-45:1) {$\scriptstyle 4$};
		\draw (v1) edge (v2) edge (v3) edge (v4) edge (w1)
		(v2) edge (v3) edge (v4) 
		(v3) edge (v4) edge (w3)
		(v4) edge (w4)
		(w3) edge (v4);
	  \end{tikzpicture}
	  &\in \ICG_n(4)
	\\
\begin{tikzpicture}[baseline=-.65ex]
  \node[int] (v1) at (0:.5) {};
  \node[int] (v2) at (90:.5) {};
  \node[int] (v3) at (180:.5) {};
  \node[int] (v4) at (270:.5) {};
  \node[ext] (w1) at (0:1) {$\scriptstyle 1$};
  \node[ext] (w3) at (180:1) {$\scriptstyle 2$};
  \node[ext] (w4) at (270:1) {$\scriptstyle 3$};
  \draw (v1) edge (v2) edge (v3) edge (v4) edge (w1)
  (v2) edge (v3) edge (v4) 
  (v3) edge (v4) edge (w3)
  (v4) edge (w4)
  (w3) edge (v4);
\end{tikzpicture}
&\in \pICG_n(3)
	&
\begin{tikzpicture}[baseline=-.65ex]
  \node[int] (v1) at (0:.5) {};
  \node[int] (v2) at (90:.5) {};
  \node[int] (v3) at (180:.5) {};
  \node[int] (v4) at (270:.5) {};
  \node[ext] (w1) at (0:1) {$\scriptstyle 1$};
  \node[ext] (w3) at (180:1) {$\scriptstyle 2$};
  \node[ext] (w4) at (270:1) {$\scriptstyle 3$};
  \draw (v1) edge (v2) edge (v3) edge (v4) edge (w1)
  (v2) edge (v3) edge (v4) 
  (v3) edge (v4) edge (w3)
  (v4) edge (w4);
\end{tikzpicture}
&\in \ppICG_n(3).
\end{align*}

The degree shifts by one are conventional, to agree with the literature.
To be explicit, the degree of a graph $\Gamma$ in either of $\ppICG_n$, $\pICG_n$ or $\ICG_n$ is given by 
\[
\#(\text{internal vertices})\cdot n - \#(\text{edges})\cdot (n-1) +1.
\]

Furthermore, one defines the \emph{complexity} grading on each of the four dg symmetric sequences above, by assigning a graph $\Gamma$ the complexity 
\[
C=  \#(\text{edges}) - \#(\text{internal vertices}).
\]
In other words the complexity is the loop order of the graph formed by fusing all the external vertices into one vertex.
Or equivalently, the loop order of a graph $\Gamma$ with $r$ external vertices and complexity $C$ is 
\[
L = C+r-1.  
\]
We denote the direct summand of complexity $C$ of any of the symmetric sequences above by the notation $\gr^C (-)$.

The differential $\delta$ on each of the graph complexes above is given by vertex splitting, producing one additional internal vertex and leaving the complexity unchanged.
\begin{align*}
	\delta
	\begin{tikzpicture}[baseline=-.65ex]
		\node[ext] (v) {$\scriptstyle j$};
		\draw (v) edge +(-.5,.5) edge +(-.2,.5) edge +(.2,.5) edge +(.5,.5);
		\end{tikzpicture}
		&=\sum
	\begin{tikzpicture}[baseline=-.65ex]
	\node[ext] (v) at (0,0) {$\scriptstyle j$};
	\node[int](w) at (0,.5) {};
	\draw (v) edge +(-.5,.5) edge +(.5,.5) edge (w) (w) edge +(-.2,.5) edge +(.2,.5);
	\end{tikzpicture}
	&
	\delta
	\begin{tikzpicture}[baseline=-.65ex]
		\node[int] (v) {};
		\draw (v) edge +(-.5,.5) edge +(-.2,.5) edge +(.2,.5) edge +(.5,.5);
		\end{tikzpicture}
		&=\sum
	\begin{tikzpicture}[baseline=-.65ex]
	\node[int] (v) at (0,0) {};
	\node[int](w) at (0,.5) {};
	\draw (v) edge +(-.5,.5) edge +(.5,.5) edge (w) (w) edge +(-.2,.5) edge +(.2,.5);
	\end{tikzpicture}
\end{align*}

One then has the following result:
% \todo{reference}
\begin{thm}[{\v Severa-Willwacher \cite[Proposition 2]{SW}}]
  \label{thm:SW}
The summand of complexity $C$ of the cohomology of $\ICG_n$ is concentrated in degree $-(n-2)C$, i.e., 
\[
  \gr^C H^k( \ICG_n) = 0  
\]
for $k\neq -(n-2)C$.
\end{thm}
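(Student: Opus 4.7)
The plan is to identify $\ICG_n$, up to degree shifts and compatibly with its complexity grading, with an operadic bar construction on a Koszul cooperad, and then to deduce the concentration of cohomology from Koszul duality. The degree count already hints at this: a complexity-$C$ graph with $V$ internal vertices has $V+C$ edges and hence cohomological degree $V-(n-1)C+1$; setting this equal to the claimed critical degree $-(n-2)C$ forces $V=C-1$, which is exactly the value one expects on the ``linear'' part of a Koszul resolution.

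First I would recall that by Kontsevich's theorem the Kontsevich graph operad $\Graphs_n$ is a cofibrant resolution of the chain operad of little $n$-disks, whose cohomology is the $n$-Gerstenhaber operad. The sub-symmetric sequence $\ICG_n[1]\subset \Graphs_n$ corresponds, under the cocommutative coproduct given by disjoint union of graphs, to the primitives. Through this identification $\ICG_n$ becomes, up to shifts, the reduced cobar construction $\Bar^c(e_n^c)$ of the $n$-Gerstenhaber cooperad, with the complexity grading corresponding to a weight on the cobar side (each extra edge beyond a spanning tree contributes one unit of complexity and a corresponding shift in the Koszul weight).

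Next I would apply the Koszul self-duality of $e_n$: as an operad $e_n$ is Koszul, with dual $e_n\{n\}$. It follows that the cohomology of $\Bar^c(e_n^c)$ is concentrated in the diagonal degree prescribed by the Koszul sign rule. Translating through the shifts, the part of the cobar of weight $C$ ends up in cohomological degree $-(n-2)C$. Matching this with the degree count above gives the stated result for $\gr^C H(\ICG_n)$.

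The main obstacle is the precise matching between $\ICG_n$ and the cobar construction, together with the bookkeeping of degree shifts and signs. An alternative, more hands-on route (and the one actually followed in \cite{SW}) is to set up a spectral sequence with respect to a combinatorial filtration on $\ICG_n$ --- for instance by the number of internal vertices of valence $\geq 3$, or by a distinguished subforest --- and to construct an explicit contracting homotopy on the associated graded that kills everything away from the critical degree. Here the heart of the argument is producing a homotopy $h$ with $\delta h+h\delta=\mathrm{id}$ on the non-critical part, to which Lemma~\ref{lem:homotopy} applies.
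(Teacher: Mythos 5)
Your proposal does not close the argument, and its first route rests on a misidentification. The paper's own proof of Theorem~\ref{thm:SW} is essentially a citation: \cite[Proposition 2]{SW} identifies $H(\ICG_n(r))$ with the $n$-Drinfeld--Kohno Lie algebra $\alg t_n(r)$, whose generators $t_{ij}$ are the graphs with a single edge between external vertices $i$ and $j$ and no internal vertex; these have complexity $1$ and degree $-(n-2)$, and since the bracket adds both complexities and degrees and $\alg t_n(r)$ is \emph{generated in complexity one}, everything of complexity $C$ lands in degree $-(n-2)C$. That last observation is the entire content of the deduction, and it appears nowhere in your proposal: even if you establish some concentration statement via Koszul duality, you still need to say why the critical degree is exactly linear in $C$, and generation in complexity one is the cleanest way to do it.

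The concrete gap in your first route is the claim that $\ICG_n$ ``becomes, up to shifts, the reduced cobar construction $\Bar^c(e_n^c)$'' of the Gerstenhaber cooperad. The operadic cobar construction of $e_n^c$ is a free operad on $e_n^c[-1]$, spanned arity-wise by trees of $e_n^c$-decorated vertices; it is a vastly larger object than the subspace $\ICG_n(r)\subset\Graphs_n(r)$, and its natural weight (number of tree vertices) does not coincide with the complexity grading. The correct statement is arity-wise, not operadic: decomposition into internally connected components exhibits $\Graphs_n(r)$ as a Chevalley--Eilenberg/Harrison-type complex built on $\ICG_n(r)$, and since $H(\Graphs_n(r))\cong H^\bullet(\Conf_r(\R^n))$ is a Koszul quadratic algebra (Arnold relations) with Koszul dual $U(\alg t_n(r))$, one obtains $H(\ICG_n(r))\cong\alg t_n(r)$. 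So the duality you need is Koszulness of the algebra $H^\bullet(\Conf_r(\R^n))$ for each fixed $r$, not the operadic self-duality $e_n^!\cong e_n\{n\}$. Your alternative spectral-sequence route names neither the filtration nor the contracting homotopy, so it amounts to restating the problem. To repair the proposal, either make the Harrison-complex identification precise, or simply quote \cite[Proposition 2]{SW} and add the generation-in-complexity-one argument above.
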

\begin{proof}
To be precise, \cite[Proposition 2]{SW} shows the slightly different statement that $H(\ICG_2(r))$ can be identified with the 
Drinfeld-Kohno Lie algebra $\alg t_2(r)$, with the generators $t_{ij}$ of the Drinfeld-Kohno Lie algebra corresponding to graphs with no internal vertices, and an edge between external vertices $i$ and $j$. The proof given in \cite{SW} is written for $n=2$, but is independent of $n$, and shows likewise that $H(\ICG_n(r))$ is identified with the $n$-Drinfeld-Kohno Lie algebra $\alg t_n(r)$.
Again the generators $t_{ij}$ of $\alg t_n(r)$ correspond to graphs with a single edge between external vertices $i$ and $j$, and no internal vertex.
Since these generators hence have complexity $+1$ and degree $-(n-2)$ our theorem follows. 
\end{proof}
\begin{cor}\label{cor:SW}
The summand of complexity $C$ of the cohomology of $\pICG_n$ is concentrated in degree $-(n-2)C$.
\end{cor}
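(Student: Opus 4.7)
\emph{Plan.} The plan is to promote $\pICG_n$ to a chain-level direct summand of $\ICG_n$, so that Theorem~\ref{thm:SW} transfers to it directly. I would first define the natural projection $\pi \colon \ICG_n(r) \to \pICG_n(r)$ sending any graph with at least one isolated (valence zero) external vertex to zero, and restricting to the identity on the subcomplex $\pICG_n(r) \subset \ICG_n(r)$. The only nontrivial content of the argument lies in showing that $\pi$ is a chain map with respect to the edge splitting differential.

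For this, note that splittings of an internal vertex or of an external vertex of valence $\geq 1$ preserve the property ``all external vertices have valence $\geq 1$'', and likewise preserve its negation. The only splittings that could interchange $\pICG_n(r)$ and its complement are those of an isolated external vertex $j$. Such a splitting produces a fresh edge connecting $j$ to a new internal vertex whose only incidence is that new edge, i.e., a univalent internal vertex. By the standard valence convention for the Kontsevich graph complex $\Graphs_n$, graphs containing univalent internal vertices are identified with zero, so these terms vanish. I expect pinning down this combinatorial vanishing against the precise conventions of $\Graphs_n$ to be the main bookkeeping step.

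Once $\pi$ is known to be a chain map, both $\pICG_n(r)$ and $\ker\pi$ are subcomplexes of $\ICG_n(r)$, their intersection is zero, and their sum is all of $\ICG_n(r)$. The decomposition obviously respects the complexity grading, so that we obtain a direct sum decomposition of complexes
\[
  \gr^C \ICG_n(r) \;=\; \gr^C \pICG_n(r) \,\oplus\, \gr^C \ker\pi.
\]
Passing to cohomology, $\gr^C H(\pICG_n(r))$ becomes a direct summand of $\gr^C H(\ICG_n(r))$. By Theorem~\ref{thm:SW} the latter is concentrated in cohomological degree $-(n-2)C$, so the same holds for the summand $\gr^C H(\pICG_n(r))$, which is exactly the statement of the corollary.
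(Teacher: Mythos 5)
Your proof is correct and takes essentially the same route as the paper: the paper likewise observes that $\pICG_n$ is a direct summand of $\ICG_n$, with the one-sided inverse of the inclusion given by setting to zero every graph having a zero-valent external vertex, and then transfers Theorem~\ref{thm:SW} to the summand. The only difference is that you spell out the chain-map verification (the only problematic terms, splittings of an isolated external vertex, produce a univalent internal vertex and hence vanish), a point the paper leaves implicit.
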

\begin{proof}
  $\pICG_n\subset \ICG_n$ is a direct summand, a one-sided inverse $\ICG_n\twoheadrightarrow \pICG_n$ of the inclusion is given by setting to zero graphs with zero-valent external vertices. Hence the result of Theorem \ref{thm:SW} implies the corollary.
\end{proof}

\begin{cor}
\label{lemma:lower bound ppicg}
  The summand of complexity $C$ of the cohomology of $\ppICG_n$ is concentrated in degrees $\geq -(n-2)C$.
\end{cor}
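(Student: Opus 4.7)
My plan is to deduce the bound from Corollary~\ref{cor:SW} by analyzing the long exact sequence in cohomology associated with the short exact sequence of complexes
\[
0 \to \ppICG_n \to \pICG_n \to Q \to 0, \qquad Q := \pICG_n/\ppICG_n.
\]
I would first verify that $\ppICG_n$ is a subcomplex of $\pICG_n$: internal vertex splits preserve the univalence of external vertices, and a split of a univalent external vertex either vanishes (producing a forbidden low-valence internal vertex) or stays in $\ppICG_n$ by subdividing the incident edge through a new bivalent internal vertex.

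Writing out the resulting long exact sequence
\[
	\cdots \to H^{k-1}(Q) \xrightarrow{\delta} H^k(\ppICG_n) \to H^k(\pICG_n) \to H^k(Q) \to \cdots
\]
and using Corollary~\ref{cor:SW} to kill the middle term $\gr^C H^k(\pICG_n)$ for $k \neq -(n-2)C$, the desired vanishing $\gr^C H^k(\ppICG_n) = 0$ for $k < -(n-2)C$ reduces to the cohomological vanishing statement $\gr^C H^{k-1}(Q) = 0$ in the same range.

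To bound $H^*(Q)$, my approach is to filter $\pICG_n$, and hence $Q$, by the excess $e(\Gamma) := \sum_{v \in \mathrm{ext}}(\deg v - 1) \geq 0$; since vertex splittings preserve or strictly decrease excess, this gives an increasing filtration by subcomplexes with $F_0 = \ppICG_n$. The induced spectral sequence on $Q$ then has $E_0$-page consisting of graphs of fixed positive excess with only the excess-preserving part of the differential (internal splits and edge-subdivisions at external vertices). I would identify each excess-$p$ summand, via the ``expansion'' operation replacing each external vertex of valence $v$ by $v$ univalent externals sharing its label, with an appropriate symmetric-group coinvariant of $\ppICG_n$ at the larger arity $r+p$, so that Corollary~\ref{cor:SW} can be bootstrapped inductively to yield the desired bound on $H^*(Q)$.

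The main obstacle will be the analysis of the $E_0$-page: since the expansion identification increases the arity, the induction must be set up carefully to avoid circularity—for instance an induction on the pair $(C, r)$, running from large $r$ (where the valence constraint $r \leq 2C$ imposed by the internal-valence convention forces $I = 0$ and the complexes are manifestly simpler) down to small $r$. As a backup strategy, one could try constructing an explicit contracting homotopy on the excess-positive strata $F_p/F_{p-1}$ in the relevant degree range, bypassing the spectral sequence altogether.
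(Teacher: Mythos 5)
Your proposal is correct and follows essentially the same route as the paper: the same filtration of $\pICG_n$ by the excess (the paper's ``defect'') of external valences, the same identification of the positive-excess strata with symmetric-group coinvariants of $\ppICG_n$ at higher arity, and the same input from Corollary~\ref{cor:SW}. The paper merely packages the bootstrap differently---running a single spectral sequence on $\gr^C\pICG_n(r_0)$ and arguing that a class of globally minimal degree $k_0$ (minimal over the finitely many arities contributing at fixed $C$) must survive to $H(\pICG_n)$, forcing $k_0=-(n-2)C$---instead of your long exact sequence combined with downward induction on the arity $r$.
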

This result is essentially known, but hard to reference in the literature.
It is a formal consequence of the observation that $\pICG_n$ is quasi-cofree as a right operadic $\Com$-module, with the space of cogenerators identified with $\ppICG_n$. For convenience we provide a self-contained elementary proof here, without reference to operadic modules.
\begin{proof}
We fix a complexity $C$ and consider the finite dimensional dg vector spaces
\[
 \gr^C \ppICG_n(r)
\]
for $r=1,2,\dots$. Since the complexity of a graph in $\ppICG_n(r)$ is at least $r-1$, there are only finitely many $r$ for which the complex above is non-zero. 
If the cohomology of all $\gr^C \ppICG_n(r)$ is zero we are done trivially.
Otherwise, there is a non-trivial cohomology class of lowest degree $k_{0}$, say $x\in H^{k_{0}}(\gr^C \ppICG_n(r_0))$.

Next consider the finite dimensional dg vector space
\[
 \gr^C \pICG_n(r_0).
\]
We filter this vector space by the sum of valences of the external vertices of graphs minus $r_0$. We call this number the \emph{defect}, and denote the pieces of defect $D$ of the corresponding associated graded by 
\[
 \gr_{def}^D \gr^C \pICG_n(r_0).
\]
In particular, there are no graphs of negative defect and we have that 
\begin{equation}\label{equ:grdef0}
 \gr_{def}^0 \gr^C \pICG_n(r_0) = \gr^C \ppICG_n(r_0).
\end{equation}
The differential can only leave constant or decrease the defect. We consider the spectral sequence associated to the filtration by defect. It converges to the cohomology of $\gr^C \pICG_n(r_0)$ by finite dimensionality of our complex, and by Corollary \ref{cor:SW} the cohomology is concentrated in degree $-C(n-2)$.

We consider the first page, i.e., the associated graded complexes $\gr_{def}^D \gr^C \pICG_n(r_0)$.
Combinatorially, the differential on the associated graded splits internal vertices, but not external vertices, since that would reduce the defect. 
The crucial observation is now that the complexes $\gr_{def}^D \gr^C \pICG_n(r_0)$ are identified with direct sums of direct summands of $\gr^C \ppICG_n(r_0+D)$.
More precisely, we have the direct sum decomposition of complexes
\[
  \gr_{def}^D \gr^C \pICG_n(r_0)  
  = \bigoplus_{\nu_1,\dots,\nu_{r_0}\geq 1\atop \nu_1+\cdots+\nu_{r_0}=D+r_0} V_{\nu_1,\cdots,\nu_{r_0}},
\]
with the subcomplex $V_{\nu_1,\cdots,\nu_{r_0}}$ spanned by graphs $\Gamma$ whose external vertex $j$ has valence $\nu_j$ for $j=1,\dots,r_0$.
Then we have that 
\[
  V_{\nu_1,\cdots,\nu_{r_0}} \cong \left(\gr^C \ppICG_n(r_0+D) \right)_{S_{\nu_1}\times \cdots \times S_{\nu_{r_0}}}.
\]
Since we assumed that the cohomology of $\gr^C \ppICG_n(r_0+D)$ is concentrated in degrees $\geq k_0$, we conclude that the cohomology of $\gr_{def}^D \gr^C \pICG_n(r_0)$ is concentrated in degrees $\geq k_0$ as well.

Now, using \eqref{equ:grdef0}, our non-trivial class $x\in H^{k_{0}}(\gr^C \ppICG_n(r_0))$ defines a non-trivial class of lowest degree $k_0$ and of defect 0 on the second page of our spectral sequence converging to $H(\pICG_n(r_0))$.
Hence it cannot be exact on any page of the spectral sequence by degree reasons.
Furthermore the higher differentials in the spectral sequence all decrease the defect, hence our class must be closed on all pages.
Hence it survives in cohomology and we conclude from Corollary \ref{cor:SW} that $k_0=-C(n-2)$.
\end{proof}

\subsection{Expressing $\GC_{(g),1}$ in terms of $\ppICG_n$}
\label{sec: GCg}

 Let $V_g\cong H^m(W_{g,1})\cong H_{m+1}(W_{g,1})$ and $V_g^* \cong H^{m+1}(W_{g,1})\cong H_{m}(W_{g,1})$ correspond to the defining and adjoint representations of $GL_g$ respectively. Every vertex of a graph in $\GC_{(g),1}$ is decorated by elements of $S\left(\bar{H}(W_{g,1})\right)\cong S\left(V_g\oplus V_g^*\right)$. We can express the graph complex $\GC_{(g),1}$ in terms of $\ppICG_n$, for $n=2m+1$:
\[
	\GC_{(g),1}\cong\prod_r\left(\ppICG_n(r)\otimes \left( V_g[m-n+1]\oplus V_g^*[m-n+2]\right)^{\otimes r}\right)_{S_r}.
\]

Let $\Gamma$ denote a graph in $\GC_{(g),1}$ and $\tilde{\Gamma}$ the corresponding graph in $\ppICG_n$ and let $\tilde{k}$ be the cohomological degree of $\tilde{\Gamma}$. Furthermore, let $a$ denote the number of decorations of $\Gamma$ in $V_g$ and b the number of decorations in $V_g^*$. The degree $k$ of $\Gamma$ is then given as
\[
	k=\tilde{k} +(a+b)(n-1)-am-b(m+1).
\]

Let $v$ and $e$ denote the number of vertices and edges of $\Gamma$ and $\tilde{e}$ the number of edges of $\tilde{\Gamma}$. The complexity $C$ is then defined as $C=\tilde{e}-v= e-v+a+b$. 

Now we can use the lower degree bound for the cohomology of $\ppICG_n$ as shown in Lemma \ref{lemma:lower bound ppicg} to derive a lower bound for the degree of the cohomology of $\GC_{(g),1}$:
\[
\begin{aligned}
	k &=\tilde{k} +(a+b)(n-1)-am-b(m+1)\\
	&\geq -(n-2)C+(a+b)(n-1)-am-b(m+1)\\
	&=-(n-2)C+am+b(m-1)
\end{aligned}
\]
Since we restrict to the weight $W$ and imbalance $H$ part we can espress $a$, $b$, and $C$ in terms of the imbalance $H=a-b$ and the weight 
\[
	W= 2(e-v)+D=2(e-v)+a+b=2C-(a+b):
\]
\[
\begin{aligned}
	a =&\frac{1}{2}(-W+H)+C\\
	b =&\frac{1}{2}(-W-H)+C
\end{aligned}
\]
Finally, we derive the lower bound for the degree of $\Gamma$:
\[
\begin{aligned}
	k&\geq -(2m-1)C +m\frac{1}{2}(-W+H)+(m-1)\frac{1}{2}(-W-H)+(2m-1)C\\
	&=-\frac{1}{2}(2m-1)W+\frac{H}{2}\\
	&=k_{crit}^L
\end{aligned}
\]

\subsection{Proof of part (i) of Theorem \ref{thm:main cohom GC vanishing}}
From section \ref{sec: GCg} we know that 
\[ 
	H\left(gr^{W,H}\GC_{(g),1}\right)=0 ~,~\text{for}~ k<-k_{crit}^L(W,H).
\]
Since the differential is compatible with the bigrading we can deduce the lower bound in part (i) of Theorem \ref{thm:main cohom GC vanishing}.

\subsection{The $\osp_{g,1}^{nil}$ part of $\GCex_{(g),1}$}
\label{sec: ospnil}
Let us also consider the $\osp_{g,1}^{nil}$ part of $\GCex_{(g),1}$. An element of $\osp_{g,1}^{nil}$ maps a decoration in $H_{m}(W_{g,1})$ to a decoration in $H_{m+1}(W_{g,1})$.
It therefore contributes to the imbalance but not to the weight, i.e. $H=-2$ and $W=0$, and has cohomological degree $-1=k_{crit}^L(0,-2)$.
% \in \Hom\left(V_g^*,V_g\right)\cong V_g\otimes V_g$ the degree of an element is given by $k = 2m$ which fulfills the lower bound $k=2m\geq 1=\frac{H}{2}=k_{crit}^L$.
% \todo{$\osp$ maps should reside in the critical degree}

%We consider the weight $W$ and imbalance $H$ part of the cohomology of $\GCex_{(g),1}$ and use the fact that the differential is compatible with the bigrading:
%\begin{align*}
%	&gr^{W,H}H\left(\GCex_{(g),1}\right)\\
%	&=gr^{W,H}\osp_{g,1}^{nil}\ltimes gr^{W,H}H(\GC_{(g),1})\right)\\
%	&=gr^{W,H}\osp_{g,1}^{nil}\ltimes H\left(gr^{W,H}\GC_{(g),1}\right).
%\end{align*}
%From the previous sections we know that $gr^{W,H}\osp_{g,1}^{nil}$ is concentrated in the critical degree $k_{crit}^L(W,H)$ and that $H\left(gr^{W,H}\GC_{(g),1}\right)=0$ for $\k<k_{crit}^L(W,H)$. In particular we deduce that 
%\[ 
%	gr^{W,H}H\left(\GC_{(g),1}\right)=0 ~,~\text{for}~ k<-k_{crit}^L(W,H)
%\]
%which proofs the lower bound in part (i) of theorem \ref{thm:main cohom GC vanishing}.
%
\section{Upper bound and proof of part (ii) of Theorem \ref{thm:main cohom GC vanishing}} \label{sec:main cohom GC vanishing proof 3}

\subsection{Graph complex with two-colored edges -- recollection from \cite{BM, FNW}}\label{sec:CGamma}

Let $\Gamma$ be a graph with vertex set $\{1,\dots,N\}$ and $k$ ordered edges.
We shall need an auxiliary dg vector space $C_\Gamma$ spanned by certain colorings of the edges of $\Gamma$. This dg vector space has appeared first in \cite{BM}. We shall repeat here the definition from \cite{FNW}.

To this end, let $C$ be the two-dimensional acyclic complex
\[
C = (\Q[1] \to \Q),
\]
and consider the complex 
\[
C^{\otimes k}.  
\]
We interpret the natural basis elements of $C^{\otimes k}$ as assignments of dash-patterns to edges of $\Gamma$, with $\Q[1]$ corresponding to a solid edge, and $\Q$ to a dashed edge.
\[
\begin{tikzpicture}
  \node[int] (v1) at (0,0) {};
  \node[int] (v2) at (1,0) {};
  \node[int] (v3) at (0,1) {};
  \node[int] (v4) at (1,1) {};
  \draw (v1) edge[dashed] (v2) edge (v3) edge (v4)
  (v2) edge[dashed] (v4)
  (v3) edge (v4);
\end{tikzpicture}
\]

The differential acts by summing over edges, and replacing a solid edge by a dashed one, rendering $C^{\otimes k}$ is acyclic if $k> 0$.

Let $I_{disc}\subset C^{\otimes k}$ be the subcomplex spanned by all such graphs for which the subgraph consisting of the vertices and the solid edges is not connected and define 
\[
C_\Gamma = C^{\otimes k}/I_{disc}
\] 
to be the quotient complex obtained by setting such solid-disconnected graphs to zero.
Then $C_\Gamma$ is concentrated in cohomological degrees $-k,\dots, 1-N$.

\begin{lemma}[Lemma 8.8 of \cite{BM} or Lemma 35 of \cite{FNW}]
\label{lem:CGamma}
The cohomology of $C_\Gamma$ is concentrated in the top degree $1-N$.
\end{lemma}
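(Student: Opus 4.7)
My plan is to induct on the number of edges $k$ of $\Gamma$. We may assume $\Gamma$ is connected, since otherwise no subgraph of solid edges can span all $N$ vertices and $C_\Gamma=0$, making the statement vacuous. The base case $k=0$ forces $N=1$, and $C_\Gamma=\Q$ is concentrated in degree $0=1-N$.

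For the induction step, pick any edge $e$ of $\Gamma$ and split each basis element of $C^{\otimes k}$ according to the color of $e$. Since the differential only turns solid edges into dashed ones, the ``$e$-dashed'' colorings form a subcomplex and the ``$e$-solid'' colorings form the corresponding quotient. After descending to $C_\Gamma$ and matching the connectedness condition in each piece, the ``$e$-dashed'' part is naturally identified with $C_{\Gamma\setminus e}$ (which is zero if $e$ is a bridge), while the ``$e$-solid'' part is identified with $C_{\Gamma/e}[1]$. The degree shift $[1]$ accounts for the solid color of $e$ living in degree $-1$, and the identification with $\Gamma/e$ uses that a solid subgraph of $\Gamma$ containing the solid edge $e$ is connected and spans $\Gamma$ if and only if its image under the contraction of $e$ is connected and spans $\Gamma/e$. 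This yields a short exact sequence of complexes
\[
0 \to C_{\Gamma\setminus e} \to C_\Gamma \to C_{\Gamma/e}[1] \to 0.
\]

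The inductive hypothesis applies to the two outer terms: $C_{\Gamma\setminus e}$ has $N$ vertices and $k-1$ edges, so its cohomology sits in degree $1-N$; while $C_{\Gamma/e}$ has $N-1$ vertices and $k-1$ edges, so $H(C_{\Gamma/e})$ sits in degree $2-N$ and hence $H(C_{\Gamma/e}[1])$ in degree $1-N$. The associated long exact sequence then forces $H(C_\Gamma)$ to be concentrated in degree $1-N$ as well. The only delicate step is the combinatorial dictionary identifying the ``$e$-solid'' part with $C_{\Gamma/e}[1]$, including the degree shift; this is routine but must be set up carefully. Degenerate cases pose no problem: when $e$ is a loop, $\Gamma/e=\Gamma\setminus e$ and the connecting map in the long exact sequence becomes (up to sign) the identity, so $C_\Gamma$ is in fact acyclic, which is vacuously ``concentrated in degree $1-N$''. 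Parallel edges are handled identically.
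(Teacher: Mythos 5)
Your argument is correct. Note that the paper does not prove this lemma at all: it imports it by citation from B\"okstedt--Minuz (Lemma 8.8) and Felder--Naef--Willwacher (Lemma 35), so your deletion--contraction induction is a self-contained alternative rather than a reconstruction of an argument in the text. The structure is sound: since $I_{disc}$ and the span of the $e$-dashed colorings are both spanned by basis colorings, the short exact sequence $0 \to C_{\Gamma\setminus e} \to C_\Gamma \to C_{\Gamma/e}[1] \to 0$ does hold on the nose (the $e$-dashed part is a subcomplex because the differential only converts solid to dashed, and the induced differential on the $e$-solid quotient is $\sum_{f\neq e} d_f$, which matches the differential of $C_{\Gamma/e}$ under the obvious bijection of colorings; the connectivity conditions correspond because contracting a solid edge preserves and reflects connectedness of the spanning solid subgraph). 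The degree bookkeeping is right: $C_{\Gamma/e}$ has $N-1$ vertices, so $H(C_{\Gamma/e}[1])$ sits in degree $(2-N)-1=1-N$, and the long exact sequence closes the induction. The two genuinely delicate points are exactly the ones you flag: (a) the bridge case, where $C_{\Gamma\setminus e}=0$ and the sequence degenerates harmlessly, and (b) the loop case, where $\Gamma/e=\Gamma\setminus e$ has the same vertex count and the degree count would a priori allow cohomology in degree $-N$; your observation that the connecting map is then induced by the chain-level isomorphism ``turn the solid loop dashed'' and hence is an isomorphism on cohomology (so $C_\Gamma$ is acyclic) is the correct fix. What your route buys over the citation is an elementary, purely combinatorial proof with no outside input; what it costs is that one must verify the dictionary for the quotient term carefully, which you have sketched adequately.
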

The cohomology is hence given by linear combinations of solid-trees, modulo the differential of solid-one-loop graphs.

\subsection{$\GC_{(g),1}$ and its graded dual $\G_{(g),1}$}

Instead of the cohomology of $\GC_{(g),1}$ we can study the cohomology of its graded dual $\left(\GC_{(g),1}\right)^c=\G_{(g),1}$. We consider the weight $W$ and imbalance $H$ part $gr^{(W,H)}H\left(\G_{(g),1}\right)$ of its cohomology. Since the bigrading is compatible with the differential this is equivalent to $H\left(gr^{(W,H)}\G_{(g),1}\right)$. As before let $V_g\cong H^m(W_{g,1})$ and $V_g^*\cong H^{m+1}(W_{g,1})$ correspond to the defining and dual representations of $\GL_g$ respectively. Every vertex of a graph in $\G_{(g),1}$ is decorated by elements of $S\left(\bar{H}(W_{g,1})\right)\cong S\left(V_g\oplus V_g^*\right)$.

%which corresponds to $\left(\osp_{g,1}^{nil}\right)^*\oplus \G_{(g),1}$ as a vector space. Its cohomology is given by $H\left(\left(\GCex_{(g),1}\right)^c\right)=\osp_{g,1}^{nil}\ltimes H(\G_{(g),1})\right)$. As seen before in section \ref{sec: ospnil} $gr^{(W,H)}\left(\osp_{g,1}^{nil}\right)^*$ is concentrated in the critical degree $-k_{crit}^L(W,H)$. Hence, we can consider the weight $W$ and imbalance $H$ part $gr^{(W,H)}H\left(\G_{(g),1}\right)$ of the cohomology of the complex $\G_{(g),1}$.  Since the bigrading is compatible with the differential this is equivalent to $H\left(gr^{(W,H)}\G_{(g),1}\right)$. As before let $V_g\cong H^m(W_{g,1})$ and $V_g^*\cong H^{m+1}(W_{g,1})$ correspond to the defining and adjoint representations of $\GL_g$ respectively. Every vertex of a graph in $\G_{(g),1}$ is decorated by elements of $S\left(\bar{H}(W_{g,1})\right)\cong S\left(V_g\oplus V_g^*\right)$.
 
\subsection{Stabilisation for large $g$}
\label{sec:stabil_G}

In order to apply the invariant theory for $\GL_g$ we study the auxiliary graph complex

\[
G^g_{M,N} :=\left(\G_{(g),1}\otimes V_g^M\otimes \left(V_g^*\right)^N\right)^{\GL_g}
\]
and its graded version
\[
gr^{(W,H)} G^g_{M,N} =gr^{(W,H)}\left(\G_{(g),1}\otimes V_g^M\otimes \left(V_g^*\right)^N\right)^{\GL_g}=\left(gr^{(W,H)}\G_{(g),1}\otimes V_g^M\otimes \left(V_g^*\right)^N\right)^{\GL_g}.
\]

We have natural inclusions of groups $\GL_{g-1}\subset \GL_g$, and $\GL_{g-1}$-equivariant isomorphisms 
\begin{align*}
V_g &\cong V_{g-1} \oplus \Q & V_g^* &\cong V_{g-1}^* \oplus \Q.
\end{align*}
Hence we have sequence of natural projection maps 
\begin{equation}\label{equ:Ggtower}
	\cdots \to G^{g+1}_{M,N} \to G^{g}_{M,N}\to G^{g-1}_{M,N} \to \cdots\, .
\end{equation}

\begin{lemdef}
\label{lemdef:Ginfty}
	The sequence above stabilizes in each finite weight.
	That is, for each $M,N,W,H, g$ with $H=N-M$ and $2g\geq W+M+N+2$ the map 
	\[
		gr^{(W,H)}G_{M,N}^{g+1} \to gr^{(W,H)}G_{M,N}^{g}
	\]
	is an isomorphism.
	We define the bigraded dg vector space 
	\[
		G_{M,N}^{\infty} = \bigoplus_{W,H} 	gr^{(W,H)}G_{M,N}^{\infty}
	\]
	such that 
	\[
		gr^{(W,H)}G_{M,N}^{\infty} := gr^{(W,H)}G_{M,N}^{g}
	\]
	for any $g\geq \frac 12(W+M+N+2)$. In other words, $G_{M,N}^{\infty}$ is the weight-wise limit of the tower \eqref{equ:Ggtower}.
\end{lemdef}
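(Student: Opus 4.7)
The plan is to reduce the stabilization to the Fundamental Theorem of Invariant Theory (Theorem \ref{fft}) applied term-by-term to the decomposition of $\G_{(g),1}$ by decoration type, and then verify that the FFT's injectivity bound is satisfied precisely in the stated range.

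First I would decompose the underlying graded vector space. Every connected graph $\Gamma$ with $v$ vertices, $e$ edges, $a$ decorations in $V_g$ and $b$ decorations in $V_g^*$ contributes a summand of the form
\[
\mathcal{C}_{\Gamma,a,b} \otimes V_g^{\otimes a} \otimes (V_g^*)^{\otimes b},
\]
where $\mathcal{C}_{\Gamma,a,b}$ is a purely combinatorial (that is, $g$-independent) vector space carrying an action of $S_a \times S_b$ that accounts for the graph automorphisms and the positions of the decorations on vertices. Tensoring with $V_g^{\otimes M}\otimes (V_g^*)^{\otimes N}$ and taking $\GL_g$-invariants gives
\[
\left(\mathcal{C}_{\Gamma,a,b} \otimes V_g^{\otimes (a+M)} \otimes (V_g^*)^{\otimes (b+N)}\right)^{\GL_g}.
\]
By Theorem \ref{fft} this vanishes unless $a+M=b+N$, i.e.\ unless $a-b=H$, which is automatic in the imbalance-$H$ part. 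When the equality holds, the invariants are the image of the surjection from $\mathcal{C}_{\Gamma,a,b}\otimes \Q[S_{a+M}]$ (with appropriate symmetrization under $S_a\times S_b$), and the FFT further asserts that this surjection is an \emph{isomorphism} as soon as $g\geq a+M$.

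Next I would verify the bound. Since $\G_{(g),1}$ is spanned by connected graphs, one has $v-e\leq 1$, whence
\[
D := a+b \;=\; W - 2(e-v) \;\leq\; W+2.
\]
Combined with $a-b=H=N-M$ we obtain $a+M = b+N = \tfrac{1}{2}(D+M+N) \leq \tfrac{1}{2}(W+M+N+2) \leq g$ under the hypothesis $2g\geq W+M+N+2$. Thus the FFT applies with isomorphism in every summand contributing to $\gr^{(W,H)} G^g_{M,N}$, and the description of the invariants by $\mathcal{C}_{\Gamma,a,b}$ and a symmetric-group quotient of $\Q[S_{a+M}]$ becomes independent of $g$ in this range.

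Finally I would check functoriality. The projection map $\gr^{(W,H)}G^{g+1}_{M,N}\to \gr^{(W,H)}G^g_{M,N}$ is induced by the $\GL_g$-equivariant decompositions $V_{g+1}\cong V_g\oplus \Q$ and $V_{g+1}^*\cong V_g^*\oplus \Q$; on the image of $\Q[S_{a+M}]$ the relevant Schur--Weyl statement tells us that this map corresponds to the identity on $\Q[S_{a+M}]$ modulo the part that becomes zero under invariance for the smaller group, and in the stable range the FFT isomorphism identifies both sides with the same quotient of $\mathcal{C}_{\Gamma,a,b}\otimes \Q[S_{a+M}]$. The differential $d = d_c + d_{cut}$ clearly commutes with taking $\GL_g$-invariants (both pieces are $\GL_g$-equivariant, the cut operation using $\Delta$ being $\GL_g$-invariant by construction), so the identification is a chain map.

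The main subtlety I expect is bookkeeping rather than conceptual: one must carefully identify the $S_a\times S_b$ action on $\mathcal{C}_{\Gamma,a,b}$ (coming from permuting decorations and from the symmetric-algebra structure) with the corresponding action induced on $\Q[S_{a+M}]$ via the FFT, and check that the surjection onto invariants descends to the quotient after these symmetries. Once that is done, the claim that $\gr^{(W,H)}G^g_{M,N}$ is $g$-independent for $g\geq \tfrac{1}{2}(W+M+N+2)$ follows, and $G^\infty_{M,N}$ is well-defined as the stable value.
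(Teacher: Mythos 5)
Your proposal is correct and follows essentially the same route as the paper: decompose $\G_{(g),1}$ by the underlying decoration-free graph into $g$-independent combinatorial pieces tensored with $V_g^{\otimes a}\otimes (V_g^*)^{\otimes b}$, apply the Fundamental Theorem of Invariant Theory (Theorem \ref{fft}) to $\left(V_g^{\otimes(a+M)}\otimes (V_g^*)^{\otimes(b+N)}\right)^{\GL_g}$, and derive the bound $a+b\leq W+2$ from connectedness to land in the injectivity range $g\geq a+M$. Your additional remarks on why the tower projection itself realizes the isomorphism and why the differential descends are points the paper treats only implicitly, but they do not change the argument.
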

Note that in particular we obtain natural maps 
\[
	G_{M,N}^{\infty} \to G_{M,N}^{g}
\]
for every $g$.

\begin{proof}
Let us consider a graph in $\G_{(g),1}$. We denote the graph without the decorations in $S(V_g\oplus V_g^*)$ of its vertices as the core of the corresponding graph. As a representation of $\GL_g$, $\G_{(g),1}$ corresponds to a direct sum over isomorphism classes of cores $\Gamma$
\[
	\G_{(g),1}\cong\bigoplus_{\Gamma}U_\Gamma
\]
with
\[
	U_\Gamma\cong\left(\left(\bigotimes_{v\in V\Gamma}S^{\geq k(v)}\left(V_g\oplus V_g^*\right)\right)\otimes W_\Gamma \right)_{\Aut(\Gamma)}.
\]
Here $\Aut(\Gamma)$ is the symmetry group of the core $\Gamma$ and $W_{\Gamma}$ is a one dimensional representation of $\Aut(\Gamma)$, on which $\GL_g$ acts trivially. The number $k(v)$ is the minimal number of decorations on the core vertex $v$ needed to satisfy the trivalence assumption, i.e., if $v$ is $j$-valent in the core, then $k(v)=\max(0,3-j)$.
Expanding the symmetric products, we furthermore get 
\[
	U_\Gamma\cong\left(\bigoplus_{\alpha,\beta: V\Gamma \to \Z_{\geq 0} \atop \alpha(v)+\beta(v) \geq k(v)} \left((V_g)^{\otimes \sum_v \alpha(v)}\otimes \left(V_g^*\right)^{\otimes \sum_v \beta(v)}\right)_{\prod_v (S_{\alpha(v)}\times S_{\beta(v)})} \otimes W_\Gamma \right)_{\Aut(\Gamma)}.
\]

Accordingly $G_{M,N}^{g}$ splits into a direct sum of subspaces of the form
\begin{align*}
	&\left(\left( \bigoplus_{\alpha,\beta: V\Gamma \to \Z_{\geq 0} \atop \alpha(v)+\beta(v) \geq k(v)} \left((V_g)^{\otimes \sum_v \alpha(v)}\otimes \left(V_g^*\right)^{\otimes \sum_v \beta(v)}\right)_{\prod_v (S_{\alpha(v)}\times S_{\beta(v)})}
	\otimes W_\Gamma \right)_{Aut(\Gamma)}
	\otimes V_g^{\otimes M}\otimes \left(V_g^*\right)^{\otimes N}\right)^{\GL_g}\\
	% &\cong
	% \left(\left( \bigoplus_{\alpha,\beta: V\Gamma \to \Z_{\geq 0} \atop \alpha(v)+\beta(v) \geq k(v)} \left((V_g)^{\otimes \sum_v \alpha(v)}\otimes \left(V_g^*\right)^{\otimes \sum_v \beta(v)}\right)_{\prod_v (S_{\alpha(v)}\times S_{\beta(v)})}
	% \otimes V_g^{\otimes M}\otimes \left(V_g^*\right)^{\otimes N}
	% \right)^{\GL_g}
	% \otimes W_\Gamma
	% \right)_{Aut(\Gamma)}
	&\cong
	\left(\left( \bigoplus_{\alpha,\beta: V\Gamma \to \Z_{\geq 0} \atop \alpha(v)+\beta(v) \geq k(v)} \left((V_g)^{\otimes \sum_v \alpha(v)}\otimes \left(V_g^*\right)^{\otimes \sum_v \beta(v)}
	\otimes V_g^{\otimes M}\otimes \left(V_g^*\right)^{\otimes N}
	\right)^{\GL_g}
	\right)_{\prod_v (S_{\alpha(v)}\times S_{\beta(v)})}
	\otimes W_\Gamma
	\right)_{Aut(\Gamma)}
	.
\end{align*}
In the last step we used that taking the coinvariants with respect to the finite symmetry group commutes with taking the invariants with respect to $\GL_g$ and that $\GL_g$ acts trivially on $W_\Gamma$. 

Let $a=\sum_{v\in V\Gamma} \alpha(v)$ denote the number of decorations in $V_g$ and $b=\sum_{v\in V\Gamma} \beta(v)$ the number of decorations in $V_g^*$ of the graph. Finally, we consider the $\GL_g$ invariants of expressions of the form $V_g^{\otimes A} \otimes \left(V_g^*\right)^{\otimes B}$ with $A=a+M$ and $B=b+N$.

If $A=B\leq g$ the fundamental theorems of invariant theory for $\GL_g$ as stated in Theorem \ref{fft} lead to
\[
	\left(V_g^{\otimes A} \otimes \left(V_g^*\right)^{\otimes B}\right)^{\GL_g}\cong \Q[S_A]
\]
which is equivalent to connecting every $V_g^*$ to a $V_g$ by the the $\frac 12$-diagonal element $\Delta^{\frac 12} =(-1)^m \sum_{i=1}^g \alpha_i \otimes \beta_i$ where $V_g=span\{\alpha_1,\dots,\alpha_g\}$ and $V_g^*=span\{\beta_1,\dots,\beta_g\}$. On the other hand if $A\neq B$ the space is zero. Therefore if $g$ is large enough we can deduce that $gr^{(W,H)}G_{M,N}^{g}$ stabilises, i.e. it is independent of the genus $g$ and if $A\neq B$ it vanishes.

Since we consider connected graphs the numbers of edges $e$ and of vertices $v$ satisfy $e-v\geq -1$, and hence we have $W\geq -2 +a+b$. This corresponds to an upper bound for the number of decorations $D=a+b\leq W+2$.
The stabilization condition of Theorem \ref{fft} is $2g\geq A+B=a+b+M+N$, which is hence satisfied if $2g\geq W+M+N+2$.

% \todo{Boundaries/Inequalities}

% The condition $A=B$ leads to $H=a-b=N-M$. In order to derive a lower bound for the genus to ensure stabilisation of $gr^{(W,H)}G_{M,N}^{g}$ we use the fact that the considered graphs are connected and therefore $e-v\geq-1$, as well as $b\geq 0$. This leads to the upper bound for $a$: $a\leq W+2$. Hence, for $g\geq W+2+M$ we have $g\geq a+M =A$ and therefore the considered complex stabilises.
\end{proof}

The complexes $G_{M,N}^{g}$ have the following significance:
\begin{lemma}
\label{lem:G_vanishing}
If $H^k\left(\gr^{(W,H)}G_{M,N}^{g}\right)=0$ for fixed $W,H,k,g$ and all $M,N$ such that $M+N\leq W+2$ then 
\begin{equation}\label{equ:lem G_vanishing}
H^k\left(\gr^{(W,H)}\G_{(g),1}\right)=0.
\end{equation}
In particular, if $H^k\left(\gr^{(W,H)}G_{M,N}^{\infty}\right)=0$ for all $M,N$ such that $M+N\leq W+2$, then \eqref{equ:lem G_vanishing} holds for all $g$ such that $g\geq W+2$.
\end{lemma}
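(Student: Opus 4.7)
The plan is to reduce this to the representation-theoretic fact recorded in Lemma \ref{lem:rep_gl_g}. The hypothesis involves $\GL_g$-invariants of $\gr^{(W,H)}\G_{(g),1}$ tensored with tensor powers of the defining and dual representations, which is precisely the input shape of that lemma.

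First I would observe that $\gr^{(W,H)}\G_{(g),1}$ is a $\GL_g$-representation of order at most $W+2$. Indeed, in each isotypic ``core'' summand $U_\Gamma$ appearing in the decomposition introduced in the proof of Lemma/Definition \ref{lemdef:Ginfty}, every vertex is decorated by an element of $S(V_g\oplus V_g^*)$, so $U_\Gamma$ is a subquotient of tensor powers $V_g^{\otimes a}\otimes (V_g^*)^{\otimes b}$ with $a+b$ equal to the number of decorations $D$. As explained at the end of the proof of Lemma/Definition \ref{lemdef:Ginfty}, connectedness of the graph forces $e-v\geq -1$, hence $D = W - 2(e-v) \leq W+2$. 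So $\gr^{(W,H)}\G_{(g),1}$, and therefore also its subquotient $H^k(\gr^{(W,H)}\G_{(g),1})$, is a $\GL_g$-representation of order at most $W+2$.

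Next, since $\GL_g$ is reductive over $\Q$, the functor of $\GL_g$-invariants is exact, so that it commutes with cohomology. Consequently,
\[
H^k\bigl(\gr^{(W,H)} G^g_{M,N}\bigr)
= H^k\Bigl(\bigl(\gr^{(W,H)}\G_{(g),1}\otimes V_g^{\otimes M}\otimes (V_g^*)^{\otimes N}\bigr)^{\GL_g}\Bigr)
\cong \bigl(H^k(\gr^{(W,H)}\G_{(g),1})\otimes V_g^{\otimes M}\otimes (V_g^*)^{\otimes N}\bigr)^{\GL_g}.
\]
The hypothesis states that the left hand side vanishes for all $M+N \leq W+2$, so the right hand side vanishes for all such $M,N$. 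Applying Lemma \ref{lem:rep_gl_g} to the representation $V = H^k(\gr^{(W,H)}\G_{(g),1})$, which as noted is of order $\leq W+2$, yields $H^k(\gr^{(W,H)}\G_{(g),1}) = 0$, proving the first assertion.

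For the ``In particular'' part, recall from Lemma/Definition \ref{lemdef:Ginfty} that the natural map $\gr^{(W,H)}G^\infty_{M,N} \to \gr^{(W,H)}G^g_{M,N}$ is an isomorphism as soon as $2g\geq W+M+N+2$. Restricting to the relevant range $M+N\leq W+2$, this stabilization condition is satisfied whenever $2g \geq 2W+4$, i.e.\ $g\geq W+2$. Hence, for such $g$, vanishing of $H^k(\gr^{(W,H)}G^\infty_{M,N})$ for all $M+N\leq W+2$ is equivalent to vanishing of $H^k(\gr^{(W,H)}G^g_{M,N})$ in the same range, and the first part applies to give $H^k(\gr^{(W,H)}\G_{(g),1}) = 0$. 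No step here poses a real obstacle, since the combinatorial bound $D\leq W+2$ and the reductivity of $\GL_g$ are both immediate; the only thing to get right is the quantitative stabilization threshold, which matches the hypothesis $g\geq W+2$ exactly.
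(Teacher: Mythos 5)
Your proposal is correct and follows essentially the same route as the paper: commuting $\GL_g$-invariants with cohomology, bounding the order of the representation by $W+2$ via $D=W-2(e-v)\leq W+2$, invoking Lemma \ref{lem:rep_gl_g}, and then combining with the stabilization threshold from Lemma/Definition \ref{lemdef:Ginfty} for the second assertion. The quantitative check $2g\geq W+(W+2)+2$ giving $g\geq W+2$ is exactly the computation implicit in the paper.
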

\begin{proof}
 By assumption we know that 
 \begin{align*}
 0&=H^k\left(\gr^{(W,H)}G_{M,N}^{g}\right)\\
 &=H^k\left(\left(gr^{(W,H)}\G_{(g),1}\otimes V_g^M\otimes \left(V_g^*\right)^N\right)^{\GL_g}\right)\\
 &=\left(H^k\left(gr^{(W,H)}\G_{(g),1}\right)\otimes V_g^M\otimes \left(V_g^*\right)^N\right)^{\GL_g}
 \end{align*}
 for all $M,N$ with $M+N\leq W+2$. In the last equation we used the fact that we can interchange taking the cohomology with taking the $\GL_g$-invariants. 
 
  Due to the upper bound for the number of decorations $D\leq W+2$ derived in the previous proof we know that $H^k\left(\gr^{(W,H)}\G_{(g),1}\right)$ is a $\GL_g$-representation of order $W+2$. Therefore, the first claim follows from Lemma \ref{lem:rep_gl_g}.
  
  The second statement of the Lemma is simply obtained by combining the first statement with the stabilization result Lemma/Definition \ref{lemdef:Ginfty}.
 \end{proof}

\subsection{Auxiliary graph complex $G_{M,N}^\infty$}
\label{sec:combinatorial_G}
The complex $G_{M,N}^\infty$ defined above has a combinatorial description as a graph complex, as can be seen from the proof of Lemma/Definition \ref{lemdef:Ginfty}.

Concretely, this auxiliary graph complex $G_{M,N}^\infty$ is composed of graphs with two different types of vertices, internal vertices and external vertices.
%\[
%\begin{tikzpicture}
%  \node[int,label=90:{internal}] at (0,0) {};
%  \node[ext,label=90:{external},minimum size=0.2cm] at (2,0) {};
%  \end{tikzpicture}
%\]
It has two different types of edges, solid undirected edges connecting internal vertices as well as dashed directed edges. The latter can start at internal or external vertices and point to internal or external vertices. Internal vertices have total valence at least three and external vertices have valence one with one incoming or one outgoing dashed edge.
\[
\begin{tikzpicture}
  \node[int] (int) at (0,0) {};
  \draw (int) edge (-135: 0.75) (int) edge (-45: 0.75) (int) edge (-90: 0.75);
  \draw[->] (int) edge[dashed] (180: 0.75) (int) edge[dashed] (135: 0.75) (int) edge[dashed] (90: 0.75);
  \path[->] (45: 0.75) edge[dashed] (int);
  \path[->] (0: 0.75) edge[dashed] (int);
  \node[ext, minimum size=0.2cm] (ext1) at (2,0.25) {$\scriptstyle j$};
  \path[->] (ext1) edge[dashed] (1.2,0.25);
  \node[ext, minimum size=0.2cm] (ext2) at (2,-0.25) {$\scriptstyle j$};
  \path[->] (1.2,-0.25) edge[dashed] (ext2);
  \end{tikzpicture}
\]
In total each graph has $M$ external vertices with an outgoing dashed edge and $N$ external vertices with an ingoing dashed edge:
\[
M\times\left(
	\begin{tikzpicture}
		\node[ext, minimum size=0.2cm] (e2) at (0,0) {};
		\path[->] (e2) edge[dashed] (-0.75,0);
  	\end{tikzpicture}
  	\right)
  	\quad\quad\quad
  	N\times\left(
  	\begin{tikzpicture}
		\node[ext, minimum size=0.2cm] (e2) at (0,0) {};
		\path[->] (-0.75,0) edge[dashed] (e2);
  	\end{tikzpicture}
  	\right)
  	\]
The following picture shows a graph of the auxiliary graph complex $G^\infty_{2,3}$:
\[
\begin{tikzpicture}[baseline=-.65ex]
  \node[int] (v1) at (0, 0.5) {};
  \node[int] (v2) at (0.5, 0) {};
  \node[int] (v3) at (0, -0.5) {};
  \node[int] (v4) at (-0.5, 0) {};
  \draw (v1) edge (v2) edge (v3) edge (v4) (v2) edge (v3) edge (v4) (v3) edge (v4);
  \node[ext, minimum size=0.2cm] (e1) at (1.5, 1.6) {$\scriptstyle 1$};
  \node[ext, minimum size=0.2cm] (e2) at (1.5, 0.8) {$\scriptstyle 1$};
  \node[ext, minimum size=0.2cm] (e3) at (1.5, 0) {$\scriptstyle 2$};
  \node[ext, minimum size=0.2cm] (e4) at (1.5, -0.8) {$\scriptstyle 3$};
   \node[ext, minimum size=0.2cm] (e5) at (1.5, -1.6) {$\scriptstyle 2$};
  \path[->] (e1) edge[dashed] (v1);
  \path[->] (v1) edge[dashed] (e2);
  \path[->] (v2) edge[dashed] (e3);
  \path[->] (v2) edge[dashed] (e4);
  \path[->] (e5) edge[dashed] (v3);
  \draw[->](v1) [out=180, in=90] edge[dashed] (v4);
  \draw[->](v3) [out=0, in=-90] edge[dashed] (v2);
  \draw[->](v3) [out=180, in=-90] edge[dashed] (v4);
\end{tikzpicture}
\]
Note that the subgraph consisting of the internal vertices and the solid undirected edges must be connected.

The natural map $G_{M,N}^\infty\to G_{M,N}^g$ has the following combinatorial description. 
Let $\Gamma\in G_{M,N}^\infty$ be a graph as above. It has edges of two types (solid or dashed), but no decorations in $V_g$ or $V_g^*$. 
Then we replace each dashed edge (say $(u,v)$, from vertex $u$ to vertex $v$) by one copy of the $\frac 12$-diagonal element 
\[
\Delta^{\frac 12} =(-1)^m
\sum_{i=1}^g \alpha_i \otimes \beta_i
\]
where $V_g=span\{\alpha_1,\dots,\alpha_g\}$ and $V_g^*=span\{\beta_1,\dots,\beta_g\}$.
More precisely, the first factor $\alpha_i$ will be multiplied into the decoration of vertex $u$ and the second factor $\beta_i$ is multiplied into the decoration at vertex $v$.
Proceeding in this manner for all dashed edges, we obtain a linear combinations of graphs with no dashed edges but decorations in $V_g\oplus V_g^*$.
This linear combination is automatically $\GL_g$-invariant and hence defines an element of $G_{M,N}^g$, which is the image of the graph $\Gamma$.

The differential on $G_{M,N}^\infty$ has two components
\[
	d = d_{contract} + d_{cut}.
\]
$d_{contract}$ acts on all solid edges by contracting the edge 
\[
	d_{contract}\Gamma= \sum_{e} \Gamma/e.
\]
If the two internal vertices incident to the contracted edge $e$ are at position one and two in the ordering of the vertices, then the sign is positive and the differential acts on the decorations of the two vertices by multiplying them using the commutative product of the symmetric algebra $S\left(V_g\oplus V_g^*\right)$. In the graphical representation this means that all solid and dashed edges incident to the two vertices are finally connected to the remaining vertex. 

The second part of the differential, $d_{cut}$, acts also on all solid edges, cuts them and replaces them by the diagonal element $\Delta \in S^2(V_g\oplus V_g^*)$

\[
 	\Delta = (-1)^m \sum_{i=1}^g (\alpha_i\otimes \beta_i - \beta_i\otimes \alpha_i).
\]
In the graphical representation this corresponds to connecting the two internal vertices incident to the cut edge with dashed edges in either direction. This part of the differential vanishes if the subgraph consisting of the inner vertices and the solid undirected edges of the resulting graph is not connected anymore. 

Using the graphical notation the total differential of this graph complex is given as follows. We sum over all solid undirected edges $e$ and replace it by three terms. First, we contract the edge $e$ and connect all incident edges of the two affected vertices to the one remaining vertex. Second, we replace the solid undirected edge $e$ by directed dashed edges in ether direction, whereat the latter two terms have opposite sign. If the subgraph consisting of the internal vertices and the solid undirected edges of the resulting graph is not connected any more, then the second therm of the differential vanishes.
\[
	\begin{tikzpicture}
  		\node[int] (v1) at (0,0) {};
		\node[int] (v2) at (1,0) {};
		\draw (v1) edge node[midway, above]{e } (v2);
		\path[->] (-0.75,0) edge[dashed] (v1);
		\path[->] (v1) edge[dashed] (-0.5,0.5);
		\draw (-0.5,-0.5) edge (v1);
		\draw (0,-0.75) edge (v1);
		\path[->] (1.5,-0.5) edge[dashed] (v2);
		\path[->] (v2) edge[dashed] (1.75,0);
		\draw (1, 0.75) edge (v2);
		\draw (1.5, 0.5) edge (v2);
  	\end{tikzpicture}
  	\quad\mapsto\quad
  	\begin{tikzpicture}
  		\node[int] (v1) at (0,0) {};
		\path[->] (-0.75,0) edge[dashed] (v1);
		\path[->] (v1) edge[dashed] (-0.5,0.5);
		\draw (-0.5,-0.5) edge (v1);
		\draw (0,-0.75) edge (v1);
		\path[->] (0.5,-0.5) edge[dashed] (v1);
		\path[->] (v1) edge[dashed] (0.75,0);
		\draw (0, 0.75) edge (v1);
		\draw (0.5, 0.5) edge (v1);
  	\end{tikzpicture}
  	\quad+\quad
	\begin{tikzpicture}
  		\node[int] (v1) at (0,0) {};
		\node[int] (v2) at (1,0) {};
		\draw[->] (v1) edge[dashed] (v2);
		\path[->] (-0.75,0) edge[dashed] (v1);
		\path[->] (v1) edge[dashed] (-0.5,0.5);
		\draw (-0.5,-0.5) edge (v1);
		\draw (0,-0.75) edge (v1);
		\path[->] (1.5,-0.5) edge[dashed] (v2);
		\path[->] (v2) edge[dashed] (1.75,0);
		\draw (1, 0.75) edge (v2);
		\draw (1.5, 0.5) edge (v2);
  	\end{tikzpicture}
  	\quad-\quad
  	\begin{tikzpicture}
  		\node[int] (v1) at (0,0) {};
		\node[int] (v2) at (1,0) {};
		\draw[->] (v2) edge[dashed] (v1);
		\path[->] (-0.75,0) edge[dashed] (v1);
		\path[->] (v1) edge[dashed] (-0.5,0.5);
		\draw (-0.5,-0.5) edge (v1);
		\draw (0,-0.75) edge (v1);
		\path[->] (1.5,-0.5) edge[dashed] (v2);
		\path[->] (v2) edge[dashed] (1.75,0);
		\draw (1, 0.75) edge (v2);
		\draw (1.5, 0.5) edge (v2);
  	\end{tikzpicture}
\]

\subsection{Transformed auxiliary graph complex $\tilde{G}_{M,N}^\infty$}
\label{sec:auxiliary_G}
Let us now define a new transformed auxiliary graph complex $\tilde{G}_{M,N}^\infty$ which is isomorphic to the previous graph complex $G_{M,N}^\infty$ by applying the following coordinate transformation to dashed edges incident to internal vertices only.
\[
	\begin{tikzpicture}
  		\node[int] (v1) at (0,0) {};
		\node[int] (v2) at (1,0) {};
		\draw (v1) edge[dashed] node[midway, above]{$\oplus$} (v2);
  	\end{tikzpicture}
  	~:=~
  	\begin{tikzpicture}
  		\node[int] (v1) at (0,0) {};
		\node[int] (v2) at (1,0) {};
		\draw[->] (v1) edge[dashed] (v2);
  	\end{tikzpicture}
  	~+~
  	\begin{tikzpicture}
  		\node[int] (v1) at (0,0) {};
		\node[int] (v2) at (1,0) {};
		\draw[->] (v2) edge[dashed] (v1);
  	\end{tikzpicture} 	
\]
\[
	\begin{tikzpicture}
  		\node[int] (v1) at (0,0) {};
		\node[int] (v2) at (1,0) {};
		\draw (v1) edge[dashed] node[midway, above]{$\ominus$} (v2);
  	\end{tikzpicture}
  	~:=~
  	\begin{tikzpicture}
  		\node[int] (v1) at (0,0) {};
		\node[int] (v2) at (1,0) {};
		\draw[->] (v1) edge[dashed] (v2);
  	\end{tikzpicture}
  	~-~
  	\begin{tikzpicture}
  		\node[int] (v1) at (0,0) {};
		\node[int] (v2) at (1,0) {};
		\draw[->] (v2) edge[dashed] (v1);
  	\end{tikzpicture}
\]

In order to define the differential on the new transformed graph complex we sum again over all solid edges $e$, contract them and replace them by a $\ominus$-edge.
\[
	\begin{tikzpicture}
  		\node[int] (v1) at (0,0) {};
		\node[int] (v2) at (1,0) {};
		\draw (v1) edge node[midway, above]{e} (v2);
		\path[->] (-0.75,0) edge[dashed] (v1);
		\path[->] (v1) edge[dashed] (-0.5,0.5);
		\draw (-0.5,-0.5) edge (v1);
		\draw (0,-0.75) edge[dashed] (v1);
		\path[->] (1.5,-0.5) edge[dashed] (v2);
		\path[->] (v2) edge[dashed] (1.75,0);
		\draw (1, 0.75) edge[dashed] (v2);
		\draw (1.5, 0.5) edge (v2);
  	\end{tikzpicture}
  	\quad \mapsto\quad
  	\begin{tikzpicture}
  		\node[int] (v1) at (0,0) {};
		\path[->] (-0.75,0) edge[dashed] (v1);
		\path[->] (v1) edge[dashed] (-0.5,0.5);
		\draw (-0.5,-0.5) edge (v1);
		\draw (0,-0.75) edge[dashed] (v1);
		\path[->] (0.5,-0.5) edge[dashed] (v1);
		\path[->] (v1) edge[dashed] (0.75,0);
		\draw (0, 0.75) edge[dashed] (v1);
		\draw (0.5, 0.5) edge (v1);
  	\end{tikzpicture}
  	\quad+\quad
	\begin{tikzpicture}
  		\node[int] (v1) at (0,0) {};
		\node[int] (v2) at (1,0) {};
		\draw (v1) edge[dashed] node[midway, above]{$\ominus$} (v2);
		\path[->] (-0.75,0) edge[dashed] (v1);
		\path[->] (v1) edge[dashed] (-0.5,0.5);
		\draw (-0.5,-0.5) edge (v1);
		\draw (0,-0.75) edge[dashed] (v1);
		\path[->] (1.5,-0.5) edge[dashed] (v2);
		\path[->] (v2) edge[dashed] (1.75,0);
		\draw (1, 0.75) edge[dashed] (v2);
		\draw (1.5, 0.5) edge (v2);
  	\end{tikzpicture}
\]

\subsection{Cohomology of the auxiliary graph complex $gr^{W,H}\tilde{G}_{M,N}^\infty$}
 Let us consider the bounded filtration of $gr^{W,H}\tilde{G}_{M,N}^\infty$ with respect to the number of internal vertices $v$ and consider the associated graded complex $gr^vgr^{W,H}\tilde{G}_{M,N}^\infty$. The differential $d^v$ on this graded complex acts by summing over all solid edges and replacing them with $\ominus$-edges.

\[
	\begin{tikzpicture}
  		\node[int] (v1) at (0,0) {};
		\node[int] (v2) at (1,0) {};
		\draw (v1) edge node[midway, above]{e} (v2);
		\path[->] (-0.75,0) edge[dashed] (v1);
		\path[->] (v1) edge[dashed] (-0.5,0.5);
		\draw (-0.5,-0.5) edge (v1);
		\draw (0,-0.75) edge[dashed] (v1);
		\path[->] (1.5,-0.5) edge[dashed] (v2);
		\path[->] (v2) edge[dashed] (1.75,0);
		\draw (1, 0.75) edge[dashed] (v2);
		\draw (1.5, 0.5) edge (v2);
  	\end{tikzpicture}
  	\quad \mapsto\quad
	\begin{tikzpicture}
  		\node[int] (v1) at (0,0) {};
		\node[int] (v2) at (1,0) {};
		\draw (v1) edge[dashed] node[midway, above]{$\ominus$} (v2);
		\path[->] (-0.75,0) edge[dashed] (v1);
		\path[->] (v1) edge[dashed] (-0.5,0.5);
		\draw (-0.5,-0.5) edge (v1);
		\draw (0,-0.75) edge[dashed] (v1);
		\path[->] (1.5,-0.5) edge[dashed] (v2);
		\path[->] (v2) edge[dashed] (1.75,0);
		\draw (1, 0.75) edge[dashed] (v2);
		\draw (1.5, 0.5) edge (v2);
  	\end{tikzpicture}
\]
The differential vanishes if the subgraph consisting of the internal vertices and solid edges would be split in two parts. 

\begin{lemma}
	The cohomology of the associated graded complex $\left(gr^vgr^{W,H}\tilde{G}_{M,N}^\infty,d^v\right)$ consists of graphs which are solid trees, i.e. the subgraph consisting of the internal vertices and the solid edges forms a tree. 
\end{lemma}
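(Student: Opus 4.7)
The plan is to decompose $(gr^v gr^{W,H}\tilde{G}_{M,N}^\infty, d^v)$ into a direct sum of small complexes of the form $C_\Gamma$ introduced in Section~\ref{sec:CGamma}, and then apply Lemma~\ref{lem:CGamma}.

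To this end, I would fix a ``skeleton'' consisting of all the combinatorial data of a basis graph except for the solid vs.\ $\ominus$-dashed labels on its internal-to-internal edges. More precisely, a skeleton specifies the set of internal vertices (together with their decorations), the labeled external vertices with their incident directed dashed edges, the $\oplus$-dashed internal-to-internal edges (with their positions in the edge ordering), and finally the endpoints and ordering positions of the remaining $k$ ``variable'' internal-to-internal edges (which will be filled by a solid or a $\ominus$-dashed label). Since $d^v$ only exchanges solid labels for $\ominus$-dashed labels on such variable edges, it preserves the skeleton, and hence the complex splits as
\begin{equation*}
  gr^v gr^{W,H}\tilde{G}_{M,N}^\infty \;\cong\; \bigoplus_{\text{skeleta}} V_{\text{skeleton}}.
\end{equation*}

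The next step is to identify each summand $V_{\text{skeleton}}$ with a complex $C_{\Gamma'}$ of Section~\ref{sec:CGamma}, where $\Gamma'$ is the graph on \emph{all} internal vertices whose edges are exactly the $k$ variable internal-to-internal edges of the skeleton. A solid/$\ominus$-dashed assignment on those $k$ edges is precisely a basis element of $C^{\otimes k}$, with solid corresponding to the factor $\Q[1]$ and $\ominus$-dashed to $\Q$. Crucially, the connectedness constraint imposed in the definition of $\tilde{G}_{M,N}^\infty$ -- that the subgraph on internal vertices with solid edges only is connected -- matches exactly the quotient by $I_{disc}$ defining $C_{\Gamma'}$, because $\oplus$-dashed edges (which are part of the frozen skeleton) do not count as solid and hence do not contribute to the connectivity condition in either setup. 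Under this identification the differential $d^v$ agrees (up to overall signs from the edge ordering) with the tautological differential of $C_{\Gamma'}$.

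Having done this, Lemma~\ref{lem:CGamma} finishes the job: each $H(C_{\Gamma'})$ is concentrated in its top cohomological degree and is represented by configurations in which the solid edges form a spanning tree of $\Gamma'$ on the internal vertices. Summing over all skeleta, the cohomology of $gr^v gr^{W,H}\tilde{G}_{M,N}^\infty$ is represented by graphs in which the subgraph on internal vertices and solid edges is a tree, as claimed. The main thing to double-check is the matching of the connectivity constraints and the compatibility of the skeleton decomposition with the $(W,H)$-grading; the latter is automatic since replacing solid by $\ominus$-dashed changes neither the number of edges, nor the number of decorations, nor the number of internal vertices, so it preserves $W$, $H$, and $v$ simultaneously.
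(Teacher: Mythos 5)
Your proposal is correct and follows the same route as the paper: the paper's own (very brief) proof simply identifies $\left(gr^vgr^{W,H}\tilde{G}_{M,N}^\infty,d^v\right)$ with the two-colored edge complexes $C_\Gamma$ of section \ref{sec:CGamma} and invokes Lemma \ref{lem:CGamma}. Your skeleton decomposition, the matching of the solid-connectivity constraint with the quotient by $I_{disc}$, and the degree bookkeeping are exactly the details the paper leaves implicit.
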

\begin{proof}
	The considered complex $\left(gr^vgr^{W,H}\tilde{G}_{M,N}^\infty,d^v\right)$ corresponds 
	% \todo{imprecise} 
	to the graph complex with two coloured edges as described before in section \ref{sec:CGamma}. Hence, its cohomology consists of graphs which are solid trees by Lemma \ref{lem:CGamma}. 
\end{proof}

Therefore, we get a lower bound for the degree $k$ part of the comohology of the graded complex 
\[
H^k\left(gr^vgr^{W,H}\tilde{G}_{M,N}^\infty,d^v\right).
\]

\begin{lemma}
		$H^k\left(gr^vgr^{W,H}\tilde{G}_{M,N}^\infty,d^v\right)=0~,~\text{for}~k< - k_{crit}^L(W,H)$.
\end{lemma}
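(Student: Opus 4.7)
The previous lemma identifies the cohomology of $(gr^v gr^{W,H} \tilde G^\infty_{M,N}, d^v)$ with classes represented by graphs whose solid subgraph is a tree on the $v$ internal vertices. It therefore suffices to exhibit an explicit lower bound on the cohomological degree $k$ of any such solid-tree representative of weight $W$ and imbalance $H$. The plan is to compute this degree directly and then apply the trivalence condition to bound the vertex count in terms of $W$.

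First, I fix a solid tree $\Gamma$ with $v$ internal vertices, so that $e_s = v-1$. Let $e_{d,ii}$ denote the number of internal-internal dashed edges, and recall the $M$ (resp.\ $N$) dashed edges to externals. Under $\Delta^{1/2}$-substitution each internal-internal dashed edge contributes one decoration in $V_g$ and one in $V_g^*$, while each external dashed edge contributes one decoration together with one external tensor factor. The corresponding graph in $\G_{(g),1} \otimes V_g^M \otimes (V_g^*)^N$ has $v$ vertices, $v-1$ edges, $a = e_{d,ii} + N$ decorations in $V_g$ and $b = e_{d,ii} + M$ decorations in $V_g^*$ (in particular $H=a-b=N-M$, as expected). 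Using the degree formula $k_\G = 2me -(2m+1)v + ma + (m+1)b$ from Section~\ref{sec:GCs} and adding the external shift $mM+(m+1)N$, the degree simplifies to
\[
  k = -v - 2m + (2m+1)(e_{d,ii} + M + N),
\]
while the weight relation $W = 2(e_s-v)+(a+b) = -2 + 2e_{d,ii}+M+N$ expresses $e_{d,ii}$ in terms of $W$, $M$, $N$.

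Second, the trivalence of internal vertices gives $2 e_s + 2 e_{d,ii} + (M+N) \geq 3v$; together with $e_s = v-1$, this forces $v \leq W$. The minimum of $k$ is therefore attained at $v=W$, and after substitution it evaluates to $-k_{crit}^L(W,H) + mM + (m+1)N + 1$, which is strictly larger than $-k_{crit}^L(W,H)$. Hence no solid tree lives in degree below $-k_{crit}^L(W,H)$, which yields the lemma.

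The main obstacle is the careful bookkeeping relating the combinatorial data of the auxiliary complex (solid vs.\ dashed, internal-internal vs.\ external edges) to the degree contributions in $\G_{(g),1} \otimes V_g^M \otimes (V_g^*)^N$: in particular one must track that each dashed edge produces either two decorations or one decoration plus one external leg depending on incidence, and that the $V_g$, $V_g^*$ factors carry their own cohomological shifts. Once this dictionary is set up, both the degree formula and the trivalence bound $v \leq W$ fall out immediately, and the final inequality reduces to the manifest non-negativity of $mM + (m+1)N + 1$.
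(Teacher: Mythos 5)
Your overall strategy is the same as the paper's: reduce to solid-tree representatives via the preceding lemma, compute the cohomological degree of such a representative, and use trivalence together with $e_s=v-1$ to obtain $v\leq W$. The reduction and the bound $v\leq W$ are correct. However, the degree computation contains two bookkeeping errors that together inflate your $k$ by $1+mM+(m+1)N$. First, there is no ``external shift'': in the definition $G^g_{M,N}=\left(\G_{(g),1}\otimes V_g^{M}\otimes \left(V_g^*\right)^{N}\right)^{\GL_g}$ the tensor factors $V_g^{M}\otimes\left(V_g^*\right)^{N}$ sit in cohomological degree $0$ --- this is exactly what allows Lemma \ref{lem:G_vanishing} to transfer degree-$k$ vanishing of $G^g_{M,N}$ back to degree-$k$ vanishing of $\G_{(g),1}$. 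Second, the formula $2me-(2m+1)v+ma+(m+1)b$ from section \ref{sec:GCs} is the degree in $\fG_{(g),1}$; since $\G_{(g),1}[-1]\subset\fG_{(g),1}$, the degree in $\G_{(g),1}$ (hence in $G^\infty_{M,N}$) is that number minus $1$.

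These errors are not harmless: your conclusion that every solid tree has degree at least $-k_{crit}^L(W,H)+mM+(m+1)N+1>-k_{crit}^L(W,H)$ would force the cohomology to vanish \emph{in} the critical degree as well, which is false --- consider the weight-one generator with one internal vertex and three dashed legs to external vertices ($v=W=1$, $e_s=0$, a solid tree), which sits exactly in degree $-k_{crit}^L(1,H)$. The correct count gives
\[
  k=-v+\tfrac{2m+1}{2}W-\tfrac{H}{2},
\]
whose minimum over $v\leq W$ is exactly $-k_{crit}^L(W,H)$, attained at $v=W$; this still yields the lemma (vanishing strictly below the critical degree) but nothing stronger. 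So your argument is repairable by redoing the degree bookkeeping, after which it coincides with the paper's proof.
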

\begin{proof}
Let $a$ denote the number of decorations with an element of $H^m(W_{g,1})$, and $b$ the number of decorations with an element of $H^{m+1}(W_{g,1})$ of the related Graph in the original graph complex $\G_{(g),1}$. Furthermore, let $v$ denote the number of internal vertices and $e$ the number of solid edges. Because the internal vertices are at least trivalent we can deduce an upper bound for $v$
	\[
		3v\leq a+b+2e.
	\]
	The imbalance is given as $H=a-b$ and the weight as $W=2(e-v)+a+b$. Therefore, the upper bound for $v$ is equivalent to
	\[ 
		v\leq a+b+2(e-v)=W.
	\]
	The degree is given by 
	\[
		k=-(nv-(n-1)e-am-b(m+1)+1)=-nv+(n-1)e+am+b(m+1)-1.
	\]
	Furthermore, since the solid subgraph forms a tree we have $e=v-1$ and therefore the degree simplifies to
	\[
		k=-v-n+am+b(m+1)
	\]
	and the weight to
	\[
		W=a+b-2.
	\]
	We express $a$ and $b$ in terms of $W$ and $H$
	\[
	\begin{aligned}
		a&=\frac{1}{2}(W+H)+1\\
		b&=\frac{1}{2}(W-H)+1.
	\end{aligned}
	\]
	Together with the upper bound for the number of vertices $v$ this leads to a lower bound for the degree $k$ (note that $n=2m+1$)
	\[
		k\geq -W + \frac{1}{2}m(W+H)+\frac{1}{2}(m+1)(W-H)= \frac{W}{2}(2m-1)-\frac{H}{2}=-k_{crit}^L(W,H).
	\]	
	\end{proof}
	
	We can then consider the spectral sequence starting with the associated graded complex induced by the filtration with respect to the number of vertices $v$. Since the cohomology on the first page vanishes for $k <-k^L_{crit}$, the spectral sequence converges towards the cohomology of the initial complex $gr^{W,H}\tilde{G}_{M,N}^\infty$ which is therefore acyclic for $k <-k^L_{crit}$. We can therefore deduce the following lemma:
	
\begin{lemma}
\label{lem:G infty acyclic}
	$H^k\left(gr^{W,H}G_{M,N}^\infty\right)=0~,~\text{for}~k< - k_{crit}^L(W,H)$.
\end{lemma}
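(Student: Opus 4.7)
The plan is to deduce the lemma from the already-established vanishing on the associated graded, via a bounded spectral sequence argument.

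First I would use the change of basis for internal dashed edges described in Section~\ref{sec:auxiliary_G}, which gives an isomorphism of dg vector spaces $G_{M,N}^\infty \cong \tilde{G}_{M,N}^\infty$ that respects the weight–imbalance bigrading and the cohomological grading. Hence it suffices to prove that $H^k(gr^{W,H}\tilde{G}_{M,N}^\infty) = 0$ for $k < -k_{crit}^L(W,H)$.

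Next I would equip $gr^{W,H}\tilde{G}_{M,N}^\infty$ with the decreasing filtration indexed by minus the number of internal vertices $v$. Inspecting the differential of Section~\ref{sec:auxiliary_G}, the contracting piece strictly decreases $v$ while the $\ominus$-edge replacement preserves $v$, so this is indeed a filtration, and its associated graded differential is precisely the $d^v$ of the preceding lemma. The key point is boundedness of the filtration on each bigraded piece: since every internal vertex is at least trivalent we have $3v \leq a+b+2e$, which combined with $W = 2(e-v) + a + b$ rearranges to $v \leq W$. Thus on $gr^{W,H}\tilde{G}_{M,N}^\infty$ the filtration has only finitely many nontrivial steps, and the resulting spectral sequence converges strongly to $H^\bullet(gr^{W,H}\tilde{G}_{M,N}^\infty)$.

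Since the filtration preserves the cohomological degree, the $E_1$ page is simply $H^\bullet(gr^v gr^{W,H}\tilde{G}_{M,N}^\infty, d^v)$ stratum by stratum, which by the preceding lemma vanishes for $k < -k_{crit}^L(W,H)$. This vanishing is stable under all higher differentials and hence persists to the abutment, proving the lemma. Alternatively, the same conclusion follows from Lemma~\ref{lemma:graded} applied to the subcomplex of $gr^{W,H}\tilde{G}_{M,N}^\infty$ concentrated in cohomological degrees $< -k_{crit}^L(W,H)$, whose associated graded is acyclic by the previous lemma.

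The only subtle point is the boundedness of the filtration, which is exactly where the trivalence hypothesis on internal vertices enters; everything else is formal homological algebra, so I do not anticipate a serious obstacle beyond careful bookkeeping.
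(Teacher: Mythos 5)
Your argument is correct and coincides with the paper's own proof: both pass to $\tilde{G}_{M,N}^\infty$, filter by the number of internal vertices so that the associated graded differential is the $\ominus$-replacement $d^v$, and invoke the vanishing of $H^k$ of the associated graded for $k<-k_{crit}^L(W,H)$ together with convergence of the (bounded) spectral sequence. Your explicit verification that $v\leq W$ bounds the filtration is a welcome detail the paper leaves implicit, but the route is the same.
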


\subsection{Proof of part (ii) of Theorem \ref{thm:main cohom GC vanishing}}
We like to show that for all $W\geq 1$, $g\geq W+2$ and $H\in \Z$   
\[
    \gr^{(W,H)} H^{k}\left(\GC_{(g),1}\right) = 0 ~,~\text{for}~ k>k_{crit}^L(W,H).
\] 
Since the bigrading is compatible with the differential this corresponds to 
\[
	H^k\left(gr^{(W,H)}\GC_{(g),1}\right)=0~,~\text{for}~ k>k_{crit}^L(W,H).
\]
and to the corresponding expression for its graded dual complex
\[
	H^k\left(gr^{(W,H)}\G_{(g),1}\right)=0~,~\text{for}~ k<-k_{crit}^L(W,H).
\]
By Lemma \ref{lem:G_vanishing} it is sufficient to check that 
\[
H^k\left(\gr^{(W,H)} G^\infty_{M,N} \right)=0
\] 
for $k< -k^L_{crit}(W,H)$ and for any $M,N$ such that $M+N\leq W+2$.
The latter follows from Lemma \ref{lem:G infty acyclic}, which proves part (ii) of Theorem \ref{thm:main cohom GC vanishing}.

%which is the case if 
%\[
%    \gr^{(W,H)} H^{k}\left(\GCex_{(g),1}\right) = 0.
%\] 
%The latter is equivalent to the corresponding formulation for its graded dual complex 
%  \[
%    \gr^{(W,H)} H^{k}\left(\left(\GCex_{(g),1}\right)^c\right) = 0 
%    ~,~\text{for}~
%    k<-k_{crit}^L(W,H).
%  \] 
% Since the bigrading is compatible with the differential this corresponds to 
%\[
%	H^k\left(gr^{(W,H)}\left(\GCex_{(g),1}\right)^c\right)=0.
%\]
%
%As a vector space $\left(\GCex_{(g),1}\right)^c\cong\left(\osp_{g,1}^{nil}\right)^*\oplus \G_{(g),1}$.  
%In section \ref{sec: GCg} it was shown than an element of $\osp_{g,1}^{nil}$ has cohomological degree $-1=k_{crit}^L(0,-2)$. Hence an element of $\left(\osp_{g,1}^{nil}\right)^*$ has cohomological degree $-k_{crit}^L$. Therefore, it is enough to show that
%\[
%H^k\left(\gr^{(W,H)}\G_{(g),1}\right)=0
%    ~,~\text{for}~
%    k<-k_{crit}^L(W,H).
%\]
%By Lemma \ref{lem:G_vanishing} it is sufficient to check that 
%\[
%H^k\left(\gr^{(W,H)} G^\infty_{M,N} \right)=0.
%\] 
%for $k< -k^L_{crit}(W,H)$ and for any $M,N$ such that $M+N\leq W+2$.
%The latter follows from Lemma \ref{lem:G infty acyclic} which proves part (ii) of Theorem \ref{thm:main cohom GC vanishing}.
%
\section{Chevalley-Eilenberg complex of $\GC_{(g),1}$}

\subsection{Chevalley-Eilenberg complex}
 We consider the Chevalley-Eilenberg complex 
 $$
 C_{CE}\left(\GC_{(g),1}\right)
 :=
 \Bar^c\left(\GC_{(g),1}^c\right)
 =\Bar^c\left(\G_{(g),1}\right)=\fG_{(g),1} \oplus \Q,
 $$
 defined as the cobar construction of the graded dual of the dg Lie algebra $\GC_{(g),1}$, or equivalently via the graph complex $\fG_{(g),1}$ introduced in section \ref{sec:GCs}.
As a graded vector space it is isomorphic to the symmetric algebra
\[
	S\left(\left(\GC_{(g),1}\right)^c[-1]\right) = S\left(\G_{(g),1}[-1] \right).
\]
Again, let $V_g\cong H^m(W_{g,1})$ and $V_g^*\cong H^{m+1}(W_{g,1})$ correspond to the defining and adjoint representations of $\GL_g$ respectively. Every vertex of a graph in $\G_{(g),1}$ is decorated by elements of $S\left(\bar{H}(W_{g,1})\right)\cong S\left(V_g\oplus V_g^*\right)$.

We consider the weight $W$ and imbalance $H$ part of the comohology of the Chevalley-Eilenberg complex $gr^{(W,H)}H\left(C_{CE}\left(\GC_{(g),1}\right)\right)$. Since the bigrading is compatible with the differential this is isomorphic to \\$H\left(gr^{(W,H)}C_{CE}\left(\GC_{(g),1}\right)\right)$. 
 
\subsection{Stabilisation for large $g$}
\label{sec:stabil_F}

In order to apply the invariant theory for $\GL_g$ we study the auxiliary graph complex

\[
F^g_{M,N} :=\left(C_{CE}\left(\GC_{(g),1}\right)\otimes V_g^M\otimes \left(V_g^*\right)^N\right)^{\GL_g}
\]
and its graded version
\[
gr^{(W,H)} F^g_{M,N} =gr^{(W,H)}\left(C_{CE}\left(\GC_{(g),1}\right)\otimes V_g^M\otimes \left(V_g^*\right)^N\right)^{\GL_g}=\left(gr^{(W,H)}C_{CE}\left(\GC_{(g),1}\right)\otimes V_g^M\otimes \left(V_g^*\right)^N\right)^{\GL_g}.
\]

Like in the previous section \ref{sec:stabil_G} we have sequence of natural projection maps 
\begin{equation}
\label{equ:Fgtower}
	\cdots \to F^{g+1}_{M,N} \to F^{g}_{M,N}\to F^{g-1}_{M,N} \to \cdots\, .
\end{equation}

\begin{lemdef}
\label{lemdef:Finfty}
	The sequence above stabilizes in each finite weight.
	That is, for each $M,N,W,H$ and $2g\geq 3W+M+N$ the map 
	\[
		gr^{(W,H)}F_{M,N}^{g+1} \to gr^{(W,H)}F_{M,N}^{g}
	\]
	is an isomorphism.
	We define the bigraded dg vector space 
	\[
		F_{M,N}^{\infty} = \bigoplus_{W,H} 	gr^{(W,H)}F_{M,N}^{\infty}
	\]
	such that 
	\[
		gr^{(W,H)}F_{M,N}^{\infty} := gr^{(W,H)}F_{M,N}^{g}
	\]
	for any $g\geq \frac12( 3W+M+N)$. In other words, $F_{M,N}^{\infty}$ is the weight-wise limit of the tower \eqref{equ:Fgtower}.
\end{lemdef}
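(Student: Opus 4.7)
The plan is to follow the template of the proof of Lemma/Definition \ref{lemdef:Ginfty} verbatim, with one additional combinatorial input that accounts for the extra symmetric-product structure of the Chevalley-Eilenberg complex. First I would use the identification $C_{CE}(\GC_{(g),1}) \cong \fG_{(g),1} \oplus \Q \cong S(\G_{(g),1}[-1]) \oplus \Q$ to rewrite $F^g_{M,N}$ as a direct sum, indexed by isomorphism classes of undecorated ``multi-cores'' $\Gamma = \Gamma_1 \sqcup \cdots \sqcup \Gamma_n$ (finite multisets of connected at-least-trivalent cores), of spaces of the form
\[
\left(\Bigl(\bigoplus_{\alpha,\beta \colon V\Gamma \to \Z_{\geq 0}\atop \alpha(v)+\beta(v)\geq k(v)} V_g^{\otimes \sum_v \alpha(v)} \otimes (V_g^*)^{\otimes \sum_v \beta(v)} \otimes V_g^{\otimes M} \otimes (V_g^*)^{\otimes N}\Bigr)^{\GL_g}\right)_{\mathrm{sym}} \otimes W_\Gamma ,
\]
where the outer coinvariants are with respect to $\Aut(\Gamma)$ and the internal $\prod_v (S_{\alpha(v)}\times S_{\beta(v)})$-action, exactly as in the previous lemma.

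Next I would apply Theorem \ref{fft}: with $A = \sum_v \alpha(v) + M$ and $B = \sum_v \beta(v) + N$, the invariant space vanishes unless $A = B$ and stabilizes (independently of $g$) as soon as $g \geq A = B = \tfrac12(A+B)$. Hence it suffices to produce an upper bound on the total number of decorations $D = \sum_v (\alpha(v)+\beta(v))$ in terms of the weight $W$.

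The crucial combinatorial input is the following. For a connected at-least-trivalent graph with $v$ vertices, $e$ edges and $D$ decorations, the trivalence inequality $2e + D \geq 3v$ combined with $e \geq v-1$ gives
\[
W = 2(e-v) + D \geq v \geq 1, \qquad D = W + 2(v-e) \leq W + 2.
\]
For a disjoint union of $n$ connected components with weights $w_1, \dots, w_n$ summing to $W$, the first inequality yields $n \leq W$, and summing the second gives
\[
D_{\mathrm{tot}} \leq \sum_i (w_i+2) = W + 2n \leq 3W.
\]
Thus $A + B = D_{\mathrm{tot}} + M + N \leq 3W + M + N$, so the stated bound $2g \geq 3W + M + N$ forces $g \geq \max(A,B) = (A+B)/2$ on every summand of given weight $W$. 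By Theorem \ref{fft} the map $gr^{(W,H)}F_{M,N}^{g+1} \to gr^{(W,H)}F_{M,N}^{g}$ is therefore an isomorphism in this range, which is exactly the stabilization claim and justifies the definition of $F_{M,N}^\infty$ as the weight-wise limit.

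The only step that differs from Lemma/Definition \ref{lemdef:Ginfty} is the decoration bound $D \leq 3W$ (versus $D \leq W+2$ there), and this is where I expect the main, if mild, obstacle: one must justify the per-component weight bound $w_i \geq 1$ and the resulting factor of $3$, both of which come cleanly from the trivalence condition as above. Everything else --- the commutation of $\GL_g$-invariants with coinvariants on finite groups, the triviality of the $\GL_g$-action on $W_\Gamma$, and the passage from per-summand stabilization to stabilization of $gr^{(W,H)}F^g_{M,N}$ --- transfers without change.
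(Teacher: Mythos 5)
Your proposal is correct and follows essentially the same route as the paper: decompose by cores, apply Theorem \ref{fft}, and bound the total number of decorations by $D\leq 3W$ so that $2g\geq 3W+M+N$ guarantees stabilization. The only (cosmetic) difference is how that bound is obtained --- you argue per connected component using $e\geq v-1$ and $n\leq W$, whereas the paper gets $D\leq 3W$ in one line from the global inequalities $3v\leq 2e+D$ and $e\geq 0$, with no need to pass to components.
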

Note that we in particular obtain natural maps 
\[
	F_{M,N}^{\infty} \to F_{M,N}^{g}
\]
for every $g$.

\begin{proof}
The proof is identical to the one for Lemma \ref{lemdef:Ginfty} with the difference that we consider the Chevalley-Eilenberg complex $C_{CE}\left(\GC_{(g),1}\right)$ and hence not only have connected graphs in $\G_{(g),1}$ but also graphs with several connected components in $\fG_{(g),1}$.

The same argument based on fundamental theorems of invariant theory for $\GL_g$ as used in section \ref{sec:stabil_G} leads to a stabilisation of $gr^{(W,H)}F_{M,N}^{g}$ for large enough $g$, i.e. it becomes independent of the genus $g$ and if $A\neq B$ it vanishes.

The condition $A=B$ leads to $H=a-b=N-M$. In order to derive a lower bound for the genus to ensure stabilisation of $gr^{(W,H)}F_{M,N}^{g}$, we use the fact that the vertices of the considered graphs are at least trivalent (including the decorations), $3v\leq 2e+D$. Hence, we get $3(v-e) \leq -e + D \leq D$, where we also used that $e\geq 0$. This leads to an upper bound for the weight $W=2(e-v) + a + b \geq -2/3 D + D$. Finally we reach an upper bound for the number of decorations, $D \leq 3W$.

If $2g\geq A+B=a+M+b+N=D+M+N$ the considered complex stabilises. Using the derived upper bound for $D$, this is fulfilled if $2g\geq 3W+M+N$.
\end{proof}

The complexes $F_{M,N}^{g}$ have the following significance:
\begin{lemma}\label{lem:F and CE vanishing}
If $H^k\left(\gr^{(W,H)}F_{M,N}^{g}\right)=0$ for fixed $g,W,H,k$ and all $M,N$ such that $M+N\leq 3W$ then 
\begin{equation}\label{equ:lem CE_GC_vanishing}
H^k\left(\gr^{(W,H)}C_{CE}(\GC_{(g),1})\right)=0.
\end{equation}
In particular, if $H^k\left(\gr^{(W,H)}F_{M,N}^{\infty}\right)=0$ for all $M,N$ such that $M+N\leq 3W$, then \eqref{equ:lem CE_GC_vanishing} holds for all $g$ such that $g\geq 3W$.
\end{lemma}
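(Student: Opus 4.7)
The plan is to mimic the proof of Lemma \ref{lem:G_vanishing}, with the only non-trivial change being the bound on the order of the $\GL_g$-representation.

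First I would note that $\GL_g$ is reductive, so taking $\GL_g$-invariants is an exact functor and commutes with taking cohomology. Hence
\[
H^k\left(\gr^{(W,H)}F_{M,N}^{g}\right)
= \left(H^k\left(\gr^{(W,H)}C_{CE}(\GC_{(g),1})\right)\otimes V_g^{\otimes M}\otimes \left(V_g^*\right)^{\otimes N}\right)^{\GL_g}.
\]
So the hypothesis of the lemma says precisely that these invariants vanish for all $M,N$ with $M+N\leq 3W$.

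The key step is to establish that $H^k(\gr^{(W,H)}C_{CE}(\GC_{(g),1}))$ is a $\GL_g$-representation of order at most $3W$, in the sense of Section~\ref{sec:notation}. Decorations on a graph $\Gamma$ in $\fG_{(g),1}$ lie in $S(V_g\oplus V_g^*)$, so each decoration contributes one tensor factor of $V_g$ or $V_g^*$ to the $\GL_g$-representation structure, and the total order is bounded by the total number of decorations $D=a+b$ summed over all connected components. The at-least-trivalence condition gives $3v\leq 2e + D$ on every connected component, hence $3(v-e)\leq D-e\leq D$; summed over components this yields $D\leq 3(e-v)+3D$, so $W=2(e-v)+D$ satisfies $D\leq 3W$ exactly as in the proof of Lemma~\ref{lemdef:Finfty}. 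Therefore $\gr^{(W,H)}C_{CE}(\GC_{(g),1})$, and hence its cohomology, is a $\GL_g$-representation of order at most $3W$.

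Now I would invoke Lemma~\ref{lem:rep_gl_g}: since the invariants
\[
\left(H^k\left(\gr^{(W,H)}C_{CE}(\GC_{(g),1})\right)\otimes V_g^{\otimes M}\otimes \left(V_g^*\right)^{\otimes N}\right)^{\GL_g}=0
\]
for all $M+N\leq 3W$, and the representation is of order at most $3W$, the representation itself must vanish. This gives the first assertion.

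For the second assertion, combine the first with Lemma/Definition~\ref{lemdef:Finfty}: when $g\geq \tfrac 12(3W+M+N)$ the stabilisation map $F_{M,N}^\infty\to F_{M,N}^g$ is an isomorphism on the $(W,H)$-part, and for $M+N\leq 3W$ this is guaranteed by $g\geq 3W$. Hence vanishing of $H^k(\gr^{(W,H)}F_{M,N}^\infty)$ for all such $(M,N)$ implies vanishing of $H^k(\gr^{(W,H)}F_{M,N}^g)$ and, by the first part, of $H^k(\gr^{(W,H)}C_{CE}(\GC_{(g),1}))$. No step here looks obstructive; the only point requiring slight care is the bookkeeping that the trivalence bound $D\leq 3W$ used for stabilisation is also precisely the bound one needs on the representation order to apply Lemma~\ref{lem:rep_gl_g}.
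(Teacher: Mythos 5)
Your proposal is correct and follows essentially the same route as the paper's own proof: interchange $\GL_g$-invariants with cohomology, use the trivalence bound $D\leq 3W$ to see the cohomology is a representation of order at most $3W$, apply Lemma \ref{lem:rep_gl_g}, and combine with Lemma/Definition \ref{lemdef:Finfty} for the second assertion. (One cosmetic slip: your intermediate inequality ``$D\leq 3(e-v)+3D$'' is not quite the one needed, but the correct bound $3(e-v)+D\geq 0$ does follow from your stated $3(v-e)\leq D-e\leq D$, so the conclusion $D\leq 3W$ stands.)
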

\begin{proof}
 By assumption we know that 
 \begin{align*}
 0&=H^k\left(\gr^{(W,H)}F_{M,N}^{g}\right)\\
 &=H^k\left(\left(gr^{(W,H)}C_{CE}\left(\GC_{(g),1}\right)\otimes V_g^M\otimes \left(V_g^*\right)^N\right)^{\GL_g}\right)\\
 &=\left(H^k\left(gr^{(W,H)}C_{CE}\left(\GC_{(g),1}\right)\right)\otimes V_g^M\otimes \left(V_g^*\right)^N\right)^{\GL_g}
 \end{align*}
 for all $M,N$ with $M+N\leq 3W$ and $g\geq 3W$. In the last equation we used the fact that we can interchange taking the cohomology with taking the $\GL_g$-invariants. 
 
  Due to the upper bound for the number of decorations $D\leq 3W$ we know that $H^k\left(\gr^{(W,H)}C_{CE}(\GC_{(g),1})\right)$ is a $\GL_g$-representation of order $3W$. Therefore, the claim follows from Lemma \ref{lem:rep_gl_g}. 
  
  The second statement of the Lemma is again obtained by combining the first statement with the stabilization result Lemma/Definition \ref{lemdef:Finfty}.

 \end{proof}

\subsection{Combinatorial form of $F_{M,N}^\infty$}
\label{sec:combinatorial_F}

The auxiliary graph complex $F_{M,N}^\infty$ defined above has a combinatorial description as a graph complex, as can be seen from the proof of Lemma/Definition \ref{lemdef:Finfty}. The combinatorial definition of the graph complex $F_{M,N}^\infty$ is identical to the one of $G_{M,N}^\infty$ as described in section \ref{sec:combinatorial_G} with the one difference that  the subgraph consisting of the internal vertices and the solid edges does not need to be connected.

%Concretely, $F_{M,N}^\infty$ is spanned by graphs with two different types of vertices, internal vertices and external vertices.
%It has two different types of edges, solid undirected edges connecting internal vertices as well as dashed directed edges. The latter can start at internal or external vertices and point to internal or external vertices. Internal vertices have total valence at least three whereas external vertices have valence one with one incoming or one outgoing dashed edge.
%\[
%\begin{tikzpicture}
%  \node[int] (int) at (0,0) {};
%  \draw (int) edge (-135: 0.75) (int) edge (-45: 0.75) (int) edge (-90: 0.75);
%  \draw[->] (int) edge[dashed] (180: 0.75) (int) edge[dashed] (135: 0.75) (int) edge[dashed] (90: 0.75);
%  \path[->] (45: 0.75) edge[dashed] (int);
%  \path[->] (0: 0.75) edge[dashed] (int);
%  \node[ext, minimum size=0.2cm] (ext1) at (2,0.25) {$\scriptstyle j$};
%  \path[->] (ext1) edge[dashed] (1.2,0.25);
%  \node[ext, minimum size=0.2cm] (ext2) at (2,-0.25) {$\scriptstyle j$};
%  \path[->] (1.2,-0.25) edge[dashed] (ext2);
%  \end{tikzpicture}
%\]
%In total each graph has $M$ external vertices with outgoing dashed edges and $N$ external vertices with ingoing dashed edges:
%\[
%M\times\left(
%	\begin{tikzpicture}
%		\node[ext, minimum size=0.2cm] (e2) at (0,0) {};
%		\path[->] (e2) edge[dashed] (-0.75,0);
%  	\end{tikzpicture}
%  	\right)
%  	\quad\quad\quad
%  	N\times\left(
%  	\begin{tikzpicture}
%		\node[ext, minimum size=0.2cm] (e2) at (0,0) {};
%		\path[->] (-0.75,0) edge[dashed] (e2);
%  	\end{tikzpicture}
%  	\right)
%  	\]

The following picture shows a graph of the auxiliary graph complex $F^\infty_{2,3}$.
\[
\begin{tikzpicture}[baseline=-.65ex]
  \node[int] (v1) at (0, 1.5) {};
  \node[int] (v2) at (0.5, 1) {};
  \node[int] (v3) at (0, 0.5) {};
  \node[int] (v4) at (-0.5, 1) {};
  \draw (v1) edge (v2) edge (v3) edge (v4) (v2) edge (v3) edge (v4) (v3) edge (v4);
  \node[int] (v5) at (0, -0.5) {};
  \node[int] (v6) at (0.5, -1) {};
  \node[int] (v7) at (0, -1.5) {};
  \node[int] (v8) at (-0.5, -1) {};
  \draw (v5) edge (v6) edge (v7) edge (v8) (v6) edge (v7) edge (v8) (v7) edge (v8);
  \node[ext, minimum size=0.2cm] (e1) at (2, 2) {$\scriptstyle 1$};
  \node[ext, minimum size=0.2cm] (e2) at (2, 1) {$\scriptstyle 1$};
  \node[ext, minimum size=0.2cm] (e3) at (2, 0) {$\scriptstyle 2$};
  \node[ext, minimum size=0.2cm] (e4) at (2, -1) {$\scriptstyle 2$};
\node[ext, minimum size=0.2cm] (e5) at (2, -2) {$\scriptstyle 3$};
  \draw[->] (v1) edge[dashed] (e2);
  \draw[->] (e3) edge[dashed] (v2);
  \draw[->] (e1)  edge[dashed] (v1);
  \draw[->] (v6)  edge[dashed] (e4);
  \draw[->] (v7)  edge[dashed] (e5);
  \draw[->](v5) edge[dashed] (v4);
  \draw[->](v2)  edge[dashed] (v6);
  \draw[->](v3) edge[dashed] (v6);
  \draw[->](v7)  edge[dashed] (v6);
  \draw[->](v8)  edge[dashed] (v7);
  \draw[->](v1)  edge[dashed] (v4);
\end{tikzpicture}
\]

The natural map $F_{M,N}^\infty\to F_{M,N}^g$ is analogous to the map $G_{M,N}^\infty\to G_{M,N}^g$.
%  \todo{???}
%Let $\Gamma\in F_{M,N}^\infty$ be a graph as above. It has edges of two types (solid or dashed), but no decorations in $V_g$ or $V_g^*$. 
%Then we replace each dashed edge (say $(u,v)$, from vertex $u$ to vertex $v$) by one copy of the $\frac 12$-diagonal element 
%\[
%\Delta^{\frac 12} =
%\sum_{i=1}^g \alpha_i \otimes \beta_i.
%\]
%More precisely, the first factor $\alpha_i$ will be multiiplied into the decoration of vertex $v$ and the second factor $\beta_i$ is multiplied into the decoration at vertex $u$.
%Proceeding in this manner for all dashed edges, we obtain a linear combinations of graphs with no dashed edges but decorations in $V_g\oplus V_g^*$.
%This linear combination is automatically $\GL_g$-invariant and hence defines an element of $F_{M,N}^g$, which is the image of the graph $\Gamma$.

The differential on $F_{M,N}^\infty$ has also two components
\[
	d = d_{contract} + d_{cut}
\]
which are defined identically to the differential on $G_{M,N}^\infty$ defined in section \ref{sec:combinatorial_G} with the difference that $d_{cut}$ does not vanish if it cuts the subgraph consisting of the internal vertices and solid edges in two components.

\subsection{Transformed auxiliary graph complex $\tilde{F}_{M,N}^\infty$}
\label{sec:auxiliary_F}
We define again a new transformed auxiliary graph complex $\tilde{F}_{M,N}^\infty$ in the same way as  the auxiliary graph complex $\tilde{G}_{M,N}^\infty$ in section \ref{sec:auxiliary_G}. Again the differential does not vanish if it cuts the subgraph consisting of the internal vertices and solid edges in two components.

\subsection{Cohomology of the auxiliary graph complex $\tilde{F}_{M,N}^\infty$ and proof of Theorem \ref{thm:main CE all}}

We define $P_{M,N}\subset \tilde{F}_{M,N}^\infty$ as the subspace spanned by graphs without solid edges and "$\ominus$" dashed edges. The following pictures shows an element of $P_{1,1}$ and $P_{3,4}$ respectively:

\[
	\begin{tikzpicture}
		\node[ext, minimum size=0.2cm] (e1) at (-0.75,0) {};
		\node[ext, minimum size=0.2cm] (e2) at (0,0) {};
		\path[->] (e1) edge[dashed] (e2);
  	\end{tikzpicture}
  	\quad\quad\quad\quad
	\begin{tikzpicture}
		\node[int] (v1) at (-.75, .75) {};
  		\node[int] (v2) at (.75, .75) {};
  		\node[int] (v3) at (.75, -.75) {};
  		\node[int] (v4) at (-.75, -.75) {};
  		\draw (v1) edge[dashed] node[midway, above]{$\oplus$} (v2) edge[dashed] node[midway, above]{$\oplus$} (v3) edge[dashed] node[midway, left]{$\oplus$} (v4) (v2) edge[dashed] node[midway, right]{$\oplus$} (v3) edge[dashed] node[midway, below]{$\oplus$} (v4) (v3) edge[dashed] node[midway, below]{$\oplus$} (v4);
  		\node[ext, minimum size=0.2cm] (e1) at (35:1.75) {$\scriptstyle 1$};
  		\node[ext, minimum size=0.2cm] (e2) at (55:1.75) {$\scriptstyle 1$};
		\node[ext, minimum size=0.2cm] (e3) at (125:1.75) {$\scriptstyle 2$};
		\node[ext, minimum size=0.2cm] (e4) at (145:1.75) {$\scriptstyle 3$};
  		\node[ext, minimum size=0.2cm] (e5) at (-35:1.75) {$\scriptstyle 2$};
  		\node[ext, minimum size=0.2cm] (e6) at (-55:1.75) {$\scriptstyle 3$};
  		\node[ext, minimum size=0.2cm] (e7) at (-135:1.75) {$\scriptstyle 4$};
  		\path[->] (v2) edge[dashed] (e1);
  		\path[->] (e2) edge[dashed] (v2);
		\path[->] (v1) edge[dashed] (e3);
  		\path[->] (v1) edge[dashed] (e4);
  		\path[->] (e5) edge[dashed] (v3);
  		\path[->] (e6) edge[dashed] (v3);
  		\path[->] (v4) edge[dashed] (e7);
  	\end{tikzpicture}
\]

\begin{lemma}
\label{lem:quasi-iso}
	The inclusion $P_{M,N} \hookrightarrow \tilde{F}_{M,N}^\infty \cong F_{M,N}^\infty$ is a quasi-isomorphism.  Hence, the cohomology of the auxiliary graph complex is isomorphic to $P_{M,N}$.
\end{lemma}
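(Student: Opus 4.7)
The strategy is to apply Lemma \ref{lemma:inclusion}: since graphs in $P_{M,N}$ have neither solid nor $\ominus$-edges, and the differential on $\tilde{F}_{M,N}^\infty$ acts only on solid edges, $P_{M,N}$ is a sub-dg vector space on which the differential is identically zero. Hence it suffices to show that the quotient complex $Q := \tilde{F}_{M,N}^\infty/P_{M,N}$ is acyclic. This quotient is spanned by graphs having at least one solid edge or at least one $\ominus$-edge between internal vertices.

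To analyze $Q$, I would filter by internal vertex count, letting $\mF^p Q \subset Q$ be spanned by graphs with at most $p$ internal vertices. On each weight piece $\gr^{(W,H)} Q$ this filtration is bounded, since the trivalence condition combined with $W = 2(e-v)+D$ gives uniform bounds on the number of internal vertices and edges. The contraction component $d_{contract}$ of the differential strictly decreases the internal vertex count, so on the associated graded only the ``replace solid by $\ominus$'' component survives.

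The main step is then to check that the associated graded is acyclic. I would decompose it as a direct sum over ``skeleta'', where a skeleton records everything about a graph except the solid/$\ominus$ labeling of each edge between two internal vertices. Because we are working in $F$ rather than $G$, no connectivity constraint on the solid subgraph intervenes, so for a skeleton with $k$ convertible edge positions the corresponding direct summand is exactly $C^{\otimes k}$, where $C = (\Q[1]\to\Q)$ is the two-term acyclic complex from Section \ref{sec:CGamma} (with solid corresponding to $\Q[1]$ and $\ominus$ to $\Q$). Skeleta contributing to $P_{M,N}$ are exactly those with $k=0$, so every skeleton appearing in $Q$ has $k\geq 1$, and $C^{\otimes k}$ is acyclic as a tensor product with an acyclic factor. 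Here no quotient analogous to the $I_{disc}$ of Section \ref{sec:CGamma} appears, precisely because the solid-connectivity requirement has been dropped.

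Finally, Lemma \ref{lemma:graded} promotes the acyclicity of the associated graded to acyclicity of $\gr^{(W,H)} Q$ for every $(W,H)$, hence of $Q$ itself, and Lemma \ref{lemma:inclusion} then yields that $P_{M,N} \hookrightarrow \tilde{F}_{M,N}^\infty \cong F_{M,N}^\infty$ is a quasi-isomorphism. The point that warrants most care is the clean $C^{\otimes k}$ factorization in the third paragraph: it is this absence of a solid-connectivity side condition that makes the argument work for $F$ and that would obstruct a direct translation of the argument to $\tilde{G}_{M,N}^\infty$.
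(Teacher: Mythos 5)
Your proposal is correct and follows essentially the same route as the paper: reduce via Lemma \ref{lemma:inclusion} to acyclicity of the quotient, filter by the number of internal vertices, and conclude with Lemma \ref{lemma:graded}. The only divergence is the last step: where you decompose the associated graded into summands $C^{\otimes k}$ with $k\geq 1$ and invoke acyclicity of the two-term complex $C$, the paper instead writes down the explicit homotopy replacing $\ominus$-edges by solid edges and invokes Lemma \ref{lem:homotopy} --- two phrasings of the same computation --- and your remark that the absence of the solid-connectivity constraint is what permits the clean tensor factorization is precisely the point distinguishing $F^\infty_{M,N}$ from $G^\infty_{M,N}$.
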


\begin{proof}
In a first step we apply Lemma \ref{lemma:inclusion} with $V=\tilde{F}_{M,N}^\infty$ and $U=P_{M,N}$.

Therefore, we are left to show that $\tilde{F}_{M,N}^\infty/P_{M,N}$ is acyclic. Let us consider the bounded filtration of $\tilde{F}_{M,N}^\infty/P_{M,N}$ with respect to the number of internal vertices $v$ and the associated graded complex $gr^v\tilde{F}_{M,N}^\infty/P_{M,N}$. From Lemma \ref{lemma:graded} it can be deduced that if the associate graded complex $gr^v\tilde{F}_{M,N}^\infty/P_{M,N}$ is acyclic then so is $\tilde{F}_{M,N}^\infty/P_{M,N}$. 

Finally, we have to show that the associated graded complex $gr^v\tilde{F}_{M,N}^\infty/P_{M,N}$ is acyclic. The differential $d^v$ of this graded complex reads as follows. We sum over all solid edges $e$ and replace it by a $\ominus$-edge.
\[
	\begin{tikzpicture}
  		\node[int] (v1) at (0,0) {};
		\node[int] (v2) at (1,0) {};
		\draw (v1) edge node[midway, above]{e} (v2);
		\path[->] (-0.75,0) edge[dashed] (v1);
		\path[->] (v1) edge[dashed] (-0.5,0.5);
		\draw (-0.5,-0.5) edge (v1);
		\draw (0,-0.75) edge[dashed] (v1);
		\path[->] (1.5,-0.5) edge[dashed] (v2);
		\path[->] (v2) edge[dashed] (1.75,0);
		\draw (1, 0.75) edge[dashed] (v2);
		\draw (1.5, 0.5) edge (v2);
  	\end{tikzpicture}
  	\quad \mapsto\quad
	\begin{tikzpicture}
  		\node[int] (v1) at (0,0) {};
		\node[int] (v2) at (1,0) {};
		\draw (v1) edge[dashed] node[midway, above]{$\ominus$} (v2);
		\path[->] (-0.75,0) edge[dashed] (v1);
		\path[->] (v1) edge[dashed] (-0.5,0.5);
		\draw (-0.5,-0.5) edge (v1);
		\draw (0,-0.75) edge[dashed] (v1);
		\path[->] (1.5,-0.5) edge[dashed] (v2);
		\path[->] (v2) edge[dashed] (1.75,0);
		\draw (1, 0.75) edge[dashed] (v2);
		\draw (1.5, 0.5) edge (v2);
  	\end{tikzpicture}\]
We can now define a homotopy $h^v$ corresponding to the inverse of this differential.
\[
	\begin{tikzpicture}
  		\node[int] (v1) at (0,0) {};
		\node[int] (v2) at (1,0) {};
		\draw (v1) edge[dashed] node[midway, above]{$\ominus$} (v2);
		\path[->] (-0.75,0) edge[dashed] (v1);
		\path[->] (v1) edge[dashed] (-0.5,0.5);
		\draw (-0.5,-0.5) edge (v1);
		\draw (0,-0.75) edge[dashed] (v1);
		\path[->] (1.5,-0.5) edge[dashed] (v2);
		\path[->] (v2) edge[dashed] (1.75,0);
		\draw (1, 0.75) edge[dashed] (v2);
		\draw (1.5, 0.5) edge (v2);
  	\end{tikzpicture}
  	\quad \mapsto\quad
	\begin{tikzpicture}
  		\node[int] (v1) at (0,0) {};
		\node[int] (v2) at (1,0) {};
		\draw (v1) edge node[midway, above]{e} (v2);
		\path[->] (-0.75,0) edge[dashed] (v1);
		\path[->] (v1) edge[dashed] (-0.5,0.5);
		\draw (-0.5,-0.5) edge (v1);
		\draw (0,-0.75) edge[dashed] (v1);
		\path[->] (1.5,-0.5) edge[dashed] (v2);
		\path[->] (v2) edge[dashed] (1.75,0);
		\draw (1, 0.75) edge[dashed] (v2);
		\draw (1.5, 0.5) edge (v2);
  	\end{tikzpicture}
\]

Finally we apply Lemma \ref{lem:homotopy} with $V=gr^v\tilde{F}_{M,N}^\infty/P_{M,N}$ and $A=d^vh^v+h^vd^v=\mathds{1}$. Therefore the associate graded complex $gr^v\tilde{F}_{M,N}^\infty/P_{M,N}$ is acyclic, which finishes the proof.
\end{proof}

Considering the graphs which are left in the subspace $P_{M,N}$ we can show that the cohomology of the weight $W$ and imbalance $H$ component of the auxiliary graph complex is concentrated in the critical degree.
\begin{lemma}
\label{lem:Finfty vanishing}
	\[
		H^k\left(gr^{(W,H)}F_{M,N}^\infty\right)	=0~,~\text{for}~k\neq k_{crit}^C(W,H).
	\]
\end{lemma}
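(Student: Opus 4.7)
The plan is to combine Lemma~\ref{lem:quasi-iso} with a direct combinatorial degree computation on $P_{M,N}$.

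First I would observe that the differential on $\tilde F_{M,N}^\infty$ only ever contracts a solid edge (and replaces it by a $\ominus$-dashed edge), and therefore vanishes identically on the subspace $P_{M,N}$, which by definition contains no solid edges. Combined with Lemma~\ref{lem:quasi-iso} this identifies $H^k(\gr^{(W,H)} F_{M,N}^\infty)$ with $\gr^{(W,H)} P_{M,N}$ in degree $k$, and the Lemma reduces to the claim that $\gr^{(W,H)} P_{M,N}$ is concentrated in cohomological degree $k_{crit}^C(W,H)$.

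To verify this last claim I would assign to each graph $\Gamma \in P_{M,N}$ its natural combinatorial invariants: the number $v$ of internal vertices, the number $e_d^{ii}$ of $\oplus$-dashed edges between internal vertices, and the numbers $e_d^{ie}$, $e_d^{ei}$, $e_d^{ee}$ of directed dashed edges of internal-to-external, external-to-internal and external-to-external type. Via the translation of each dashed edge into a $\Delta^{1/2}$-insertion (with the $V_g$-half placed at the source and the $V_g^*$-half at the target), the associated graph in $\fG_{(g),1}$ carries $a = e_d^{ii} + e_d^{ie}$ decorations in $\bar H^m(W_{g,1})$ and $b = e_d^{ii} + e_d^{ei}$ decorations in $\bar H^{m+1}(W_{g,1})$, no ordinary edges, weight $W = -2v + 2 e_d^{ii} + e_d^{ie} + e_d^{ei}$ and imbalance $H = e_d^{ie} - e_d^{ei}$ (which is automatically $N-M$, consistently with Lemma/Def.~\ref{lemdef:Finfty}).

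Finally I would compute the cohomological degree. Under the convention that the external tensor factors $V_g$, $V_g^*$ in the definition of $F_{M,N}^g$ are plain $\GL_g$-representations in degree $0$, the degree of $\Gamma$ equals the degree of the associated graph in $\fG_{(g),1}$, namely $k = -nv + n e_d^{ii} + m e_d^{ie} + (m+1) e_d^{ei}$. Rewriting $m e_d^{ie} + (m+1) e_d^{ei} = \tfrac{n}{2}(e_d^{ie} + e_d^{ei}) - \tfrac{H}{2}$ and substituting $e_d^{ie} + e_d^{ei} = W + 2v - 2 e_d^{ii}$ from the weight relation makes the $v$- and $e_d^{ii}$-dependence cancel, leaving $k = \tfrac{nW - H}{2} = k_{crit}^C(W,H)$, as desired.

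The main obstacle is purely one of bookkeeping: one has to keep the external-vertex degree convention straight (the $V_g$ and $V_g^*$ in $F_{M,N}^g$ carry no cohomological shift, in contrast to $\bar H^m$ and $\bar H^{m+1}$ whose decorations on internal vertices do contribute their natural degrees $m$ and $m+1$), and one has to distinguish the four types of dashed edges. Once this is done, the cancellation producing $k_{crit}^C(W,H)$ is automatic and independent of $M$ and $N$, as required by Lemma~\ref{lem:F and CE vanishing}.
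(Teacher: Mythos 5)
Your proposal is correct and follows essentially the same route as the paper: reduce via Lemma \ref{lem:quasi-iso} to showing that $\gr^{(W,H)}P_{M,N}$ sits in a single degree, then compute that degree from the combinatorial data of a solid-edge-free graph. The paper phrases the bookkeeping directly in terms of the decoration counts $a,b$ (solving $W=a+b-2v$, $H=a-b$), whereas you parametrize by the four dashed-edge types and then eliminate; these are the same calculation and both yield $k=\tfrac{(2m+1)W-H}{2}=k_{crit}^C(W,H)$.
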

\begin{proof}
	We will show that $\gr^{(W,H)}P_{M,N}$ is concentrated in degree $k_{crit}^C(W,H)$. Then the lemma follows from Lemma \ref{lem:quasi-iso} before.

	Let $v$ denote the number of internal vertices, $e$ the number of solid edges, $a$ the number of decorations with an element of $H^m(W_{g,1})$, and $b$ the number of decorations with an element of $H^{m+1}(W_{g,1})$ of the related Graph in the original graph complex $\fG_{(g),1}$. Since the remaining graphs in the subspace $P$ do not have any solid edges left, we have $e=0$ for all graphs in the cohomology. Therefore, the degree of graphs in the cohomology is given by
	\[
		k= -(2m+1)v + am +b(m+1),
	\]
	and the weight by
	\[
		W=a+b-2v.
	\]
	We express $a$ and $b$ in terms of the weight $W$, the number of vertices $v$, and the imbalance
	\[
		H=a-b:
	\]
	\[
	\begin{aligned}
		a&=\frac{1}{2}(W+H)+v\\
		b&=\frac{1}{2}(W-H)+v.
	\end{aligned}
	\]
	Therefore, the degree of graphs in the cohomology corresponds to the critical degree
	\[
		k=\frac{W}{2}(2m+1)-\frac{H}{2}= k_{crit}^C(W,H).
	\]
	
\end{proof}	

\subsection{Proof of Theorem \ref{thm:main CE all}}
Fix $W,H,g\geq 3W$. We want to show that 
\[
gr^{(W,H)}H^k\left(C_{CE}\left(\GC_{(g),1}\right)\right) = 0~,~\text{for}~ k \neq k_{crit}^C(W, H).
\]
Since the bigrading is compatible with the differential this corresponds to 
\[
H^k\left(gr^{(W,H)}C_{CE}\left(\GC_{(g),1}\right)\right) = 0,~\text{for}~ k \neq k_{crit}^C(W, H).
\]
By Lemma \ref{lem:F and CE vanishing} it is sufficient to check that
\[
H^k\left(gr^{(W,H)}F_{M,N}^{\infty}\right) = 0.
\]
for $k \neq k_{crit}^C(W, H)$ and for any $M$, $N$ such that $M+N\leq 3W$. The latter follows from Lemma \ref{lem:Finfty vanishing}. Hence, Theorem \ref{thm:main CE all} follows.
 
\subsection{Proof of Theorem \ref{thm:main Koszul}}

Having established the vanishing Theorems \ref{thm:main cohom GC vanishing} and \ref{thm:main CE all} for the Lie algebra and its Chevalley-Eilenberg cohomology, we obtain Theorem \ref{thm:main Koszul} as a direct consequence of Proposition \ref{prop:cohom}, with $W_0=\lfloor \frac g3 \rfloor$.

%	\[
%		H^k\left(\left( gr^{(W,H)}C_{CE}\left(\GC_{(g),1}\right)\otimes V_g^M\otimes \left(V_g^*\right)^N\right)^{GL_g}\right)\right) =0
%	\]
%	for any $M,N$ such that $M+N\leq 3W$. Fix such $M,N$.
%	Taking the cohomology commutes with taking the invariants with respect to $\GL_g$, hence
%\[
%\left(H^k\left(gr^{(W,H)}C_{CE}\left(\GC_{(g),1}\right)\right)\otimes V_g^M\otimes \left(V_g^*\right)^N\right)^{GL_g}\cong H^k\left(\left(gr^{(W,H)}C_{CE}\left(\GC_{(g),1}\right)\otimes V_g^M\otimes \left(V_g^*\right)^N\right)^{GL_g}\right).
%\]
%
%By Lemma/Definition \ref{lemdef:Finfty} and since $2g\geq 3W+M+N=6W$, the right-hand side is equal to 
%\[
%H^k\left(\gr^{(W,H)} F^\infty_{M,N} \right).
%\] 

% Therefore, if $g\geq 3W$ and $N=M+H$, then $H^k\left(gr^{(W,H)}\F_{M,N}^{\infty}\right)=0$ implies 
% \[
% gr^{(W,H)}H^k\left(C_{CE}\left(\GC_{(g),1}\right)\right)=H^k\left(gr^{(W,H)}C_{CE}\left(\GC_{(g),1}\right)\right)=0.
% \]

\subsection{A(nother) presentation of $H_{CE}(\GC_{(g),1})$}

Note that the differential on $C_{CE}\left(\GC_{(g),1}\right)$ is homogeneous of degree -1 with respect to the number of edges in graphs.
Hence the cohomology is also graded by the number of edges. Furthermore, the edge grading is compatible with the commutative product.

Let us denote by $A_{(g),1}\subset H_{CE}(\GC_{(g),1})$ the part of the cohomology of edge degree zero. It forms a graded subalgebra.
Since graphs cannot have negative numbers of edges, $A_{(g),1}$ is the part of $C_{CE}\left(\GC_{(g),1}\right)$ of edge degree zero, modulo the image of the part of edge degree 1 under the differential.
But a graph without edges is a union of single-vertex graphs 
\[
S_U=
\begin{tikzcd}
	\node[int,label={U}] (v) at (0,0) {};
\end{tikzcd},
\]
with $U\in S^{\geq 3}(\bar H(W_{g,1}))$.
The images under the differential of the one-edge graphs are unions of graphs of the form $S_U$ and 
\begin{equation}\label{equ:HCE rel}
S_{U'}S_{U''} +\sum_i \left(  \pm S_{\alpha_i U'}S_{\beta_i U''} -\pm  S_{\beta_i U'}S_{\alpha_i U''} \right)\, .
\end{equation}
That means that the graded commutative algebra $A_{(g),1}$ has a presentation with generators $S_U$, $U\in S^{\geq 3}(\bar H(W_{g,1}))$, and relations \eqref{equ:HCE rel} (plus the relation that $S_U$ is understood to be linear in $U$).
The stabilization result of the previous sections has the following variant for $A_{(g),1}$.

\begin{prop}
The inclusion $\gr^{W}A_{(g),1}\subset \gr^{W} H_{CE}(\GC_{(g),1})$ is an isomorphsism as soon as $g\geq 3W$. 
\end{prop}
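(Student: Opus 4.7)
The plan is to combine the vanishing result of Theorem \ref{thm:main CE all} with a purely arithmetic degree identity that ties the cohomological degree of a graph to its number of edges. The key observation is that for any graph $\Gamma \in \fG_{(g),1}$ with $e$ edges, $v$ vertices, $a$ decorations in $H^m(W_{g,1})$ and $b$ in $H^{m+1}(W_{g,1})$, substituting the relations $a+b = W+2v-2e$ and $a-b = H$ into the cohomological degree formula $k = 2me-(2m+1)v+ma+(m+1)b$ from Section \ref{sec:GCs} causes $v$ to cancel out entirely and yields
\[
k \;=\; k_{crit}^C(W,H) - e.
\]
Since cohomological degree, weight, imbalance and edge count are all additive under disjoint union, the same identity persists on all of $C_{CE}(\GC_{(g),1}) = \fG_{(g),1}\oplus \Q$. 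Thus in fixed weight $W$ and imbalance $H$, the edge-$e$ subspace of $C_{CE}(\GC_{(g),1})$ is concentrated in the single cohomological degree $k_{crit}^C(W,H)-e$, and because $e\geq 0$ the edge-zero subspace is the unique piece sitting in the critical degree $k_{crit}^C(W,H)$.

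The remainder of the argument is then a short diagram chase. Fix $g\geq 3W$ and an imbalance $H$. By Theorem \ref{thm:main CE all}, $\gr^{(W,H)}H_{CE}(\GC_{(g),1})$ lives entirely in cohomological degree $k_{crit}^C(W,H)$. Combined with the degree identity, this forces every cohomology class in $\gr^{(W,H)}H_{CE}(\GC_{(g),1})$ to be represented by a linear combination supported in edge count zero. Such an element is automatically a cocycle, since the differential $d = d_c + d_{cut}$ strictly decreases the edge count and there are no graphs of edge count $-1$. Hence $\gr^{(W,H)}H_{CE}(\GC_{(g),1})$ is equal to the edge-zero part modulo the image under $d$ of the edge-one part, and this is exactly $\gr^{(W,H)}A_{(g),1}$ by the very definition of $A_{(g),1}$. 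Summing over imbalances $H$ gives the stated isomorphism in weight $W$.

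I do not anticipate any serious obstacle. All of the substantive work has already been performed in Theorem \ref{thm:main CE all}; what remains is the formal degree computation above, together with the elementary observation that an edge-zero element can only be a coboundary via the image of an edge-one element under $d$. Injectivity of the inclusion $\gr^W A_{(g),1}\hookrightarrow \gr^W H_{CE}(\GC_{(g),1})$ is immediate from the construction of $A_{(g),1}$ as a subspace of the cohomology.
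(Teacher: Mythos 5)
Your proof is correct, but it takes a genuinely different route from the paper's. The paper proves the statement by running the stabilization machinery a second time: it notes that Lemma \ref{lem:F and CE vanishing} still holds with the cohomological degree $k$ replaced by the edge degree, and then quotes Lemma \ref{lem:quasi-iso} to see that the cohomology of $\gr^W F^{\infty}_{M,N}$ is concentrated in edge degree zero (the subcomplex $P_{M,N}$ contains no solid edges). You instead exploit the arithmetic identity $k=k_{crit}^C(W,H)-e$, which does follow from substituting $a+b=W+2v-2e$ and $a-b=H$ into $k=2me-(2m+1)v+ma+(m+1)b$ (the coefficient of $v$ cancels), and which shows that in fixed weight and imbalance the cohomological grading and the edge grading determine one another; since both $d_c$ and $d_{cut}$ lower the edge count by exactly one, Theorem \ref{thm:main CE all} then immediately forces the cohomology into edge degree zero, and the identification of the critical-degree cohomology with $\gr^{(W,H)}A_{(g),1}$ is precisely the paper's definition of $A_{(g),1}$ as the edge-degree-zero part modulo the image of the edge-degree-one part. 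Your argument is shorter and avoids re-invoking the invariant-theory reduction, at the price of relying on the coincidence that $(W,H,k)$ jointly determine $e$; the paper's version is the one that would survive in a setting where that linear relation fails.
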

\begin{proof}
We have to check that the cohomology $\gr^{W} H_{CE}(\GC_{(g),1})$ vanishes in edge degree $>0$. 
To this end, first note that Lemma \ref{lem:F and CE vanishing} continues to hold if we replace the cohomological degree $k$ therein by the edge degree.
Hence it suffices to show that the cohomology of $\gr^{W} F_{M,N}^\infty$ vanishes in edge degree $>0$. But that is shown in Lemma \ref{lem:quasi-iso}.
\end{proof}

% \subsection{Upper bound for $\GC_{(g),1}$}
% In a final step we transform the result back to $\GC_{(g),1}$, the dual of $\G_{(g),1}$. This leads to the upper bound for the cohomology of $\GC_{(g),1}$       
% \[
% 	H^k\left(gr^{W,H}GC_{(g),1}\right)=0~,~\text{for}~k> k_{crit}^L(W,H)~\text{and}~ g\geq W+2
% \]
% which proofs the uper bound in part (ii) of Theorem \ref{thm:main cohom GC vanishing}.

\section{Chevalley-Eilenberg complex of $\GCex_{(g),1}$}

\subsection{Chevalley-Eilenberg complex}

 We consider the Chevalley-Eilenberg complex 
 $$
 C_{CE}\left(\GCex_{(g),1}\right)
 :=
 \Bar^c\left(\left(\GCex_{(g),1}\right)^c\right)
 = \Bar^c\left(\left(\osp_{g,1}^{nil}\ltimes \GC_{(g),1}\right)^c\right),
 $$
 defined as the cobar construction of the graded dual of the dg Lie algebra $\GCex_{(g),1}=\osp_{g,1}^{nil}\ltimes \GC_{(g),1}$.
 %, or equivalently via the graph complex introduced in section \ref{sec:GCs}.
As a graded vector space it is isomorphic to the symmetric algebra
\[
	S\left(\left(\GCex_{(g),1}\right)^c[-1]\right) \cong S\left(\left(\osp_{g,1}^{nil}\right)^*[-1]\right)\otimes S\left(\G_{(g),1}[-1]\right)\cong S\left(\left(\osp_{g,1}^{nil}\right)^*[-1]\right)\otimes \left(\fG_{(g),1} \oplus \Q \right).
\]
Again, let $V_g\cong H^m(W_{g,1})$ and $V_g^*\cong H^{m+1}(W_{g,1})$ correspond to the defining and adjoint representations of $\GL_g$ respectively. We can interpret $\osp_{g,1}^{nil} \in \Hom\left(V_g^*,V_g\right)\cong V_g\otimes V_g$ and $\left(\osp_{g,1}^{nil}\right)^* \in V_g^*\otimes V_g^*$. Furthermore, every vertex of a graph in $\fG_{(g),1}$ is decorated by elements of $S\left(\bar{H}(W_{g,1})\right)\cong S\left(V_g\oplus V_g^*\right)$. 
As pointed out in the introduction, we extend the $\Z\times\Z$-bigrading to $\GCex_{(g),1}$ by declaring the part $\osp_{g,1}^{nil}$ to be concentrated in weight $W=0$ and imbalance $H=-2$, compatible with the dg Lie structure.
In particular, $\osp_{g,1}^{nil}$ acts on $\GCex_{(g),1}$ respecting the weight grading and 
\[
H\left(\GCex_{(g),1}\right) = \osp_{g,1}^{nil}\ltimes H(\GC_{(g),1}).
\]

Again, we consider the weight $W$ and imbalance $H$ part of the cohomology of the Chevalley-Eilenberg complex $gr^{(W,H)}H\left(C_{CE}\left(\GCex_{(g),1}\right)\right)$. Since the bigrading is compatible with the differential this is isomorphic to \\$H\left(gr^{(W,H)}C_{CE}\left(\GCex_{(g),1}\right)\right)$. 

\subsection{Stabilisation for large $g$}

In order to apply the invariant theory for $\GL_g$ we study the auxiliary graph complex

\[
C^g_{M,N} :=\left(C_{CE}\left(\GCex_{(g),1}\right)\otimes V_g^M\otimes \left(V_g^*\right)^N\right)^{\GL_g}
\]
and its graded version
\[
gr^{(W,H)} C^g_{M,N} =gr^{(W,H)}\left(C_{CE}\left(\GCex_{(g),1}\right)\otimes V_g^M\otimes \left(V_g^*\right)^N\right)^{\GL_g}=\left(gr^{(W,H)}C_{CE}\left(\GCex_{(g),1}\right)\otimes V_g^M\otimes \left(V_g^*\right)^N\right)^{\GL_g}.
\]

We have again a sequence of natural projection maps 
\begin{equation}
\label{equ:Cgtower}
	\cdots \to C^{g+1}_{M,N} \to C^{g}_{M,N}\to C^{g-1}_{M,N} \to \cdots\, .
\end{equation}

\begin{lemdef}
\label{lemdef:Cinfty}
	The sequence above stabilizes in each finite weight.
	That is, for each $M,N,W,H$ and $2g \geq 6W-H+M+N$ the map 
	\[
		gr^{(W,H)}C_{M,N}^{g+1} \to gr^{(W,H)}C_{M,N}^{g}
	\]
	is an isomorphism.
	We define the bigraded dg vector space 
	\[
		C_{M,N}^{\infty} = \bigoplus_{W,H} 	gr^{(W,H)}C_{M,N}^{\infty}
	\]
	such that 
	\[
		gr^{(W,H)}C_{M,N}^{\infty} := gr^{(W,H)}C_{M,N}^{g}
	\]
	for any $g\geq \frac12(6W-H+M+N)$. In other words, $C_{M,N}^{\infty}$ is the weight-wise limit of the tower \eqref{equ:Cgtower}.
\end{lemdef}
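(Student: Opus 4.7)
The plan is to adapt the proof of Lemma/Definition \ref{lemdef:Finfty} to the extended complex. As a graded vector space,
\[
C_{CE}\left(\GCex_{(g),1}\right)\cong S\!\left(\left(\osp_{g,1}^{nil}\right)^*[-1]\right)\otimes C_{CE}\left(\GC_{(g),1}\right),
\]
with $\GL_g$-action compatible with this decomposition. I will decompose this further according to the number $s\geq 0$ of $\left(\osp_{g,1}^{nil}\right)^*$-factors, noting that each such factor has weight $0$ and imbalance $-2$ and sits (either symmetrically or antisymmetrically, depending on the parity of $m$) inside $\left(V_g^*\right)^{\otimes 2}$. Consequently $S^s\!\left(\left(\osp_{g,1}^{nil}\right)^*[-1]\right)$ is a sub-quotient of $\left(V_g^*\right)^{\otimes 2s}$, and the $\GL_g$-equivariance lets us fold these factors into the invariant-theoretic bookkeeping of Lemma/Definition \ref{lemdef:Finfty}.

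Then, applying that bookkeeping summand by summand, each contribution to $\gr^{(W,H)}C^g_{M,N}$ is an invariant of the form $\left(V_g^{\otimes A}\otimes (V_g^*)^{\otimes B}\right)^{\GL_g}$ with
\[
A = a + M, \qquad B = b + 2s + N,
\]
where $a$ and $b$ denote the numbers of $V_g$- and $V_g^*$-decorations in the underlying graph. By Theorem \ref{fft}, this invariant vanishes unless $A=B$ (equivalently $H = N - M$, for the total imbalance is $H = a - b - 2s$), and when $A=B$ the natural stabilisation map is an isomorphism once $g \geq A$.

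The main step is therefore to bound $A$ uniformly in $s$. Trivalence of the internal vertices gives $a + b \leq 3W$ as in the proof of Lemma/Definition \ref{lemdef:Finfty}. Combining the imbalance identity $a - b = H + 2s$ with $|a - b| \leq a + b$ yields $2s \leq 3W - H$. Adding these two bounds,
\[
2A = (a+b) + 2s + M + N \leq 3W + (3W - H) + M + N = 6W - H + M + N,
\]
so the condition $g \geq A$ is implied by $2g \geq 6W - H + M + N$, which is the claimed bound. The $g$-independent limit $\gr^{(W,H)}C_{M,N}^\infty$ is then defined weight-wise as in the previous stabilisation lemmas.

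The only subtle point is that, a priori, there is no bound on $s$ at fixed cohomological degree; extracting the inequality $2s \leq 3W - H$ from the graph-theoretic trivalence constraint is exactly what sidesteps this issue. Once that bound is in hand, the argument is mechanically parallel to those for $G^g_{M,N}$ and $F^g_{M,N}$: split into summands indexed by (core graph, decoration multiplicities $\alpha,\beta$, number $s$), apply Theorem \ref{fft} to each, and pass to the limit.
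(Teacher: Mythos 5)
Your proposal is correct and follows essentially the same route as the paper's own proof: reduce to $\GL_g$-invariants of $V_g^{\otimes A}\otimes (V_g^*)^{\otimes B}$ with $A=a+M$, $B=b+2s+N$, apply Theorem \ref{fft}, and bound $A+B$ via the trivalence estimate $a+b\leq 3W$ together with $2s\leq 3W-H$ extracted from the imbalance identity. The paper's proof is the same argument (with $d$ in place of your $s$), so there is nothing to add.
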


Note that we in particular obtain natural maps 
\[
	C_{M,N}^{\infty} \to C_{M,N}^{g}
\]
for every $g$.

\begin{proof}
The proof is identical to the one of Lemma \ref{lemdef:Finfty} with the difference that we have additional contributions from  $S\left(\left(\osp_{g,1}^{nil}\right)^*[-1]\right)\cong S\left(\left(\left(V_g^*\otimes V_g^*\right)/S_{2}\right)[-1]\right)$.
The proof leads again to $\GL_g$ invariants of expressions of the form $V_g^{\otimes A} \otimes \left(V_g^*\right)^{\otimes B}$ with $A=a+M$ and $B=2d+b+N$ where $a$ denotes the number of decorations in $V_g$ and $b$ the number of decorations in $V_g^*$ of the graph and $2d$ the number of elements of $\osp_{g,1}^{nil}$. In this case the imbalance is defined as $H=a-b-2d$.

The same argument based on fundamental theorems of invariant theory for $\GL_g$ as used in section \ref{sec:stabil_G} leads to a stabilisation of $gr^{(W,H)}C_{M,N}^{g}$ for large enough $g$, i.e. it becomes independent of the genus $g$ and if $A\neq B$ it vanishes.

From the condition $A=B$ we deduce that $H=a-b-2d=N-M$. In order to derive a lower bound for the genus to ensure stabilisation of $gr^{(W,H)}C_{M,N}^{g}$, we use again the fact that the vertices of the considered graphs are at least trivalent (including the decorations), $3v\leq 2e+D$, and that $e\geq 0$ to reach the same upper bound for the number of decorations, $D\leq 3W$, as derived in section \ref{sec:stabil_F}. We know that $H+2d=a-b\leq a+b=D\leq 3W$. Therefore, we have $2d \leq 3W-H$ and $D+2d\leq 6W-H$.

If $2g\geq A+B=a+M+2d+b+N=D+2d+M+N$ the considered complex stabilises. Using the derived upper bound for $D+2d$, this is fulfilled if $2g\geq 6W-H+M+N$. 
\end{proof}

The above proof also shows the following Lemma.

\begin{lemma}\label{lem:HCEW}
The $\GL_g$ representation $\gr^{(W,H)}C_{CE}\left(\GCex_{(g),1}\right)$ is of order $6W-H$. 
\end{lemma}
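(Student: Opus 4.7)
The plan is to recycle the bookkeeping already carried out in the proof of Lemma/Definition \ref{lemdef:Cinfty}: we simply need to track, for a general summand of $\gr^{(W,H)}C_{CE}\!\left(\GCex_{(g),1}\right)$, how many copies of $V_g$ and $V_g^*$ appear, and show that the total is bounded by $6W-H$.

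First I would decompose a basis element of $\gr^{(W,H)}C_{CE}\!\left(\GCex_{(g),1}\right)$, viewed inside
\[
S\!\left(\left(\osp_{g,1}^{nil}\right)^*[-1]\right)\otimes\left(\fG_{(g),1}\oplus\Q\right),
\]
into its constituent pieces: say $a$ vertex decorations in $V_g$, $b$ vertex decorations in $V_g^*$, and $2d$ tensor factors in $V_g^*$ coming from the $d$ symmetric (or antisymmetric) $\osp$-factors in $\left(\osp_{g,1}^{nil}\right)^*\subset V_g^*\otimes V_g^*$. Then such an element lies, as a $\GL_g$-representation, in a subquotient of $V_g^{\otimes a}\otimes (V_g^*)^{\otimes(b+2d)}$, which is of order $a+b+2d=D+2d$, where $D=a+b$ is the total number of vertex decorations.

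Next I would estimate $D+2d$ in terms of $W$ and $H$. As already noted inside the proof of Lemma/Definition \ref{lemdef:Finfty}, the trivalence condition $3v\leq 2e+D$ combined with $e\geq 0$ yields $D\leq 3W$. The definition of the imbalance on $\GCex_{(g),1}$ gives $H=a-b-2d$, so
\[
2d \;=\; a-b-H \;\leq\; a+b-H \;=\; D-H \;\leq\; 3W-H.
\]
Adding the two bounds,
\[
D+2d \;\leq\; 2D-H \;\leq\; 6W-H.
\]

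The final step is to invoke Lemma \ref{lem:rep_gl_g}: since every irreducible $\GL_g$-summand of $\gr^{(W,H)}C_{CE}\!\left(\GCex_{(g),1}\right)$ is a subquotient of some $V_g^{\otimes a}\otimes (V_g^*)^{\otimes(b+2d)}$ with $a+b+2d\leq 6W-H$, the representation is of order at most $6W-H$. No genuine obstacle is expected here; the only point requiring care is to confirm that the $\osp$-factors are correctly counted as $V_g^*\otimes V_g^*$ (not $V_g\otimes V_g$), which is dictated by the convention that $\left(\osp_{g,1}^{nil}\right)^*$, rather than $\osp_{g,1}^{nil}$ itself, appears in the Chevalley--Eilenberg complex; this justifies the sign of the $2d$-contribution to the imbalance.
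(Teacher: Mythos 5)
Your proposal is correct and follows essentially the same route as the paper, which deduces this lemma directly from the bookkeeping in the proof of Lemma/Definition \ref{lemdef:Cinfty}: the trivalence bound $D\leq 3W$, the relation $H=a-b-2d$ with the $d$ factors of $\left(\osp_{g,1}^{nil}\right)^*\subset V_g^*\otimes V_g^*$, and hence $D+2d\leq 6W-H$. One small mislabel: your final conclusion is simply the definition of ``order'' (a direct sum of subquotients of $V_g^{\otimes k}\otimes (V_g^*)^{\otimes l}$ with $k+l\leq 6W-H$), not an application of Lemma \ref{lem:rep_gl_g}, which is only needed afterwards to convert vanishing of invariants into vanishing of the representation.
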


In particular, if $g\geq 6W-H$ then the cohomology
$\gr^{(W,H)}H_{CE}\left(\GCex_{(g),1}\right)$ is completely determined by the requirement that 
\[
\left(\gr^{(W,H)}H_{CE}\left(\GCex_{(g),1}\right)\otimes V_g^M\otimes \left(V_g^*\right)^N\right)^{\GL_g}
=
H(C_{M,N}^\infty)
\]
for all $M,N\leq 6W-H$.

\subsection{Combinatorial form of $C_{M,N}^\infty$}

The auxiliary graph complex $C_{M,N}^\infty$ defined above has a combinatorial description as a graph complex, as can be seen from the proof of Lemma/Definition \ref{lemdef:Cinfty}.

Concretely, It has two different types of edges, solid undirected edges connecting internal vertices as well as dashed directed edges. The latter can start at internal or external vertices and point to internal, external or $\times$-vertices. Internal vertices have total valence at least three, external vertices have valence one with one incoming or one outgoing dashed edge, and $\times$-vertices have valence two with two incoming dashed edges.
\[
\begin{tikzpicture}
  \node[int] (int) at (0,0) {};
  \draw (int) edge (-135: 0.75) (int) edge (-45: 0.75) (int) edge (-90: 0.75);
  \draw[->] (int) edge[dashed] (180: 0.75) (int) edge[dashed] (135: 0.75) (int) edge[dashed] (90: 0.75);
  \path[->] (45: 0.75) edge[dashed] (int);
  \path[->] (0: 0.75) edge[dashed] (int);
  \node[ext, minimum size=0.2cm] (ext1) at (2,0.25) {$\scriptstyle j$};
  \path[->] (ext1) edge[dashed] (1.2,0.25);
  \node[ext, minimum size=0.2cm] (ext2) at (2,-0.25) {$\scriptstyle j$};
  \path[->] (1.2,-0.25) edge[dashed] (ext2);
  \node[cross] (cross) at (-2,0) {}; 
  \path[->] (-1.25,0) edge[dashed] (cross);
  \path[->] (-2.75,0) edge[dashed] (cross);
  \end{tikzpicture}
\]
The following picture shows a graph of the auxiliary graph complex $C^\infty_{2,3}$.:
\[
\begin{tikzpicture}[baseline=-.65ex]
  \node[int] (v1) at (0, 1.5) {};
  \node[int] (v2) at (0.5, 1) {};
  \node[int] (v3) at (0, 0.5) {};
  \node[int] (v4) at (-0.5, 1) {};
  \draw (v1) edge (v2) edge (v3) edge (v4) (v2) edge (v3) edge (v4) (v3) edge (v4);
  \node[int] (v5) at (0, -0.5) {};
  \node[int] (v6) at (0.5, -1) {};
  \node[int] (v7) at (0, -1.5) {};
  \node[int] (v8) at (-0.5, -1) {};
  \draw (v5) edge (v6) edge (v7) edge (v8) (v6) edge (v7) edge (v8) (v7) edge (v8);
  \node[ext, minimum size=0.2cm] (e1) at (2, 2) {$\scriptstyle 1$};
  \node[ext, minimum size=0.2cm] (e2) at (2, 1) {$\scriptstyle 1$};
  \node[ext, minimum size=0.2cm] (e3) at (2, 0) {$\scriptstyle 2$};
  \node[ext, minimum size=0.2cm] (e4) at (2, -1) {$\scriptstyle 2$};
  \node[ext, minimum size=0.2cm] (e5) at (2, -2) {$\scriptstyle 3$};
  \node[cross] (x1) at (-2, 1.5) {};
  \node[cross] (x2) at (-2, 0.5) {};
  \node[cross] (x3) at (-2, -0.5) {};
  \node[cross] (x4) at (-2, -1.5) {};

  \draw[->] (v1) [out=0, in=145] edge[dashed] (e2);
  \draw[->] (e3) edge[dashed] (v2);
  \draw[->] (v7) [out=-45, in=180] edge[dashed] (e5);
  \draw[->] (v4) edge[dashed] (x1) (v4) edge[dashed] (x2) (v5) edge[dashed] (x2) (v8)[out=135, in=45] edge[dashed] (x3) (v8)[out=180, in=-45] edge[dashed] (x3);
  \draw[->](e1) [out=180, in=25] edge[dashed] (x1);
  \draw[->](v8) [out=-135, in=45]edge[dashed] (x4); 
  \draw[->](v7) [out=-135, in=-45] edge[dashed] (x4);
  \draw[->](v5) edge[dashed] (v4);
  \draw[->](v2) [out=-90, in=45] edge[dashed] (v6);
  \draw[->](v3) edge[dashed] (v6);
  \draw[->](v7) [out=0, in=-90] edge[dashed] (v6);
  \draw[->](v8) [out=-90, in=180] edge[dashed] (v7);
  \draw[->](v1) [out=180, in=90] edge[dashed] (v4);
  \draw[->] (v6) [] edge[dashed] (e4);
\end{tikzpicture}
\]

The natural map $C_{M,N}^\infty\to C_{M,N}^g$ has the following combinatorial description. 
Let $\Gamma\in C_{M,N}^\infty$ be a graph as above. It has edges of two types (solid or dashed), but no decorations in $V_g$ or $V_g^*$. 
Then we replace each dashed edge (say $(u,v)$, from vertex $u$ to vertex $v$) by one copy of the $\frac 12$-diagonal element 
\[
\Delta^{\frac 12} = (-1)^m
\sum_{i=1}^g \alpha_i \otimes \beta_i.
\]
More precisely, the first factor $\alpha_i$ will be multiiplied into the decoration of vertex $u$ and the second factor $\beta_i$ is multiplied into the decoration at vertex $v$.
Proceeding in this manner for all dashed edges, we obtain a linear combinations of graphs with no dashed edges but decorations in $V_g\oplus V_g^*$.
This linear combination is automatically $\GL_g$-invariant and hence defines an element of $C_{M,N}^g$, which is the image of the graph $\Gamma$.

The differential on $C_{M,N}^\infty$ has three components
\[
	d = d_{contract} + d_{cut} + d_{act}.
\]
The first two parts of the differential are identical to those on $F_{M,N}^\infty$ defined in section \ref{sec:combinatorial_F} and \ref{sec:combinatorial_G}. The third part of the differential, $d_{act}$, acts on all decorations of internal vertices by elements of $V_g$ with an element of $\osp_{g,1}^{nil}$ mapping it on $V_g^*$
\[
	\osp_{g,1}^{nil}\otimes V_g^* \rightarrow V_g \text{   or equivalently } V_g^* \rightarrow \left(\osp_{g,1}^{nil}\right)^* \otimes V_g,
\]
which corresponds in the graphical representation to summing over all dashed half edges $h$ incoming to an internal vertix and  extending it by a $\times$-vertex and and an outgoing dashed edge. 

Using the graphical notation the total differential of this graph complex is given as follows. For the first two parts, $d_{contract} + d_{cut}$, we sum over all solid undirected edges $e$ and replace it by three terms. We contract the edge $e$ and connect all incident edges of the two affected vertices to the one remaining vertex. Second, we replace the solid undirected edge $e$ by directed dashed edges in ether direction, whereat the latter two terms have opposite sign.
\[
	\begin{tikzpicture}
  		\node[int] (v1) at (0,0) {};
		\node[int] (v2) at (1,0) {};
		\draw (v1) edge node[midway, above]{e} (v2);
		\path[->] (-0.75,0) edge[dashed] (v1);
		\path[->] (v1) edge[dashed] (-0.5,0.5);
		\draw (-0.5,-0.5) edge (v1);
		\draw (0,-0.75) edge (v1);
		\path[->] (1.5,-0.5) edge[dashed] (v2);
		\path[->] (v2) edge[dashed] (1.75,0);
		\draw (1, 0.75) edge (v2);
		\draw (1.5, 0.5) edge (v2);
  	\end{tikzpicture}
  	\quad\mapsto\quad
  	\begin{tikzpicture}
  		\node[int] (v1) at (0,0) {};
		\path[->] (-0.75,0) edge[dashed] (v1);
		\path[->] (v1) edge[dashed] (-0.5,0.5);
		\draw (-0.5,-0.5) edge (v1);
		\draw (0,-0.75) edge (v1);
		\path[->] (0.5,-0.5) edge[dashed] (v1);
		\path[->] (v1) edge[dashed] (0.75,0);
		\draw (0, 0.75) edge (v1);
		\draw (0.5, 0.5) edge (v1);
  	\end{tikzpicture}
  	\quad+\quad
	\begin{tikzpicture}
  		\node[int] (v1) at (0,0) {};
		\node[int] (v2) at (1,0) {};
		\draw[->] (v1) edge[dashed] (v2);
		\path[->] (-0.75,0) edge[dashed] (v1);
		\path[->] (v1) edge[dashed] (-0.5,0.5);
		\draw (-0.5,-0.5) edge (v1);
		\draw (0,-0.75) edge (v1);
		\path[->] (1.5,-0.5) edge[dashed] (v2);
		\path[->] (v2) edge[dashed] (1.75,0);
		\draw (1, 0.75) edge (v2);
		\draw (1.5, 0.5) edge (v2);
  	\end{tikzpicture}
  	\quad-\quad
  	\begin{tikzpicture}
  		\node[int] (v1) at (0,0) {};
		\node[int] (v2) at (1,0) {};
		\draw[->] (v2) edge[dashed] (v1);
		\path[->] (-0.75,0) edge[dashed] (v1);
		\path[->] (v1) edge[dashed] (-0.5,0.5);
		\draw (-0.5,-0.5) edge (v1);
		\draw (0,-0.75) edge (v1);
		\path[->] (1.5,-0.5) edge[dashed] (v2);
		\path[->] (v2) edge[dashed] (1.75,0);
		\draw (1, 0.75) edge (v2);
		\draw (1.5, 0.5) edge (v2);
  	\end{tikzpicture}
\]
For the third part $d_{act}$ of the differential we sum over all half edges h of directed dashed edges pointing to an internal vertex and extend it with an outgoing dashed edge and a $\times$-vertex.
\[
	\begin{tikzpicture}
  		\node[int] (v2) at (0,0) {};
		\path[->] (-1,0) edge[dashed] node[midway, above]{h} (v2);
		\path[->] (0.5,-0.5) edge[dashed] (v2);
		\path[->] (v2) edge[dashed] (0.75,0);
		\draw (0, 0.75) edge (v2);
		\draw (0.5, 0.5) edge (v2);
  	\end{tikzpicture}
  	\quad\mapsto\quad
  		\begin{tikzpicture}
  		\node[int] (v2) at (0,0) {};
  		\node[cross] (x1) at (-1,0){};
		\path[->] (-2,0) edge[dashed] node[midway, above]{h} (x1);
		\path[->] (v2) edge[dashed] (x1);
		\path[->] (0.5,-0.5) edge[dashed] (v2);
		\path[->] (v2) edge[dashed] (0.75,0);
		\draw (0, 0.75) edge (v2);
		\draw (0.5, 0.5) edge (v2);
  	\end{tikzpicture}
\]

\subsection{Transformed auxiliary graph complex $\tilde{C}_{M,N}^\infty$}
We define again a new transformed auxiliary graph complex $\tilde{C}_{M,N}^\infty$ in the same way as  the auxiliary graph complex $\tilde{G}_{M,N}^\infty$ in section \ref{sec:auxiliary_G}. 

%Let us now define a new graph complex $\tilde{C}_{M,N}^\infty$ which is isomorphic to the previous graph complex $C_{M,N}^\infty$ by applying the following coordinate transformation to dashed edges incident to internal vertices only.
%\[
%	\begin{tikzpicture}
%  		\node[int] (v1) at (0,0) {};
%		\node[int] (v2) at (1,0) {};
%		\draw (v1) edge[dashed] node[midway, above]{$\oplus$} (v2);
%  	\end{tikzpicture}
%  	~:=~
%  	\begin{tikzpicture}
%  		\node[int] (v1) at (0,0) {};
%		\node[int] (v2) at (1,0) {};
%		\draw[->] (v1) edge[dashed] (v2);
%  	\end{tikzpicture}
%  	~+~
%  	\begin{tikzpicture}
%  		\node[int] (v1) at (0,0) {};
%		\node[int] (v2) at (1,0) {};
%		\draw[->] (v2) edge[dashed] (v1);
%  	\end{tikzpicture} 	
%\]
%\[
%	\begin{tikzpicture}
%  		\node[int] (v1) at (0,0) {};
%		\node[int] (v2) at (1,0) {};
%		\draw (v1) edge[dashed] node[midway, above]{$\ominus$} (v2);
%  	\end{tikzpicture}
%  	~:=~
%  	\begin{tikzpicture}
%  		\node[int] (v1) at (0,0) {};
%		\node[int] (v2) at (1,0) {};
%		\draw[->] (v1) edge[dashed] (v2);
%  	\end{tikzpicture}
%  	~-~
%  	\begin{tikzpicture}
%  		\node[int] (v1) at (0,0) {};
%		\node[int] (v2) at (1,0) {};
%		\draw[->] (v2) edge[dashed] (v1);
%  	\end{tikzpicture} 	
%\]

In order to define the differential on the new transformed graph complex we sum over all solid edges $e$, all dashed half edges h pointing to an internal vertex as well as all dashed $\oplus$- and $\ominus$-edges. The differential then has the following terms.
\[
\label{diff_aux_complex}
\begin{aligned}
	\begin{tikzpicture}
  		\node[int] (v1) at (0,0) {};
		\node[int] (v2) at (1,0) {};
		\draw (v1) edge node[midway, above]{e} (v2);
		\path[->] (-0.75,0) edge[dashed] (v1);
		\path[->] (v1) edge[dashed] (-0.5,0.5);
		\draw (-0.5,-0.5) edge (v1);
		\draw (0,-0.75) edge (v1);
		\path[->] (1.5,-0.5) edge[dashed] (v2);
		\path[->] (v2) edge[dashed] (1.75,0);
		\draw (1, 0.75) edge (v2);
		\draw (1.5, 0.5) edge (v2);
  	\end{tikzpicture}
  	\quad &\mapsto\quad
  	\begin{tikzpicture}
  		\node[int] (v1) at (0,0) {};
		\path[->] (-0.75,0) edge[dashed] (v1);
		\path[->] (v1) edge[dashed] (-0.5,0.5);
		\draw (-0.5,-0.5) edge (v1);
		\draw (0,-0.75) edge[dashed] (v1);
		\path[->] (0.5,-0.5) edge[dashed] (v1);
		\path[->] (v1) edge[dashed] (0.75,0);
		\draw (0, 0.75) edge[dashed] (v1);
		\draw (0.5, 0.5) edge (v1);
  	\end{tikzpicture}
  	\quad+\quad
	\begin{tikzpicture}
  		\node[int] (v1) at (0,0) {};
		\node[int] (v2) at (1,0) {};
		\draw (v1) edge[dashed] node[midway, above]{$\ominus$} (v2);
		\path[->] (-0.75,0) edge[dashed] (v1);
		\path[->] (v1) edge[dashed] (-0.5,0.5);
		\draw (-0.5,-0.5) edge (v1);
		\draw (0,-0.75) edge[dashed] (v1);
		\path[->] (1.5,-0.5) edge[dashed] (v2);
		\path[->] (v2) edge[dashed] (1.75,0);
		\draw (1, 0.75) edge[dashed] (v2);
		\draw (1.5, 0.5) edge (v2);
  	\end{tikzpicture}\\
	\begin{tikzpicture}
  		\node[int] (v2) at (0,0) {};
  		\node[ext, minimum size=0.2cm] (e1) at (-1, 0) {};
		\path[->] (e1) edge[dashed] node[midway, above]{h} (v2);
		\path[->] (0.5,-0.5) edge[dashed] (v2);
		\path[->] (v2) edge[dashed] (0.75,0);
		\draw (0, 0.75) edge[dashed] (v2);
		\draw (0.5, 0.5) edge (v2);
  	\end{tikzpicture}
  	\quad &\mapsto\quad
  	\begin{tikzpicture}
  		\node[int] (v2) at (0,0) {};
  		\node[cross] (x1) at (-1,0) {};
  		\node[ext, minimum size=0.2cm] (e1) at (-2, 0) {};
		\path[->] (e1) edge[dashed] node[midway, above]{h} (x1);
		\path[->] (v2) edge[dashed] (x1);
		\path[->] (0.5,-0.5) edge[dashed] (v2);
		\path[->] (v2) edge[dashed] (0.75,0);
		\draw (0, 0.75) edge[dashed] (v2);
		\draw (0.5, 0.5) edge (v2);
  	\end{tikzpicture}\\
	\begin{tikzpicture}
  		\node[int] (v1) at (0,0) {};
		\node[int] (v2) at (1,0) {};
		\draw (v1) edge[dashed] node[midway, above]{$\oplus$} (v2);
		\path[->] (-0.75,0) edge[dashed] (v1);
		\path[->] (v1) edge[dashed] (-0.5,0.5);
		\draw (-0.5,-0.5) edge (v1);
		\draw (0,-0.75) edge[dashed] (v1);
		\path[->] (1.5,-0.5) edge[dashed] (v2);
		\path[->] (v2) edge[dashed] (1.75,0);
		\draw (1, 0.75) edge[dashed] (v2);
		\draw (1.5, 0.5) edge (v2);
  	\end{tikzpicture}
  	\quad &\mapsto\quad2\quad
  	\begin{tikzpicture}
  		\node[int] (v1) at (-1,0) {};
		\node[int] (v2) at (1,0) {};
		\node[cross] (x1) at (0,0) {};
		\draw[->] (v1) edge[dashed] (x1) (v2) edge[dashed] (x1);
		\path[->] (-1.75,0) edge[dashed] (v1);
		\path[->] (v1) edge[dashed] (-1.5,0.5);
		\draw (-1.5,-0.5) edge (v1);
		\draw (-1,-0.75) edge[dashed] (v1);
		\path[->] (1.5,-0.5) edge[dashed] (v2);
		\path[->] (v2) edge[dashed] (1.75,0);
		\draw (1, 0.75) edge[dashed] (v2);
		\draw (1.5, 0.5) edge (v2);
  	\end{tikzpicture}\\
	\begin{tikzpicture}
  		\node[int] (v1) at (0,0) {};
		\node[int] (v2) at (1,0) {};
		\draw (v1) edge[dashed] node[midway, above]{$\ominus$} (v2);
		\path[->] (-0.75,0) edge[dashed] (v1);
		\path[->] (v1) edge[dashed] (-0.5,0.5);
		\draw (-0.5,-0.5) edge (v1);
		\draw (0,-0.75) edge[dashed] (v1);
		\path[->] (1.5,-0.5) edge[dashed] (v2);
		\path[->] (v2) edge[dashed] (1.75,0);
		\draw (1, 0.75) edge[dashed] (v2);
		\draw (1.5, 0.5) edge (v2);
  	\end{tikzpicture}
  	\quad &\mapsto\quad 0
\end{aligned}
\]

\subsection{Cohomology of the auxiliary graph complex $\tilde{C}_{M,N}^\infty$}

Define the the subspace $R_{M,N}\subset \tilde C_{M,N}^\infty$ spanned by graphs that have no edges connecting two non-external vertices, and all edges from internal to external vertices are outgoing.
In other words, permissible graphs are unions of the following types of connected components:

% Let us consider the vector space $R$ generated by graphs consisting of one internal vertex decorated by at least three hairs consisting of an outgoing dashed edge pointing towards an external vertex, plus one additional graph consisting of two external vertices connected by a dashed edge and one graph consisting of two external vertices and a $\times$-vertex with two dashed edges pointing from the external vertices towards the $\times$-vertex. 

\[
	% R:=\left\langle
	\begin{tikzpicture}
		\node[ext, minimum size=0.2cm] (e1) at (-0.75,0) {};
		\node[ext, minimum size=0.2cm] (e2) at (0,0) {};
		\path[->] (e1) edge[dashed] (e2);
  	\end{tikzpicture}
  	,\quad
  	\begin{tikzpicture}
		\node[ext, minimum size=0.2cm] (e1) at (-0.75,0) {};
		\node[ext, minimum size=0.2cm] (e2) at (0.75,0) {};
		\node[cross] (x1) at (0,0) {};
		\path[->] (e1) edge[dashed] (x1);
		\path[->] (e2) edge[dashed] (x1);
  	\end{tikzpicture}
  	,\quad
	\begin{tikzpicture}
		\node[int] (v1) at (0,0) {};
		\node[ext, minimum size=0.2cm] (e1) at (0:0.75) {};
		\node[ext, minimum size=0.2cm] (e2) at (120:0.75) {};
		\node[ext, minimum size=0.2cm] (e3) at (-120:0.75) {};
		\path[->] (v1) edge[dashed] (e1) (v1) edge[dashed] (e2) (v1) edge[dashed] (e3) ;
  	\end{tikzpicture}
  	,\quad
	\begin{tikzpicture}
		\node[int] (v1) at (0,0) {};
		\node[ext, minimum size=0.2cm] (e1) at (0:0.75) {};
		\node[ext, minimum size=0.2cm] (e2) at (90:0.75) {};
		\node[ext, minimum size=0.2cm] (e3) at (180:0.75) {};
		\node[ext, minimum size=0.2cm] (e4) at (-90:0.75) {};
		\path[->] (v1) edge[dashed] (e1) (v1) edge[dashed] (e2) (v1) edge[dashed] (e3) (v1) edge[dashed] (e4) ;
  	\end{tikzpicture}
  	,\quad
  	\begin{tikzpicture}
		\node[int] (v1) at (0,0) {};
		\node[ext, minimum size=0.2cm] (e1) at (0:0.75) {};
		\node[ext, minimum size=0.2cm] (e2) at (72:0.75) {};
		\node[ext, minimum size=0.2cm] (e3) at (144:0.75) {};
		\node[ext, minimum size=0.2cm] (e4) at (216:0.75) {};
		\node[ext, minimum size=0.2cm] (e5) at (288:0.75) {};
		\path[->] (v1) edge[dashed] (e1) (v1) edge[dashed] (e2) (v1) edge[dashed] (e3) (v1) edge[dashed] (e4) (v1) edge[dashed] (e5);
  	\end{tikzpicture}
  	,\quad \cdots
	% \right\rangle
\]
% Furthermore, we consider the symmetric algebra $S(R)$ consisting of products of graphs from $R$. Finally we restrict $S(R)$ to the graphs with $M$ dashed half-edges starting at an external vertex and $N$ dashed half-edges pointing towards an external vertex:
% \[
% R_{M,N} := \left(S\left(R\right)
%   	|M\times\left(
% 	\begin{tikzpicture}
% 		\node[ext, minimum size=0.2cm] (e2) at (0,0) {};
% 		\path[->] (e2) edge[dashed] (-0.75,0);
%   	\end{tikzpicture}
%   	\right),
%   	N\times\left(
%   	\begin{tikzpicture}
% 		\node[ext, minimum size=0.2cm] (e2) at (0,0) {};
% 		\path[->] (-0.75,0) edge[dashed] (e2);
%   	\end{tikzpicture}
%   	\right)\right)
% \]
% \todo{korrekt? numerierung der externen vertices?}
with the numbering at external vertices omitted. It is clear that all graphs as above are cocycles, and hence $R_{M,N}$ is a subcomplex, equipped with zero differential.

\begin{prop}\label{prop:chevgcex}
	The inclusion $R_{M,N} \hookrightarrow \tilde{C}_{M,N}^\infty \cong C_{M,N}^\infty$ is a quasi-isomorphism.  Hence, the cohomology of the auxiliary graph complex is isomorphic to $R_{M,N}$.

\end{prop}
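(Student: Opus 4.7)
The proof extends the strategy of Lemma \ref{lem:quasi-iso}. By Lemma \ref{lemma:inclusion} it suffices to show that the quotient complex $\tilde C_{M,N}^\infty / R_{M,N}$ is acyclic. We factor this argument through an intermediate subcomplex $Q_{M,N} \subset \tilde C_{M,N}^\infty$ spanned by graphs without solid edges and without $\ominus$-edges; one checks that $Q_{M,N}$ is a subcomplex (with residual differential $d_{act}$ plus the $\oplus$-split) containing $R_{M,N}$. We then prove separately that both inclusions $Q_{M,N} \hookrightarrow \tilde C_{M,N}^\infty$ and $R_{M,N} \hookrightarrow Q_{M,N}$ are quasi-isomorphisms.

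For the outer inclusion, we equip $\tilde C_{M,N}^\infty / Q_{M,N}$ with the bounded filtration first by the number of internal vertices (ascending) and then by the number of $\times$-vertices (descending). The edge-contraction part of the differential lowers the internal vertex count, whereas both $d_{act}$ and the $\oplus$-split raise the $\times$-vertex count, so on the doubly-associated graded only the $\ominus$-replacement of solid edges survives. For each underlying \emph{frame} of a graph (fixing everything except the solid-or-$\ominus$ labels of the non-$\oplus$ internal-internal edges), the resulting complex splits as a tensor product of the acyclic two-term complex $C = (\Q[1] \to \Q)$ from Section \ref{sec:CGamma}, which is acyclic precisely when at least one such edge is present---the defining condition of the quotient. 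Two applications of Lemma \ref{lemma:graded} together with the explicit homotopy that exchanges $\ominus$-edges for solid edges (as in Lemma \ref{lem:quasi-iso}) then yield the claim.

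For the inner inclusion, we introduce a homotopy $h$ on $Q_{M,N}$: for each $\times$-vertex $w$ of a graph $\Gamma$ admitting at least one internal neighbor, and each such internal neighbor $v$ of $w$, contribute the graph obtained by deleting $w$ and the edge $v \to w$ and reconnecting the remaining dashed half-edge at $w$ to $v$, with suitable Koszul signs. When $w$ has two internal neighbors $A,B$, the two resulting contributions combine to the $\oplus$-edge between $A$ and $B$, so $h$ is well-defined as a map to $Q_{M,N}$. A direct combinatorial computation then shows $[d, h]\,\Gamma = c(\Gamma)\cdot \Gamma$, where
\[
c(\Gamma) = 2\,n_\oplus(\Gamma) + n_{\mathrm{ext}\to\mathrm{int}}(\Gamma) + 2\,n_{\times,2}(\Gamma) + n_{\times,1}(\Gamma)
\]
counts $\oplus$-edges between internals, external-to-internal dashed edges, and $\times$-vertices with respectively two and one internal neighbors. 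Since $c$ is preserved by both $d$ and $h$ and is strictly positive exactly on $Q_{M,N}/R_{M,N}$, decomposing into $c$-eigenspaces and applying Lemma \ref{lem:homotopy} (after rescaling $h$ on each) yields acyclicity.

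The principal technical obstacle lies in the sign and coefficient bookkeeping of the second stage: verifying $[d, h] = c \cdot \mathrm{id}$ requires careful tracking of Koszul signs when contracting $\times$-vertices, and one must handle degenerate configurations---most notably tadpoles ($\times$-vertices whose two incident edges emanate from the same internal vertex, which vanish by antisymmetry) and multi-edges. Once this is verified, combining the two stages completes the proof of the proposition.
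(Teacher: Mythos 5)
Your proof is correct and follows essentially the same strategy as the paper's: reduce via Lemma \ref{lemma:inclusion} to acyclicity of the quotient, filter by the number of internal vertices to kill the contraction term, and then contract the residual differential (solid $\to$ $\ominus$, the $\times$-extension of incoming half-edges, and the $\oplus$-split) with the evident explicit homotopy. The only differences are organizational: the paper does this in a single stage with one combined homotopy $h^v$ on $\gr^v\tilde{C}_{M,N}^\infty/R_{M,N}$, whereas you factor through the intermediate subcomplex $Q_{M,N}$; your explicit identification of $[d,h]=c\cdot\mathrm{id}$ with $c>0$ exactly off $R_{M,N}$, followed by rescaling on eigenspaces, is a more careful justification of the step the paper compresses into ``$d^vh^v+h^vd^v=\mathds{1}$''.
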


\begin{proof}
 
In a first step we apply Lemma \ref{lemma:inclusion} with $V=\tilde{C}_{M,N}^\infty$ and $U=R_{M,N}$.
Therefore, we are left to show that $\tilde{C}_{M,N}^\infty/R_{M,N}$ is acyclic. Let us consider the bounded filtration of $\tilde{C}_{M,N}^\infty/R_{M,N}$ with respect to the number of internal vertices $v$ and consider the associated graded complex $gr^v\tilde{C}_{M,N}^\infty/R_{M,N}$. From Lemma \ref{lemma:graded} it can be deduced that if the associate graded complex $gr^v\tilde{C}_{M,N}^\infty/R_{M,N}$ is acyclic then so is $\tilde{C}_{M,N}^\infty/R_{M,N}$. 

Finally, we have to show that the associated graded complex $gr^v\tilde{C}_{M,N}^\infty/R_{M,N}$ is acyclic. The differential $d^v$ of this graded complex reads as follows. We sum again over all solid edges $e$, all half edges $h$ pointing towards an inner vertex as well as over all $\oplus$-edges.
\[
\begin{aligned}
	\begin{tikzpicture}
  		\node[int] (v1) at (0,0) {};
		\node[int] (v2) at (1,0) {};
		\draw (v1) edge node[midway, above]{e} (v2);
		\path[->] (-0.75,0) edge[dashed] (v1);
		\path[->] (v1) edge[dashed] (-0.5,0.5);
		\draw (-0.5,-0.5) edge (v1);
		\draw (0,-0.75) edge[dashed] (v1);
		\path[->] (1.5,-0.5) edge[dashed] (v2);
		\path[->] (v2) edge[dashed] (1.75,0);
		\draw (1, 0.75) edge[dashed] (v2);
		\draw (1.5, 0.5) edge (v2);
  	\end{tikzpicture}
  	\quad &\mapsto\quad
	\begin{tikzpicture}
  		\node[int] (v1) at (0,0) {};
		\node[int] (v2) at (1,0) {};
		\draw (v1) edge[dashed] node[midway, above]{$\ominus$} (v2);
		\path[->] (-0.75,0) edge[dashed] (v1);
		\path[->] (v1) edge[dashed] (-0.5,0.5);
		\draw (-0.5,-0.5) edge (v1);
		\draw (0,-0.75) edge[dashed] (v1);
		\path[->] (1.5,-0.5) edge[dashed] (v2);
		\path[->] (v2) edge[dashed] (1.75,0);
		\draw (1, 0.75) edge[dashed] (v2);
		\draw (1.5, 0.5) edge (v2);
  	\end{tikzpicture}\\
		\begin{tikzpicture}
  		\node[int] (v2) at (0,0) {};
  		\node[ext, minimum size=0.2cm] (e1) at (-1, 0) {};
		\path[->] (e1) edge[dashed] node[midway, above]{h} (v2);
		\path[->] (0.5,-0.5) edge[dashed] (v2);
		\path[->] (v2) edge[dashed] (0.75,0);
		\draw (0, 0.75) edge[dashed] (v2);
		\draw (0.5, 0.5) edge (v2);
  	\end{tikzpicture}
  	\quad &\mapsto\quad
  	\begin{tikzpicture}
  		\node[int] (v2) at (0,0) {};
  		\node[cross] (x1) at (-1,0) {};
  		\node[ext, minimum size=0.2cm] (e1) at (-2, 0) {};
		\path[->] (e1) edge[dashed] node[midway, above]{h} (x1);
		\path[->] (v2) edge[dashed] (x1);
		\path[->] (0.5,-0.5) edge[dashed] (v2);
		\path[->] (v2) edge[dashed] (0.75,0);
		\draw (0, 0.75) edge[dashed] (v2);
		\draw (0.5, 0.5) edge (v2);
  	\end{tikzpicture}\\
  	\begin{tikzpicture}
  		\node[int] (v1) at (0,0) {};
		\node[int] (v2) at (1,0) {};
		\draw (v1) edge[dashed] node[midway, above]{$\oplus$} (v2);
		\path[->] (-0.75,0) edge[dashed] (v1);
		\path[->] (v1) edge[dashed] (-0.5,0.5);
		\draw (-0.5,-0.5) edge (v1);
		\draw (0,-0.75) edge[dashed] (v1);
		\path[->] (1.5,-0.5) edge[dashed] (v2);
		\path[->] (v2) edge[dashed] (1.75,0);
		\draw (1, 0.75) edge[dashed] (v2);
		\draw (1.5, 0.5) edge (v2);
  	\end{tikzpicture}
  	\quad &\mapsto\quad2\quad
  	\begin{tikzpicture}
  		\node[int] (v1) at (-1,0) {};
		\node[int] (v2) at (1,0) {};
		\node[cross] (x1) at (0,0) {};
		\draw[->] (v1) edge[dashed] (x1) (v2) edge[dashed] (x1);
		\path[->] (-1.75,0) edge[dashed] (v1);
		\path[->] (v1) edge[dashed] (-1.5,0.5);
		\draw (-1.5,-0.5) edge (v1);
		\draw (-1,-0.75) edge[dashed] (v1);
		\path[->] (1.5,-0.5) edge[dashed] (v2);
		\path[->] (v2) edge[dashed] (1.75,0);
		\draw (1, 0.75) edge[dashed] (v2);
		\draw (1.5, 0.5) edge (v2);
  	\end{tikzpicture}\\
\end{aligned}
\]
We can now define a homotopy $h^v$ corresponding to the inverse of this differential.
\[
\begin{aligned}
	\begin{tikzpicture}
  		\node[int] (v1) at (0,0) {};
		\node[int] (v2) at (1,0) {};
		\draw (v1) edge[dashed] node[midway, above]{$\ominus$} (v2);
		\path[->] (-0.75,0) edge[dashed] (v1);
		\path[->] (v1) edge[dashed] (-0.5,0.5);
		\draw (-0.5,-0.5) edge (v1);
		\draw (0,-0.75) edge[dashed] (v1);
		\path[->] (1.5,-0.5) edge[dashed] (v2);
		\path[->] (v2) edge[dashed] (1.75,0);
		\draw (1, 0.75) edge[dashed] (v2);
		\draw (1.5, 0.5) edge (v2);
  	\end{tikzpicture}
  	\quad &\mapsto\quad
	\begin{tikzpicture}
  		\node[int] (v1) at (0,0) {};
		\node[int] (v2) at (1,0) {};
		\draw (v1) edge node[midway, above]{e} (v2);
		\path[->] (-0.75,0) edge[dashed] (v1);
		\path[->] (v1) edge[dashed] (-0.5,0.5);
		\draw (-0.5,-0.5) edge (v1);
		\draw (0,-0.75) edge[dashed] (v1);
		\path[->] (1.5,-0.5) edge[dashed] (v2);
		\path[->] (v2) edge[dashed] (1.75,0);
		\draw (1, 0.75) edge[dashed] (v2);
		\draw (1.5, 0.5) edge (v2);
  	\end{tikzpicture}\\
  	\begin{tikzpicture}
  		\node[int] (v2) at (0,0) {};
  		\node[cross] (x1) at (-1,0) {};
  		\node[ext, minimum size=0.2cm] (e1) at (-2, 0) {};
		\path[->] (e1) edge[dashed] node[midway, above]{h} (x1);
		\path[->] (v2) edge[dashed] (x1);
		\path[->] (0.5,-0.5) edge[dashed] (v2);
		\path[->] (v2) edge[dashed] (0.75,0);
		\draw (0, 0.75) edge[dashed] (v2);
		\draw (0.5, 0.5) edge (v2);
  	\end{tikzpicture}
  	\quad &\mapsto\quad
  	\begin{tikzpicture}
  		\node[int] (v2) at (0,0) {};
  		\node[ext, minimum size=0.2cm] (e1) at (-1, 0) {};
		\path[->] (e1) edge[dashed] node[midway, above]{h} (v2);
		\path[->] (0.5,-0.5) edge[dashed] (v2);
		\path[->] (v2) edge[dashed] (0.75,0);
		\draw (0, 0.75) edge[dashed] (v2);
		\draw (0.5, 0.5) edge (v2);
  	\end{tikzpicture}\\
  	\begin{tikzpicture}
  		\node[int] (v1) at (-1,0) {};
		\node[int] (v2) at (1,0) {};
		\node[cross] (x1) at (0,0) {};
		\draw[->] (v1) edge[dashed] (x1) (v2) edge[dashed] (x1);
		\path[->] (-1.75,0) edge[dashed] (v1);
		\path[->] (v1) edge[dashed] (-1.5,0.5);
		\draw (-1.5,-0.5) edge (v1);
		\draw (-1,-0.75) edge[dashed] (v1);
		\path[->] (1.5,-0.5) edge[dashed] (v2);
		\path[->] (v2) edge[dashed] (1.75,0);
		\draw (1, 0.75) edge[dashed] (v2);
		\draw (1.5, 0.5) edge (v2);
  	\end{tikzpicture}
  	\quad &\mapsto\quad\frac{1}{2}\quad
  	\begin{tikzpicture}
  		\node[int] (v1) at (0,0) {};
		\node[int] (v2) at (1,0) {};
		\draw (v1) edge[dashed] node[midway, above]{$\oplus$} (v2);
		\path[->] (-0.75,0) edge[dashed] (v1);
		\path[->] (v1) edge[dashed] (-0.5,0.5);
		\draw (-0.5,-0.5) edge (v1);
		\draw (0,-0.75) edge[dashed] (v1);
		\path[->] (1.5,-0.5) edge[dashed] (v2);
		\path[->] (v2) edge[dashed] (1.75,0);
		\draw (1, 0.75) edge[dashed] (v2);
		\draw (1.5, 0.5) edge (v2);
  	\end{tikzpicture}\\
\end{aligned}
\]

Finally, we apply Lemma \ref{lem:homotopy} with $V=gr^v\tilde{C}_{M,N}^\infty/R_{M,N}$ and $A=d^vh^v+h^vd^v=\mathds{1}$.
Hence, the associate graded complex $gr^v\tilde{C}_{M,N}^\infty/R_{M,N}$ is acyclic, which finishes the proof.
\end{proof}

% \subsection{Cohomology of the Chevalley-Eilenberg complex of $\GCex_{(g),1}$}

By Lemma \ref{lem:HCEW} the Chevalley-Eilenberg cohomology $\gr^{(W,H)}H_{CE}\left(\GCex_{(g),1}\right)$ may be determined from the previous Proposition as long as $g\geq 6W-H$.
However, explicit formulas are not easy to obtain.
We shall hence restrict to extracting a set of generators.
% \todo[inline]{Modify text....}
% We can transform the space $S(R)$ back and express the cohomology of the initial Chevalley-Eilenberg complex $C_{CE}\left(\GCex_{(g),1}\right)$. 
To this end consider the graphs with a single internal vertex decorated by an element of $S^{\geq3}(H^m(W_{g,1})[-m])$
\[
	T:=\left\langle
	\begin{tikzpicture}
  		\node[int] at (0,0) {};
  		\node at (0,0.3) {$U$};
  	\end{tikzpicture}
  	\right\rangle
  	\quad\text{with}\quad
  	U\in S^{\geq3}(H^m(W_{g,1})[-m]).
\]

%  Furthermore, The graph with an $\times$-vertex and two incoming dashed edges originating from external vertices corresponds to an element of $\left(\osp_{g,1}^{nil}\right)^*$.

From Proposition \ref{prop:chevgcex} we then obtain in particular the following Corollary.
\begin{cor}
\label{cor:chevgcex}	
The map 
$\gr^{(W,H)}\left(S\left(\left(\osp_{g,1}^{nil}\right)^*[-1]\right)\otimes S\left(T[-1]\right)\right) \to \gr^{(W,H)} H\left(C_{CE}\left(\GCex_{(g),1}\right)\right)$
is surjective as long as $g\geq 6W-H$.
\end{cor}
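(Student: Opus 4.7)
The plan is to reduce the claim to the combinatorial description of the cohomology already provided by Proposition \ref{prop:chevgcex}, via the invariant theory machinery of the paper. First, I would invoke Lemma \ref{lem:HCEW} to note that the $\GL_g$-representation $\gr^{(W,H)} H_{CE}(\GCex_{(g),1})$ is of order at most $6W-H$, so that Lemma \ref{lem:rep_gl_g} reduces surjectivity to surjectivity after tensoring with $V_g^{\otimes M}\otimes (V_g^*)^{\otimes N}$ and taking $\GL_g$-invariants, for all pairs $M,N$ with $M+N\leq 6W-H$. The hypothesis $g\geq 6W-H$ then implies the stabilization bound $2g\geq 6W-H+M+N$ of Lemma/Definition \ref{lemdef:Cinfty}, so the target becomes $\gr^{(W,H)}H(C^\infty_{M,N})$, which by Proposition \ref{prop:chevgcex} is canonically isomorphic to $\gr^{(W,H)} R_{M,N}$.

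Next I would match a basis of $R_{M,N}$ with explicit elements in the image of the map. Recall that $R_{M,N}$ is spanned by disjoint unions of three types of connected components: a single dashed edge between two external vertices; a $\times$-vertex with two incoming dashed edges coming from external vertices; and a single internal vertex with only outgoing dashed edges going to external vertices. Under the natural combinatorial map $\tilde C^\infty_{M,N}\to C^g_{M,N}$ that expands each dashed edge via the half-diagonal $\Delta^{\frac 12}=(-1)^m\sum_i \alpha_i\otimes \beta_i$, these three types correspond respectively to the empty element $1$; to a single generator of $(\osp_{g,1}^{nil})^*[-1]$ contracted with two external $V_g$-legs; and to a single generator of $T[-1]$, namely a single-vertex graph decorated by an element of $S^{\geq 3}(V_g)$, contracted with the appropriate number of external $V_g^*$-legs.

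Finally, a disjoint union of such components corresponds precisely to a symmetric product of generators in $S((\osp_{g,1}^{nil})^*[-1])\otimes S(T[-1])$, contracted with a suitable $\GL_g$-invariant tensor in $V_g^{\otimes M}\otimes (V_g^*)^{\otimes N}$ produced by the half-diagonals on the external legs. Thus every basis element of $R_{M,N}$ lies in the image of the invariants of our map, which gives the desired surjectivity. The main point requiring care is the bookkeeping of the dictionary between the three types of connected components of $R_{M,N}$ and the generators of the source algebra together with their external contractions; once this correspondence is unpacked, no further computation is needed beyond what Proposition \ref{prop:chevgcex} and Lemma \ref{lem:HCEW} already provide.
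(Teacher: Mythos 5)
Your proposal is correct and follows essentially the same route as the paper: the paper derives the corollary by combining Lemma \ref{lem:HCEW} (so that the cohomology is detected by the invariant spaces $H(C^\infty_{M,N})$ for $M+N\leq 6W-H$ once $g\geq 6W-H$) with the identification $H(C^\infty_{M,N})\cong R_{M,N}$ from Proposition \ref{prop:chevgcex}, and then reads off that the connected components spanning $R_{M,N}$ are exactly the images of $1$, of generators of $\left(\osp_{g,1}^{nil}\right)^*[-1]$, and of generators of $T[-1]$ under the half-diagonal expansion. Your write-up simply makes explicit the reduction via Lemma \ref{lem:rep_gl_g} and the dictionary between components and generators, which is what the paper leaves implicit.
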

% \todo[inline]{Thomas: Proof? Somewhere we need to use the Lemmas above or not? Also, there is no weight mentioned and no bound on $g$... that looks wrong.}


\begin{thebibliography}{1}

\bibitem{FNW}
Felder Matteo, Naef Florian, Willwacher Thomas.
\newblock Stable cohomology of graph complexes.
\newblock preprint arXiv:2106.12826, 2021.

%\bibitem{AKKN0}
%Anton Alekseev, Nariya Kawazumi, Yusuke Kuno, Florian Naef.
%\newblock Higher genus Kashiwara-Vergne problems and the Goldman-Turaev Lie bialgebra.
%\newblock C. R. Math. Acad. Sci. Paris 355 (2017), no. 2, 123–127. 
%
%\bibitem{AKKN} 
%Anton Alekseev, Nariya Kawazumi, Yusuke Kuno, Florian Naef.
%\newblock The Goldman-Turaev Lie bialgebra and the Kashiwara-Vergne problem in higher genera.
%\newblock Preprint arXiv:1804.09566, 2018.
%
%\bibitem{AlekseevTorossian}
%Anton Alekseev, Charles Torossian.
%\newblock The Kashiwara-Vergne conjecture and Drinfeld's associators.
%\newblock Ann. of Math. (2) 175 (2012), no. 2, 415–463.

% \bibitem{AssarMarko}
% Assar Andersson, Marko \v Zivkovi\'c.
% \newblock Hairy graphs to ribbon graphs via a fixed source graph complex.
% \newblock preprint arXiv:1912.09438, 2019.


% \bibitem{AT}
%  G.~{Arone} and V.~{Turchin}.
%  \newblock {Graph-complexes computing the rational homotopy of high dimensional
%    analogues of spaces of long knots}.
%  \newblock {\em Ann. Inst. Fourier} 65(1):1--62, 2015.
 

% % \bibitem{Turchin2}
% % G.~{Arone} and V.~{Turchin}.
% % \newblock {On the rational homology of high dimensional analogues of spaces of
% %   long knots}.
% % \newblock {\em Geom. Topol.}, 18:1261--1322, 2014.

%\bibitem{AsadaKaneko}
%Mamoru Asada and Masanobu Kaneko.
%\newblock On the automorphism group of some pro-{$l$} fundamental groups.
%\newblock Galois representations and arithmetic algebraic geometry ({K}yoto, 1985/{T}okyo, 1986), Adv. Stud. Pure Math. 12, 137--159, North-Holland, Amsterdam, 1987.


% % \bibitem{barnatan}
% % Dror Bar-Natan.
% % \newblock On the Vassiliev knot invariants.
% % \newblock \emph{Topology}, 34(2):423--472, 1995.

% % \bibitem{barnatanmk}
% % Dror Bar-Natan and Brendan McKay.
% % \newblock Graph Cohomology - An Overview and Some Computations.
% % \newblock unpublished, available at \url{http://www.math.toronto.edu/~drorbn/papers/GCOC/GCOC.ps}.
%
%\bibitem{BerglundKoszulSpaces}
%Alexander Berglund.
%\newblock Koszul spaces.
%\newblock Trans. Amer. Math. Soc. 366 (2014), no. 9, 4551--4569.
%
%\bibitem{BerglundBorjeson}
%Alexander Berglund and Kaj B\"orjeson.
%\newblock Free loop space homology of highly connected manifolds.
%\newblock Forum Math. 29 (2017), no. 1, 201--228.
%
%\bibitem{BerglundMadsen}
%Alexander Berglund and Ib Madsen.
%\newblock Homological stability of diffeomorphism groups.
%\newblock Pure Appl. Math. Q. 9 (2013), no. 1, 1--48.
%
%\bibitem{BerglundMadsen2}
%Alexander Berglund and Ib Madsen,
%\newblock Rational homotopy theory of automorphisms of manifolds.
%\newblock Acta Math. 224 (2020), no. 1, 67--185.
%

\bibitem{BM}
Marcel B\"okstedt and Erica Minuz.
\newblock Graph cohomologies and rational homotopy type of configuration spaces.
\newblock Preprint, arXiv:1904.01452, 2019.

%
%\bibitem{BorelBook}
%Armand Borel.
%\newblock Linear algebraic groups.
%\newblock Second edition. Graduate Texts in Mathematics, 126. Springer-Verlag, New York, 1991. xii+288 pp. ISBN: 0-387-97370-2.

% % \bibitem{brown}
% % Francis Brown.
% % \newblock Mixed {T}ate motives over {$\mathbb Z$}.
% % \newblock {\em Ann. of Math. (2)}, 175(2):949--976, 2012.

\bibitem{CamposWillwacher}
Ricardo Campos and Thomas Willwacher.
\newblock A model for configuration spaces of points.
\newblock preprint arXiv:1604.02043.


% \bibitem{CGP1}
% Melody Chan, Soren Galatius and Sam Payne.
% \newblock Tropical curves, graph homology, and top weight cohomology of $M_g$,
% \newblock arxiv:1805.10186, 2018.

%\bibitem{CGP2}
%Melody Chan, Soren Galatius and Sam Payne.
%\newblock Topology of moduli spaces of tropical curves with marked points
%\newblock arxiv:1903.07187, 2019.

% % \bibitem{CCTW}
% % Jim Conant, Jean Costello, Victor Turchin and Patrick Weed.
% % \newblock Two-loop part of the rational homotopy of spaces of long embeddings.
% % \newblock {\em J. Knot Theory Ramifications} 23 (2014), no. 4, 1450018, 23 pp. 

% % \bibitem{CV}
% % James Conant, Ferenc Gerlits and Karen Vogtmann.
% % \newblock Cut vertices in commutative graphs.
% % \newblock {\em Q. J. Math.} 56(3):321--336 (2005).

% % \bibitem{vastwisting}
% % Vasily Dolgushev and Thomas Willwacher.
% % \newblock Operadic Twisting -- with an application to Deligne's conjecture.
% % \newblock {\em J. Pure Appl. Algebra} 219(5):1349--1428 (2015).

%\bibitem{Enriquez} Benjamin Enriquez.
%\newblock Elliptic associators.
%\newblock Selecta Math. (N.S.) 20.2 (2014), pp. 491--584.
%
%\bibitem{FultonHarris} William Fulton, Joe Harris.
%\newblock Representation theory.
%A first course. 
%\newblock Graduate Texts in Mathematics, 129. Readings in Mathematics. Springer-Verlag, New York, 1991. xvi+551 pp. ISBN: 0-387-97527-6; 0-387-97495-4 
%
%\bibitem{Gatsinzi} J.-B. Gatsinzi.
%\newblock The homotopy Lie algebra of classifying spaces
%\newblock J. Pure Appl. Algebra 120 (1987), no. 3, 281--289

% \bibitem{GK} E. Getzler and M. Kapranov.
% \newblock Modular operads.
% \newblock Compositio Math. 110 (1998), no. 1, 65--126. 

% \bibitem{GNPR}
% F. Guill\'en Santos, V. Navarro, P. Pascual, and A. Roig.
% \newblock Moduli spaces and formal operads. 
% \newblock Duke Math. J. 129 (2005), no. 2, 291--335. 

\bibitem{Matteo}
Matteo Felder.
\newblock Graph complexes and higher genus Grothendieck-Teichmüller Lie algebras.
\newblock Preprint, arXiv:2105.02056, 2021.

\bibitem{FNW}
Matteo Felder, Florian Naef and Thomas Willwacher.
\newblock Stable cohomology of graph complexes.
\newblock Preprint, arXiv:2106.12826, 2021.

% \bibitem{FTW}
% Benoit Fresse, Victor Turchin and Thomas Willwacher.
% \newblock The rational homotopy of mapping spaces of $E_n$ operads
% \newblock Preprint, arXiv:1703.06123, 2017.

\bibitem{FultonHarris}
William Fulton and Joe Harris.
\newblock  Representation theory.
A first course. 
\newblock Graduate Texts in Mathematics, 129. Readings in Mathematics. Springer-Verlag, New York, 1991. xvi+551 pp. ISBN: 0-387-97527-6; 0-387-97495-4 

%\bibitem{GinzburgKapranov}
%V. Ginzburg and M. Kapranov.
%\newblock Koszul  duality  for  operads.
%\newblock Duke Math. J. 76 (1994), no. 1, 203--272.
%
%\bibitem{Hain} Richard Hain. 
%\newblock Infinitesimal presentations of the Torelli groups.
%\newblock J. Amer. Math. Soc.10.3(1997), pp. 597--651.
%
%\bibitem{Hain2} Richard Hain. 
%\newblock Genus 3 mapping class groups are not K\"ahler. 
%\newblock J. Topol. 8 (2015), no. 1, 213--246.
%
%\bibitem{idrissi}
%Najib Idrissi.
%\newblock The Lambrechts-Stanley model of configuration spaces.
%\newblock Invent. Math. 216 (2019), no. 1, 1–68.
%% % \bibitem{kneissler}
% % Jan Kneissler.
% % \newblock The number of primitive Vassiliev invariants up to degree 12.
% % \newblock arXiv:q-alg/9706022

% % \bibitem{kneissler2}
% % Jan Kneissler.
% % \newblock On spaces of connected graphs II: Relations in the algebra $\Lambda$. 
% % \newblock \emph{Knots in Hellas '98, Vol. 3 (Delphi)}.
% % \newblock \emph{ J. Knot Theory Ramifications}, 10(5):667--674, 2001.

% \bibitem{KWZ}
% Anton Khoroshkin, Thomas Willwacher and Marko \v Zivkovi\'c.
% \newblock Differentials on graph complexes.
% \newblock  {\em Adv. Math.} 307 (2017), 1184--1214.

% \bibitem{KWZ2}
% Anton Khoroshkin, Thomas Willwacher and Marko \v Zivkovi\'c.
% \newblock Differentials on graph complexes II: hairy graphs.
% \newblock {\em Lett. Math. Phys.} 107 (2017), no. 10, 1781–1797.
%
%\bibitem{KRWTor}
%Alexander Kupers and Oscar Randal-Williams.
%\newblock On the cohomology of Torelli groups.
%\newblock preprint arXiv:1901.01862, 2019.
%
%\bibitem{KRW}
%Alexander Kupers and Oscar Randal-Williams.
%\newblock On diffeomorphisms of even-dimensional discs.
%\newblock preprint arXiv:2007.13884, 2020.
%
%\bibitem{KRWnew}
%Alexander Kupers and Oscar Randal-Williams.
%\newblock On the Torelli Lie algebra.
%\newblock preprint arXiv:2106.16010, 2021.


\bibitem{K2}
Maxim Kontsevich.
\newblock Operads and {M}otives in {D}eformation {Q}uantization.
\newblock {\em Lett. Math. Phys.} 48 (1999), 35--72.

% \bibitem{K3}
% Maxim Kontsevich.
% \newblock Formal (non)commutative symplectic geometry. 
% \newblock The Gelfand Mathematical Seminars, 1990--1992, 173--187, Birkhäuser Boston, Boston, MA, 1993. 

% \bibitem{LambrechtsTurchin}
% Pascal Lambrechts and Victor Turchin.
% \newblock Homotopy graph-complex for configuration and knot spaces.
% \newblock {\em Trans. Amer. Math. Soc.}, 361(1):207--222, 2009.

% % \bibitem{LV}
% % Pascal Lambrechts and Ismar Voli\'c.
% % \newblock  Formality of the little $N$-disks operad. 
% % \newblock {\em Mem. Amer. Math. Soc. } 
% % \newblock 230 (2014), no. 1079, viii+116 pp. ISBN: 978-0-8218-9212-1 .   

\bibitem{Loday}
Jean-Louis Loday.
\newblock Cyclic homology.
\newblock Grundlehren der Mathematischen Wissenschaften, 301. Springer-Verlag, Berlin, 1992. ISBN: 3-540-53339-7.
%
%\bibitem{lodayval}
%J.-L. Loday and B. Vallette.
%\newblock Algebraic operads.
%\newblock {\em Grundlehren Math. Wiss.}, 346, Springer, Heidelberg, 2012. 


% \bibitem{MV-properad} S. Merkulov and B. Vallette, 
% \newblock  Deformation theory of representations of prop(erad)s,
% \newblock  J. Reine Angew. Math. {\bf 634} (2009), 51--106. and J. Reine Angew. Math. {\bf 636} (2009), 123--174,
% arXiv:0707.0889.

% \bibitem{MW}
% Sergei Merkulov and Thomas Willwacher.
% \newblock Props of ribbon graphs, involutive Lie bialgebras and moduli spaces of curves.
% \newblock arxiv:1511.07808, 2015.

% \bibitem{MWDef}
% Sergei Merkulov and Thomas Willwacher.
% \newblock Deformation Theory of Lie Bialgebra Properads.
% \newblock in \emph{Geometry and Physics: Volume I: A Festschrift in honour of Nigel Hitchin}, 2018, DOI: DOI:10.1093/oso/9780198802013.003.0010.

%\bibitem{Milles}
%Joan Mill\`es.
%\newblock The Koszul complex is the cotangent complex.
%\newblock Int. Math. Res. Not. IMRN 2012, no. 3, 607--650.
%
%\bibitem{Milne}
%J. S. Milne.
%\newblock Algebraic groups.
%The theory of group schemes of finite type over a field. 
%\newblock Cambridge Studies in Advanced Mathematics, 170. Cambridge University Press, Cambridge, 2017. xvi+644 pp. ISBN: 978-1-107-16748-3 
%
%\bibitem{NaefQin}
%Florian Naef and Yuting Qin.
%\newblock The Elliptic Kashiwara-Vergne Lie algebra in low weights.
%\newblock J. Lie theory 31 (2021), no. 2, p. 583--598. 
%
%\bibitem{Neisendorfer}
%Joseph Neisendorfer.
%\newblock The rational homotopy groups of complete intersections.
%\newblock Illinois J. Math. 23 (1979), no. 2, 175--182.

% % \bibitem{carlothomas}
% % Carlo Rossi and Thomas Willwacher.
% % \newblock P.~{E}tingof's conjecture about {D}rinfeld associators.
% % \newblock  arXiv:1404.2047 (2014).

% \bibitem{penner}
% R. C. Penner.
% \newblock Perturbative series and the moduli space of Riemann surfaces.
% \newblock 
% J. Differential Geom. 27 (1988), no. 1, 35--53.

%\bibitem{Procesi}
%C. Procesi.
%\newblock The invariant theory of $n\times n$ matrices.
%\newblock Advances in Math. 19 (1976), no. 3, 306--381. 

\bibitem{SW}
Pavol \v Severa and Thomas Willwacher.
\newblock Equivalence of formalities of the little discs operad.
\newblock {\em Duke Math. J.} 160(1):175--206, 2011.

%\bibitem{SchnepsRaphael}
%Elise Raphael and Leila Schneps.
%\newblock On linearised and elliptic versions of the Kashiwara-Vergne Lie algebra.
%\newblock Preprint arXiv:1706.08299, 2017.

% \bibitem{ST}
% Paul Arnaud Songhafouo Tsopm\'en\'e and Victor Turchin
% \newblock Hodge decomposition in the rational homology and homotopy of high dimensional string links. 
% \newblock arXiv:1504.00896 (2015).

% \bibitem{ST2}
% Paul Arnaud Songhafouo Tsopm\'en\'e and Victor Turchin
% \newblock Euler characteristics for the Hodge splitting in the rational homology and homotopy of high dimensional string links.
% \newblock arXiv:1609.00778 (2016).

% \bibitem{Tur1}
% Victor Turchin.
% \newblock Hodge-type decomposition in the homology of long knots.
% \newblock {\em J. Topol.}, 3(3):487--534, 2010.

% \bibitem{TW}
% Victor Turchin and Thomas Willwacher.
% \newblock Relative (non-)formality of the little cubes operads and the algebraic Cerf lemma.
% \newblock arXiv:1409.0163, 2014.

% \bibitem{TW2}
% Victor Turchin and Thomas Willwacher.
% \newblock Commutative hairy graphs and representations of $\mathit{Out}(F_r)$.
% \newblock arxiv:1603.08855, to appear in J. Top.

%\bibitem{valHTHA} Bruno Vallette.
%\newblock Homotopy theory of homotopy algebras.
%\newblock Ann. Inst. Fourier (Grenoble) 70 (2020), no. 2, 683--738. 

% % \bibitem{vogel}
% % Pierre Vogel.
% % \newblock Algebraic structures on modules of diagrams.
% % \newblock \emph{J. Pure Appl. Algebra}, 215(6): 1292--1339, 2011. 

% % \bibitem{Weibel}
% % Charles~A. Weibel.
% % \newblock {\em An introduction to Homological Algebra}.
% % \newblock Cambridge University Press, Cambridge, 1994.
%
%\bibitem{Willgrt}
%Thomas Willwacher.
%\newblock {M. Kontsevich's graph complex and the Grothendieck-Teichm\"uller Lie
%  algebra}.
%\newblock \emph{Invent. Math.}, 200(3): 671--760 (2015).
%
%\bibitem{WillDegree}
%Thomas Willwacher.
%\newblock On truncated mapping spaces of $E_n$ operads.
%\newblock arXiv:1911.00709

% \bibitem{Woriented}
% Thomas Willwacher 
% \newblock The Oriented Graph Complexes.
% \newblock Communications in Mathematical Physics volume 334, pages 1649--1666(2015)

% % \bibitem{eulerchar}
% % Thomas Willwacher and Marko \v{Z}ivkovi\'c.
% % \newblock Multiple edges in M. Kontsevich's graph complexes and computations of the dimensions and Euler characteristics.
% % \newblock {\em Adv. Math.} 272 (2015), 553--578.

% \bibitem{MarkoThesis}
% Marko \v Zivkovi\'c.
% \newblock Graph complexes and their cohomology.
% \newblock {\em Doctoral Thesis, University of Zurich}, 2016.

% \bibitem{MarkoPaper}
% Marko \v Zivkovi\'c.
% \newblock Differentials on Graph Complexes III - Deleting a Vertex.
% \newblock Lett. Math. Phys. 109 (2019), no. 4, 975–1054.

% \bibitem{MultiOriented}
% \v Zivkovi\' c, M.
% \newblock Multi-directed graph complexes and quasi-isomorphisms between them I: oriented graphs.
% \newblock High. Struct. 4(1):266–283, 2020.

% \bibitem{MultiSourced}
% \v Zivkovi\' c, M.
% \newblock Multi-directed Graph Complexes and Quasi-isomorphisms Between Them II: Sourced Graphs.
% \newblock Int. Math. Res. Not. IMRN (2019), rnz212.

\end{thebibliography}
\end{document}